\newtheorem{proposition}{Proposition}[section]
\newtheorem{lemma}{Lemma}[section]
\newtheorem{theorem}{Theorem}[section]
\theoremstyle{definition}
\newtheorem{definition}{Definition}[section]
\newtheorem{remark}{Remark}[section]
\newtheorem{example}{Example}[section]
\newtheorem*{solution*}{Solution}
\DeclareMathOperator{\Mat}{Mat}
\DeclareMathOperator{\ran}{ran}
 \newcommand{\bbar}[1]{\setbox0=\hbox{$#1$}\dimen0=.2\ht0 \kern\dimen0 \overline{\kern-\dimen0 #1}}
 \DeclareMathOperator{\End}{\ensuremath{\mathcal{E}\kern-.125em\mathpzc{nd}}}
 \DeclareMathOperator{\Hom}{\mathcal{H}\kern-.125em\mathpzc{om}}
 \DeclareMathOperator{\id}{id}
 \DeclareMathOperator{\im}{im}
 \renewcommand{\setminus}{\smallsetminus}
  \DeclareMathOperator{\ind}{ind}
  \DeclareMathOperator{\sym}{Sym}
 \newcommand{\udot}{\ensuremath{{\lower .183333em \hbox{\LARGE \kern -.05em$\cdot$}}}}
 \DeclareMathOperator{\Hess}	{Hess} 
 \DeclareMathOperator{\Sym}	{Sym}
    \DeclareMathOperator{\rank}	{rank}  
        \DeclareMathOperator{\codim}	{codim} 
 \newcommand{\cL}{\mathcal{L}}
 \newcommand{\cM}{\mathcal{M}}
 \newcommand{\cU}{\mathcal{U}} 
 \newcommand{\cV}{\mathcal{V}}
 \renewcommand{\L}{\mathbb{L}}
 \newcommand{\N}{\mathbb{N}}
 \newcommand{\R}{\mathbb{R}}
  \newcommand{\Z}{\mathbb{Z}}
 \newcommand{\p}{\partial}
 \DeclareMathOperator{\spn}{span}
 \DeclareMathOperator{\Mi}{Mi}
 \DeclareMathOperator{\Li}{Li}
 \DeclareMathOperator{\Ki}{Ki}
 \DeclareMathOperator{\sign}{sign}
\newcommand{\footremember}[2]{%
    \footnote{#2}
    \newcounter{#1}
    \setcounter{#1}{\value{footnote}}%
}
\title{Jacobi Fields in Optimal Control: Morse and Maslov Indices}
\author{%
  Andrei Agrachev\footremember{alley}{Scuola Internazionale Superiore di Studi Avanzati, Trieste, Italy, agrachevaa@gmail.com}%
  \and Ivan Beschastnyi\footremember{trailer}{Scuola Internazionale Superiore di Studi Avanzati, Trieste, Italy, i.beschastnyi@gmail.com}%
  }
\date{}
\begin{document}

\maketitle 
 
\begin{abstract}In this paper we discuss a general framework based on symplectic geometry for the study of second order conditions in constrained variational problems on curves. Using the notion of $L$-derivatives we construct Jacobi curves, which represent a generalization of Jacobi fields from the classical calculus of variations, but which also works for non-smooth extremals. This construction includes in particular the previously known constructions for specific types of extremals. We state and prove Morse-type theorems that connect the negative inertia index of the Hessian of the problem to some symplectic invariants of Jacobi curves.
\end{abstract}

\section*{Introduction}
\label{sec:motiv}
Consider a standard variational problem of the form
\begin{equation}
\label{eq:lagr_exam}
J[q(t)] = \int_0^T L(q,\dot q)dt \to \min, \qquad q\in \R^n,
\end{equation}
with fixed boundary conditions $q(0) = q_0$, $q(T) = q_T$. We assume that our Lagrangian is sufficiently smooth and that the strong Legendre condition holds
$$
L_{\dot{q}\dot{q}} > 0.
$$
We also use the usual convention of summation over the repeating indices whenever it is possible. Given a map $F: H_1 \to H_2$ between two Banach manifolds we denote by $dF[u](v)$ the differential of $F$ at a point $u\in H_1$ applied to a vector $v\in T_u H_1$. Sometimes, we will use a shorter notation $d_u F$ for brevity.

If $\tilde{q}(t)$ is a minimum, then it must be a critical point of $J$, i.e.
$$
d J[\tilde q(t)](\phi(t)) =0, \qquad \forall \phi\in C^2 \,:\,\phi(0) = \phi(T) = 0.
$$
Equivalently one can say that $\tilde{q}(t)$ satisfies the Euler-Lagrange equation which can be written in a Hamiltonian form as
\begin{align}
\dot q^i &= \frac{\p H}{\p p_i},\nonumber \\
\dot p_i &= -\frac{\p H}{\p q^i}\label{eq:ham_example}
\end{align}
with a Hamiltonian
$$
H =  p_i \dot{q}^i - L(q,\dot{q}), \qquad p_i = L_{\dot{q}^i}.
$$
The Legendre condition ensures that we can resolve at least locally $p_i = L_{\dot{q}^i}$ with respect to $\dot{q}$ and obtain a Hamiltonian $H(p,q)$ that depends only on the phase variables.

After that, one usually proceeds to studying the necessary second order conditions like $d^2 J[\tilde q(t)]\geq 0$. But one can ask a more general question of calculating the \textit{negative inertia index} $\ind^- d^2 J[\tilde q(t)]$. It can be done via the notion of conjugate points due to Jacobi. Namely we linearize system (\ref{eq:ham_example}) to get the Jacobi equation
\begin{align}
\dot x^i &= \frac{\p^2 H}{\p p_i \p q^j}x^j + \frac{\p^2 H}{\p p_i \p  p_j}y_j\nonumber, \\
\dot y_i &= -\frac{\p^2 H}{\p q^i \p q^j} x^j -\frac{\p^2 H}{\p q^i \p p_j} y_j\label{eq:jac_example}.
\end{align}
A moment of time $t_{c}$ is called \emph{conjugate} if there exists a non-trivial solution of the Jacobi equation (\ref{eq:jac_example}) with boundary conditions $x(0) = 0$, $x(t_{c}) = 0 $. The corresponding point $\tilde q(t_{c})$ of our extremal is called a \emph{conjugate point} and the number of linearly independent solutions of the Jacobi boundary value problem is called the \emph{multiplicity} of $\tilde q(t_{c})$. 

A famous theorem due to Morse states
\begin{theorem}[Morse]
The index $\ind^- d^2 J[\tilde q(t)]$ is equal to the number of conjugate points counted with multiplicities.
\end{theorem}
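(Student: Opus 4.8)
The plan is to realize the second variation as the \emph{index form}
$$
Q_T(\phi) = d^2 J[\tilde q](\phi), \qquad \phi \in H^1_0([0,T],\R^n),
$$
and to control how its negative index grows with the length of the interval. Writing out $d^2 J$ and integrating by parts, the leading term is $\int_0^T L_{\dot q\dot q}\dot\phi\cdot\dot\phi\,dt$, which by the strong Legendre condition is coercive on $H^1_0$, while the remaining terms are of lower order. Hence $Q_T$ is a Legendre form, its negative index $\ind^- Q_T$ is finite, and its kernel consists exactly of the Jacobi fields (solutions of (\ref{eq:jac_example})) that vanish at both endpoints. For $s\in(0,T]$ I would introduce the restricted forms $Q_s$ on $H^1_0([0,s])$ and, after rescaling each interval to $[0,1]$ so that the $Q_s$ become a continuous family of Legendre forms on a fixed space, study the step function $n(s):=\ind^- Q_s$, whose value $n(T)$ is the quantity to be computed.

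First I would record the elementary monotonicity: extending any $\phi\in H^1_0([0,s])$ by zero to $[0,s']$ with $s'>s$ preserves the value of the form, so a negative-definite subspace for $Q_s$ yields one of the same dimension for $Q_{s'}$, and $n$ is nondecreasing. For small $s$ the Legendre term dominates (Poincar\'e on a short interval beats the lower-order terms), so $Q_s>0$ and $n(s)=0$, i.e.\ there are no conjugate points near $0$. By the definition in the excerpt, $Q_s$ is degenerate precisely when $s$ is conjugate, and $\dim\ker Q_s$ equals the multiplicity. The crucial structural input is that conjugate points are \emph{isolated}, hence finite in number: this follows from the monotonicity of the curve of Lagrangian subspaces of solutions of (\ref{eq:jac_example}), which is forced by the sign of $L_{\dot q\dot q}$ and makes the intersections with the vertical subspace $\{x=0\}$ isolated. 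Consequently $n$ is a step function with finitely many jumps, located at the conjugate points.

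The heart of the matter is to show that the jump of $n$ at each conjugate point $t_c$ equals its multiplicity $d=\dim\ker Q_{t_c}$. That $n$ increases by at most $d$ across $t_c$ follows from monotonicity together with the upper semicontinuity of $s\mapsto \ind^- Q_s + \dim\ker Q_s$ for the continuous family $Q_s$, combined with the fact that nearby values of $s$ are non-conjugate (the same facts give $n(t_c)=n(t_c^-)$). For the lower bound I would produce $d$ independent directions on which $Q_{t_c+\e}$ is negative. Pick a basis $\eta_1,\dots,\eta_d$ of $\ker Q_{t_c}$; each $\eta_j$ is a Jacobi field with $\eta_j(0)=\eta_j(t_c)=0$, and $\dot\eta_j(t_c)\neq 0$ since $H_{pp}=(L_{\dot q\dot q})^{-1}$ is invertible. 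Extending $\eta_j$ by zero to $[0,t_c+\e]$ gives an $H^1_0$ function with $Q_{t_c+\e}$-value zero but with a genuine corner at $t_c$; smoothing this corner in the direction of the jump of $\dot\eta_j$ strictly decreases the form, because the Legendre term penalizes the kink at first order while the lower-order terms enter only at second order. This produces $d$ vectors on which $Q_{t_c+\e}<0$, independent because the $\eta_j$ are. I expect this \textbf{crossing computation} — verifying that the corner-smoothing variation is strictly negative, i.e.\ that the index form crosses zero transversally at each conjugate point — to be the main obstacle, since it is exactly what upgrades the soft monotonicity into an exact count.

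Finally I would assemble the pieces. Since $n$ is nondecreasing, vanishes near $0$, takes its left-hand limit at each conjugate point, and jumps by precisely the multiplicity there, summing the jumps over the finitely many isolated conjugate points $t_c\in(0,T)$ gives
$$
\ind^- d^2 J[\tilde q(t)] = n(T) = \sum_{0<t_c<T}\dim\ker Q_{t_c},
$$
which is the number of conjugate points counted with multiplicities. The only remaining point is the endpoint convention, namely that a point conjugate exactly at $s=t_c$ contributes only after it has been crossed; this is precisely the content of $n(t_c)=n(t_c^-)$ established via the transversality of the crossing.
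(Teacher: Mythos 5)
Your outline is correct, and it is the classical analytic proof — which is a genuinely different route from the one this statement takes in the paper, where the Morse theorem is stated as background and then recovered as the regular-extremal special case of the general $\cL$-derivative machinery. Where you track the step function $n(s)=\ind^- Q_s$ directly — monotonicity by zero-extension, positivity on short intervals, isolatedness of conjugate points from monotonicity of the Jacobi curve, and the corner-smoothing argument showing the jump at $t_c$ equals $\dim\ker Q_{t_c}$ — the paper computes the same count as a symplectic invariant of the Jacobi curve $L_t=\Phi_t(\Pi)$: Theorem~\ref{thm:morse_apr} expresses the index over piecewise-constant variations as a sum of positive Maslov indices $\ind_\Pi(\cL_i,\cL_{i+1})$ via the linear-algebra Lemma~\ref{lem:important} and the algorithm of Theorem~\ref{thm:algorithm}, and Theorem~\ref{thm:main_index_result} passes to the limit using the Leray index. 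The step you rightly flag as the crux — the transversal-crossing/corner-smoothing computation — is where your approach consumes the strong Legendre condition (as it also does for isolatedness and for the Legendre-form semicontinuity); it does go through by the standard boundary-term calculation: integrating by parts against the Jacobi field gives $Q(\eta^{\mathrm{ext}},w)=-\dot\eta(t_c)^T L_{\dot q\dot q}\,w(t_c)$, which can be given a strict sign since $\dot\eta(t_c)\neq 0$, and you should additionally record the Gram-matrix/scaling argument (cross terms between the old negative subspace and the $d$ smoothed fields vanish at order zero and are $O(\delta)$, against diagonal blocks of size $-c$ and $-c'\delta$) to see the directions stay jointly negative. What your route buys is an elementary, self-contained proof for regular extremals; what the paper's route buys is that the crossing computation is replaced by homotopy-invariant bookkeeping with the positive Maslov, Kashiwara and Leray indices, which survives precisely where your steps have no analogue — abnormal, singular or bang-bang extremals, where $Q$ is not a Legendre form and the Jacobi curve need not even be continuous.
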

Due to the strong Legendre condition conjugate points can not accumulate and the index must be finite.

We can give a more geometric interpretation of conjugate points using symplectic geometry. Consider the standard symplectic form on $\R^{2n}$
$$
\sigma\left( (x_1,y_1),(x_2,y_2) \right) = \begin{pmatrix}
x_1^T & y_1^T
\end{pmatrix}\begin{pmatrix}
0 & \id \\
-\id & 0 
\end{pmatrix}
\begin{pmatrix}
x_2 \\
y_2
\end{pmatrix}
$$
A subspace $L \in \R^{2n}$ is called \emph{isotropic} if the restriction $\sigma|_L$ is zero. A \emph{Lagrangian} subspace or a "Lagrangian plane" $L$ is the maximal isotropic subspace, which means that $\sigma|_L = 0$ and $\dim L = n$. For example, it is easy to see that the vertical subspace $\Pi = \{(0,y)\in \R^{2n}\}$ is Lagrangian. The set of Lagrangian planes is called the \emph{Lagrangian Grassmanian}.

Notice that the Jacobi equation (\ref{eq:jac_example}) is a Hamiltonian system on $\R^{2n}$. So its flow $\Phi_t$ is a linear symplectic transformation and hence it maps Lagrangian planes to Lagrangian planes. Thus by fixing an initial Lagrangian plane $L_0 = \Pi$ we get a curve $L_t = \Phi_t (\Pi)$ in the Lagrangian Grassmanian that is known as the \emph{Jacobi curve}. Then a moment of time $t$ is conjugate if, and only if, $L_t \cap \Pi \neq \{0\}$ and the multiplicity of the corresponding conjugate point is given by $\dim (L_t \cap \Pi)$.

The set of all Lagrangian planes that have a non zero intersection with a fixed Lagrangian plane $\Pi$ is called the \emph{Maslov train} $\cM_\Pi$. We have a conjugate point whenever our curve $L_t$ crosses it. The set $\cM_\Pi$ is an algebraic hypersurface in the Lagrange Grassmanian with codimension three singularities and a coorientation. Therefore there is a well defined intersection index with curves, which is called the \emph{Maslov index}. We will give precise definitions later, see also~\cite{gosson,arnoldgivental,sternberg}. Thus we can reformulate the Morse index theorem as follows.
\begin{theorem}
If $q(T)$ is not a conjugate point, then $\ind^- d^2 J[\tilde q(t)]$ is equal to the Maslov index of the curve $L_t |_{[\varepsilon,T]}$ for some $\varepsilon$ small enough.
\end{theorem}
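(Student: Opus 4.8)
The plan is to deduce the statement from the classical Morse index theorem (Theorem~\ref{sec:motiv} above, the first displayed theorem), which we take as given. By that theorem $\ind^- d^2 J[\tilde q(t)]$ equals the number of conjugate points in $(0,T)$ counted with multiplicities, and by the dictionary established in the Introduction a conjugate time $t$ is exactly a time at which $L_t\cap\Pi\neq\{0\}$, with multiplicity $\dim(L_t\cap\Pi)$. Hence it suffices to show that the Maslov index of $L_t|_{[\e,T]}$ equals the total multiplicity of the intersections $L_t\cap\Pi$ over $t\in(0,T)$. First I would recall the working definition of the Maslov index as a signed count of crossings of the curve with the train $\cM_\Pi$, where each crossing at a time $t$ is weighted according to the coorientation of $\cM_\Pi$, i.e. by the signature of a suitable \emph{crossing form} on $L_t\cap\Pi$.

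The decisive step is to establish the \emph{monotonicity} of the Jacobi curve. Identifying the tangent space to the Lagrangian Grassmannian at a plane $L$ with the space of quadratic forms on $L$, the velocity of $L_t$ is the form $Q_t(\ell)=\sigma(\ell,\dot\ell)$, well defined on $L_t$ independently of the extension of $\ell$ to a curve $\ell(s)\in L_s$. Writing the Jacobi equation (\ref{eq:jac_example}) as $\dot z = J\,\mathcal H\,z$, with $J$ the symplectic matrix and the symmetric Hessian $\mathcal H=\begin{pmatrix} H_{qq} & H_{qp} \\ H_{pq} & H_{pp}\end{pmatrix}$, a direct computation gives $Q_t(\ell)=\sigma(\ell,J\mathcal H\ell)=-\ell^T\mathcal H\,\ell$. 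When $\ell=(0,y)\in L_t\cap\Pi$ this reduces to $Q_t(\ell)=-y^T H_{pp}\,y$, which is sign-definite by the strong Legendre condition $H_{pp}=(L_{\dot q\dot q})^{-1}>0$. Thus at \emph{every} conjugate time the crossing form is definite, so every crossing of $\cM_\Pi$ is nondegenerate and carries one and the same sign.

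Because all crossings share the same sign, no cancellation occurs and the Maslov index becomes a genuine count: the contribution of a conjugate time $t$ is exactly the rank $\dim(L_t\cap\Pi)$ of the definite crossing form, that is, the multiplicity of the conjugate point. Summing over the interval, the Maslov index of $L_t|_{[\e,T]}$ equals the total multiplicity of the conjugate points lying in it. It then remains to explain the role of $\e$. Since $L_0=\Pi\in\cM_\Pi$, the curve starts on the train and the Maslov index of $L_t|_{[0,T]}$ is not defined; by the strong Legendre condition conjugate points do not accumulate, so for $\e>0$ small enough $(0,\e]$ contains none and $L_\e\notin\cM_\Pi$. Hence the crossings inside $[\e,T]$ account for all conjugate points of $(0,T)$ (recall $q(T)$ is assumed not conjugate), and combining with the Morse index theorem yields the claim.

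The main obstacle I expect is the bookkeeping at the endpoints together with the precise comparison of the coorientation of $\cM_\Pi$ with the sign of the crossing form: one must verify that the orientation convention making all crossings count positively is consistent with the chosen definition of the Maslov index, and that the index is additive under subdivision and independent of the admissible choice of $\e$. A conjugate point of multiplicity greater than one also demands care, as the crossing is then nontransversal; the clean resolution is the crossing-form description, in the spirit of Robbin--Salamon, in which a definite crossing contributes its full rank regardless of transversality, matching exactly the Jacobi multiplicity $\dim(L_t\cap\Pi)$.
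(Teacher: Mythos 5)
Your argument is correct in outline, but be aware that the paper contains no proof of this statement at all: it is quoted in the introduction as the classical Morse--Maslov theorem, with pointers to~\cite{gosson,arnoldgivental,sternberg} and~\cite{duistermaat}, and the paper's actual results run in the opposite logical direction. You \emph{deduce} the Maslov-index formulation \emph{from} the classical Morse index theorem via monotonicity of the Jacobi curve: your computation $\sigma(\ell,\dot\ell)=-\ell^T\cH\ell$, reducing on $L_t\cap\Pi$ to $-y^TH_{pp}y$, is correct, and definiteness under the strong Legendre condition indeed makes all crossings isolated, of a single sign, each contributing its full rank $\dim(L_t\cap\Pi)$ in the Robbin--Salamon crossing-form sense; this also justifies the choice of $\varepsilon$, since definiteness of the crossing form at $t=0$, where $L_0=\Pi$, forces $L_t\cap\Pi=\{0\}$ for all small $t>0$. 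This is the standard route (essentially Duistermaat's), and it buys a short proof in the smooth, unconstrained, strongly Legendre setting --- at the price of assuming the classical theorem and the $C^1$-regularity of $L_t$. The paper instead proves Morse-type statements directly from the Hessian: Theorem~\ref{thm:morse_apr} computes $\ind^-\Hess$ over piecewise-constant variations as a sum of positive Maslov indices of $\cL$-derivative approximations (via the algorithm of Theorem~\ref{thm:algorithm} and the additivity Lemma~\ref{lemm:add}), and Theorem~\ref{thm:main_index_result} passes to the limit using the Leray index. That machinery yields the introductory theorem as a special case without ever invoking the classical result, and it survives exactly where your crossing form does not exist: constrained controls, abnormal multipliers, and discontinuous Jacobi curves (bang-bang, Fuller).

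The one point you flagged but must actually close is the coorientation. With the conventions literally stated in the paper --- $\sigma((x_1,y_1),(x_2,y_2))=x_1^Ty_2-y_1^Tx_2$, the identification $\dot\Lambda(\lambda)=\sigma(\lambda,\dot\lambda)$, and positive crossings when $\dot\Lambda(0)(\lambda)>0$ --- your crossing form is \emph{negative} definite, so the intersection index of $L_t|_{[\varepsilon,T]}$ would come out as \emph{minus} the number of conjugate points. The stated equality therefore requires the opposite coorientation of $\cM^1_\Pi$ (equivalently, the identification $\dot\Lambda(\lambda)=\sigma(\dot\lambda,\lambda)$); this is precisely the sign-convention issue the paper itself acknowledges at the beginning of Section~\ref{sec:symp_and_morse}, where the conventions are declared so that Jacobi curves of minimum problems are monotone \emph{increasing}. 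Pin that down explicitly and your proof is complete as a derivation from the classical theorem; it is just not the mechanism by which the paper obtains such results.
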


So we see that a study of the functional $J[q]$ on a infinite dimensional space can be reduced to the study of behaviour of some curves in finite dimensional spaces. In the case of the first variation to the study of the curves defined as solutions of the Hamiltonian system (\ref{eq:ham_example}) and in the case of the second variation to the study of the Jacobi curve $L_t$. Moreover the Maslov index turns out to be a very flexible tool for computing the index of Hessians in variational problems (see for example~\cite{duistermaat,sachkov_elastica}), due to its homotopy invariance.

A natural question is whether or not this theory generalizes to constrained variational problems, like the problems of optimal control. In the case of the first variation the answer was given by Pontryagin and his students, who showed, that under very weak conditions minimal curves satisfy a Hamiltonian system with a Hamiltonian defined by a maximum condition, that is now widely known as the Pontryagin's maximum principle (PMP)~\cite{pmp}. 

To motivate such a generalization let us look at some possible applications of this theory to engineering and pure mathematics. First of all, using those techniques it is possible to answer certain stability questions. Indeed, many physical problems are formulated using variational principles. For example, given an elastic rod, what shapes can it take? One knows that those curves must be local minimizers of the bending energy and the overall problem can be formalized as a constrained variational problem~\cite{sachkov_elastica}. The PMP allows us to characterize critical curves, but most of them are not going to give a local minima of the functional. This means that even using infinitesimal perturbation of a given solution, one can produce curves with a smaller value of the bending energy. Hence such solutions are not stable. Studying the second variation allows us separate stable solutions from the unstable ones. For an example of this approach see~\cite{elastic}.

Another closely related application is motion planning. Given a control system one wants to transfer it from one state to another. Often there is an infinite number of ways of doing this (think of parking a car) and simple algorithms that allow to associate to two states a control function that steers the system between those two states are strongly desirable. One way to approach this problem is to look for minimum curves of some simple functional. Usually simplicity in this case means that the functional and the system has as many symmetries a possible which simplify its study. Then one can apply numerical algorithms or try to solve the problem by hand. Usually the last one is too difficult and a mixture of both is applied. Nevertheless with each new optimality condition we improve the speed of the convergence of the numerical solution, since we restrict to a much smaller set of candidates. This way we can obtain a better initial guess or even obtain a finite number of possibilities that can be checked by a computer. A classical example of this approach is the Reeds-Shepp car. Reeds and Shepp took the simplest model for a mobile robot and studied its time-optimal solutions~\cite{car}. They very able to reduce the number of candidate trajectories to a finite number of configurations. This number was later improved by Sussman and Tang using the technique of generalized envelopes~\cite{sussmann_car}. There is a deep connection between this technique and the theory we develop here. All optimality conditions from~\cite{sussmann_car} can be derived using our methods.

The second variation has also a purely geometric application. We can consider a variational problem on the space of curves of a differentiable manifold. For example, if the minimized functional is the length functional of a Riemannian metric, the theory of second variation allows to arrive naturally at the notion of sectional curvature. This approach was successfully applied in sub-Riemannian geometry, where curvature invariants that are directly related to the behavior of the geodesic flow were found~\cite{ABB}. It turns out that good models for constant curvature spaces are linear quadratic problems that are not sub-Riemannian manifolds, but which constitute a very natural class of optimal control problems. Using this class of systems in~\cite{davide_luca} the authors were able to prove comparison theorems, and one can hope that a similar theory can be developed for much more general constrained variational problems on manifolds.

Finally, many applications of PMP have produced various examples of bad behaving minimizers, which do not need to be smooth, which can accumulate an infinite number of discontinuities in finite time~\cite{zelikin} or even be chaotic~\cite{zelikin2}. Thus it may not be even possible to write down a Jacobi equation like in the situation above. Nevertheless surprisingly it was proven in~\cite{agr_lderiv_1,agr_lderiv} that there exists a Jacobi curve $L_t$ in the Lagrangian Grassmanian and a Maslov type theory that allows to codify all the information about the second variation. The basis of this general theory is the notion of $\cL$-derivatives. The goal of this paper is to explain this theory, to demonstrate some old and new ways to compute $\cL$-derivatives, how to construct the corresponding Jacobi curves and how to extract from them information about the Hessian of the functional along an extremal curve. We note that the notion of a conjugate point and Morse index theorems in optimal control were previously known for some specific cases of extremals like regular, singular or bang-bang (see, for example,~\cite{bonnard,goc,arona,dmitruk} and references there in). In this work we develop a general unified framework based on symplectic geometry, that can be applied to all the previously known cases and beyond.

This work is one of the many generalisations of the Morse index formula. There exists an infinite-dimensional version for elliptic boundary value problems introduced by Swanson~\cite{swanson} and further developed in~\cite{jones1,jones2,jones3}, a version for strongly-indefinite functionals (like Lorentzian geodesic problem) both in finite~\cite{portaluri,Waterstraat2,piccione} and infinite~\cite{Waterstraat1} dimensions. We should also mention various infinite-dimensional variants of Morse homology~\cite{smale,abb_maj} that greatly generalize the content and applicability of the Morse theorem we stated in the Introduction. Our main contribution is that we generalize the theory in the direction of much less regular variational problems as it is required by the applications that we have in mind. In particular, the extremals can have only $C^0$-regularity.

The article is organized as follows. In Section~\ref{sec:def} we discuss the definition of $\cL$-derivatives and Jacobi curves -- the main tool necessary for establishing the Morse type theorems at this level of generality. We introduce the optimal control setting, which is a natural language for any type of constrained variational problems on curves. In Subsection~\ref{sec:alg} we construct an algorithm for evaluating approximations of Jacobi curves with arbitrary good precision. This is a crucial step in the prove of Morse theorems. In Subsection~\ref{sec:sympl_def} we give a fairly detailed exposition of the Maslov index theory, including a special variant of the Maslov index of a triple of Lagrangian planes particularly suitable to our problem. In Subsection~\ref{sec:sympl_morse} we prove two Morse-type/spectral flow theorems: a simpler one that allows to compute the Morse index of the Hessian restricted to piecewise constant variations and a limiting Morse theorem, when the interval of constancy of variations goes to zero. At the end of the article the reader can find two appendices. In Appendix~\ref{app:chrono} we derives the necessary formulas for the Hessian. In Appendix~\ref{app:existence} we prove and state several auxiliary lemmas used in Section~\ref{sec:symp_and_morse} We hope that the tools that we develop in this article will find further applications in symplectic geometry and spectral flow theorems.

\section{$\cL$-derivatives for constrained variational problems}
\label{sec:l-deriv}
\subsection{General definitions}
\label{sec:def}

A $\cL$-derivative is a rule that for a given constrained variational problem assigns to an admissible space of variations a Lagrangian plane in some symplectic space. As we add variations we can compare the relative positions of the corresponding $\cL$-derivatives and deduce from that how the inertia indices and nullity of the Hessian change as we consider bigger and bigger spaces of variations. Using this theory one can recover the classical theory of Jacobi and much more. 

We consider the following setting. Let $J: \cU \to \R$ be a functional and $F: \cU \to M$ be a map with smooth finite-dimensional restrictions, where $\cU$ is a smooth Banach manifold and $M$ is a finite-dimensional manifold. Given a point $q\in M$, we are interested in finding $\tilde{\omega} \in F^{-1}(q)$ that minimize $J$ among all other points $\omega\in F^{-1}(q) $. In the case of optimal control problems $\cU$ is the space of admissible controls. The map $F$ is usually taken to be the end-point map, which we will introduce in the next subsection. 

The first step is to apply the Lagrange multiplier rule that says that if $\tilde{\omega}$ is a minimal point then there exists a covector $\lambda \in T_q^* M$ and a number $\nu\in\{0,1\}$, s.t. 
\begin{equation}
\label{eqlagrange}
\langle \lambda, dF[\tilde{\omega}](w) \rangle - \nu dJ[\tilde{\omega}](w) = 0,\qquad \forall w\in T_{\tilde{\omega}}\cU. 
\end{equation}

\begin{definition}
A pair $(\tilde{\omega},\lambda)$ that satisfies equation~\eqref{eqlagrange} is called a \textit{Lagrangian point} and $\tilde{\omega}$ is called a \textit{critical point} of $(F,J)$. If $\nu =0$ we say that the critical point is \textit{abnormal}, and if $\nu=1$ we call it \textit{normal}.
\end{definition}
There are of course many critical points that are not minimal. So in order to find the minimal ones we have to apply high order conditions for minimality. For example, we can study the \textit{Hessian} of a pair $(J,F)$ at a Lagrangian point $(\tilde{\omega},\lambda)$ defined as
\begin{equation}
\label{eqhessian}
\Hess (F,\nu J)[\tilde{\omega}, \lambda] := \left( \nu d^2 J[\tilde{\omega}] - \langle \lambda, d^2 F[\tilde{\omega}] \rangle \right)|_{\ker dF[\tilde{\omega}]}.
\end{equation}
The index and the nullity of the Hessian are directly related to optimality of the critical point $\tilde{\omega}$. Namely we have 
\begin{theorem}[\cite{as},Theorem 20.3]
Let $(F,J): \cU \to M \times \R$ be a pair of smooth maps and $\tilde{\omega}$ be a critical point with parameter $\nu$. Then if for any Lagrangian point $(\tilde{\omega},\lambda)$
$$
\ind^- \Hess (F,\nu J)[\tilde{\omega}, \lambda]  \geq \codim \im d(F,J)[\tilde \omega],
$$
the critical point $\tilde{\omega}$ is not optimal.
\end{theorem}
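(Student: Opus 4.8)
The plan is to deduce non-optimality from a local openness property of the joint map $\Phi := (F,J)\colon \cU \to M\times\R$ at $\tilde\omega$. Write $c_0 := J(\tilde\omega)$ and $k := \codim \im d(F,J)[\tilde\omega]$. Observe first that the Lagrangian points over $\tilde\omega$ are exactly the lines of covectors annihilating $\im d\Phi[\tilde\omega]$: a pair $(\lambda,\nu)$ satisfies \eqref{eqlagrange} iff $\langle(\lambda,-\nu),d\Phi[\tilde\omega](\cdot)\rangle=0$, and this annihilator has dimension $k$ in $(T_qM\times\R)^*$. The convention $\nu\in\{0,1\}$ simply fixes a representative on each such line, choosing the sign that corresponds to decreasing $J$. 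If I can show that the image of every neighborhood of $\tilde\omega$ under $\Phi$ contains a point $(q,c_0-\delta)$ with $\delta>0$, then this point is an admissible competitor in $F^{-1}(q)$ with strictly smaller cost, so $\tilde\omega$ is not optimal.

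First I would pass to a finite-dimensional model. Using that $F$ and $J$ have smooth finite-dimensional restrictions, I choose a finite-dimensional subspace $E\subset T_{\tilde\omega}\cU$ large enough that $d\Phi[\tilde\omega]|_E$ has the same image as $d\Phi[\tilde\omega]$, and such that $E\cap\ker dF[\tilde\omega]$ contains, for each relevant multiplier, a subspace of dimension $\ge k$ on which the Hessian is negative definite; it then suffices to prove openness for the restriction of $\Phi$ to a smooth finite-dimensional submanifold tangent to $E$. Fixing a splitting $T_qM\times\R=\im d\Phi[\tilde\omega]\oplus C$ with $\dim C=k$, the first-order surjectivity onto $\im d\Phi[\tilde\omega]$ lets me apply the implicit function theorem to solve away that component. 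What remains is a smooth germ $\Psi\colon(\ker dF[\tilde\omega],0)\to(C,0)$ with $d\Psi(0)=0$ whose intrinsic second differential is the vector-valued quadratic map $Q$ characterized by $\langle\mu,Q\rangle=\Hess(F,\nu J)[\tilde\omega,\lambda]$ for the multiplier $\mu=(\lambda,-\nu)$ associated to the splitting. In the normal case $dJ$ vanishes on $\ker dF[\tilde\omega]$, so $\ker dF[\tilde\omega]=\ker d\Phi[\tilde\omega]$ and this identification is clean; the abnormal case is handled on $\ker dF[\tilde\omega]$ directly.

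The heart of the argument is a topological covering lemma for quadratic maps: if $\ind^-\langle\mu,Q\rangle\ge k$ for every nonzero multiplier direction $\mu\in C^*$, then $0$ lies in the interior of the image cone $Q(\ker dF[\tilde\omega])\subset C$, equivalently $Q$ is locally open at $0$. The hypothesis of the theorem supplies exactly this inequality on the multiplier directions that matter. To upgrade the statement from the homogeneous model $Q$ to the genuine germ $\Psi$, I would rescale: writing $\Psi(\varepsilon v)=\varepsilon^2\big(Q(v)+o(1)\big)$ as $\varepsilon\to0$ and using a uniform lower bound for $Q$ on a sphere together with a Brouwer degree computation, the nonvanishing degree of the map induced by $Q$ on the boundary sphere persists for $\Psi$ at small scale, so $\Psi$ covers a full neighborhood of $0$ in $C$. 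Tracing back through the implicit function step and the finite-dimensional reduction, $\Phi$ covers a neighborhood of $(q,c_0)$, which contains the desired competitor $(q,c_0-\delta)$.

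The step I expect to be the main obstacle is the quadratic covering lemma together with the remainder control: proving that $\ind^-\ge k$ for all multiplier directions forces $0$ into the interior of the image cone (the genuinely topological input, relying on Brouwer degree and a minimax argument over the compact sphere of multipliers), and then checking that the cubic-and-higher remainder of $\Psi$ cannot destroy the covering. Subsidiary care is needed with signs: the convention $\nu\in\{0,1\}$ must be the one selecting the cost-\emph{decreasing} half, so that one-sided covering already yields a lower-cost point even when full openness is unavailable. One must also verify that the finite-dimensional reduction preserves both the image of the differential and the negative index of the Hessian uniformly in the multiplier.
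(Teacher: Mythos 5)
The paper does not prove this theorem --- it is quoted from~\cite{as} --- so your proposal can only be measured against the argument of the cited source, which it does reconstruct in outline: finite-dimensional reduction, elimination of the surjective part of $d(F,J)[\tilde\omega]$ by the implicit function theorem, and a topological covering property of the induced $C$-valued quadratic map $Q$ on $\ker dF[\tilde\omega]$. The genuine gap is in your key lemma. You state it two-sidedly --- $\ind^-\langle\mu,Q\rangle\ge k$ for \emph{every} nonzero $\mu\in C^*$ implies that $0$ is interior to the image --- and assert that the theorem's hypothesis ``supplies exactly this inequality''. It does not. Replacing $\mu$ by $-\mu$ swaps $\ind^-$ with $\ind^+$, and the hypothesis, quantified over Lagrangian points with the normalization $\nu\in\{0,1\}$, constrains only a closed half-space of annihilator directions: abnormal multipliers come in $\pm$ pairs, but for a normal multiplier the opposite direction carries $\ind^+\Hess(F,\nu J)[\tilde\omega,\lambda]$, about which nothing is assumed. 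This is not cosmetic: at a nondegenerate local \emph{maximum} of $J$ on a fiber of a submersion $F$ (with $\dim\ker dF[\tilde\omega]\ge 1$) the hypothesis holds with $k=1$, yet $(F,J)$ is not locally open at $\tilde\omega$, since the values $(q,c_0+\delta)$ are missed; so no proof passing through two-sided openness can succeed. What the argument actually needs --- and where the quantifier over Lagrangian points is consumed --- is the half-space version: if $\ind^-\langle\mu,Q\rangle\ge k$ for all nonzero $\mu$ with $\langle\mu,e\rangle\le 0$, where $e=(0,-1)$ is the cost-decreasing direction, then the image of every neighborhood contains $te$ for all small $t>0$; equivalently one argues by contraposition, extracting from a missed downward ray a separating covector $\bar\mu$ with $\langle\bar\mu,e\rangle\le 0$ --- hence a legitimately normalized Lagrange multiplier --- with $\ind^-\langle\bar\mu,Q\rangle\le k-1$. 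Your closing caveat about ``selecting the cost-decreasing half'' shows you sensed the issue, but the one-sided lemma is a different statement with a different proof, and you never formulate it.

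A second, more technical flaw: the rescaling-plus-Brouwer-degree step resting on ``a uniform lower bound for $Q$ on a sphere'' cannot be carried out, because under your own index hypothesis every form $\langle\mu,Q\rangle$ is indefinite, so $Q$ has a nontrivial cone of zeros meeting every sphere, and $\Psi(\partial B_\rho)$ admits no uniform separation from the target. The mechanism in~\cite{as} is instead to produce a \emph{regular} point $v_0$, where $Q$ takes the required value and $d_{v_0}Q$ is surjective --- this is where compactness of the sphere of multipliers and lower semicontinuity of $\mu\mapsto\ind^-\langle\mu,Q\rangle$ enter, points you correctly flag in connection with the uniform finite-dimensional reduction --- and then to apply the implicit function theorem to $\Psi$ along the rescaled family $\varepsilon v_0$, where the differential has a right inverse of norm of order $\varepsilon^{-1}$, which beats the $o(\varepsilon^2)$ remainder and yields the competitor $(q,c_0-\delta)$ in the image. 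With the half-space covering lemma and the regular-point mechanism substituted for your two-sided lemma and boundary-degree step, the remainder of your outline (the reduction, the identification of the intrinsic second differential of the reduced germ with the Hessian, the sign bookkeeping) matches the cited proof.
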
 

In the normal case the Hessian of $(J,F)$ coincides with the second derivative of $J$ on the level set of $F^{-1}(q)$ at a Lagranian point $(\tilde{\omega},\lambda)$. Indeed, let $\omega(s)$ be a curve in $F^{-1}(q)$, s.t. $\omega(0) = \tilde{\omega}$. Then by differentiating twice $F(\omega(s)) = q$ at $s=0$ we find that
$$
dF[\tilde{\omega}](\dot{\omega})=0, 
$$
$$
d^2 F[\tilde{\omega}](\dot{\omega},\dot{\omega})+d F[\tilde{\omega}](\ddot{\omega})=0.
$$
Similarly we find that
\begin{align*}
\left.\frac{\p^2}{\p s^2}\right|_{s=0} J(\omega(s)) &= d^2 J[\tilde{\omega}](\dot{\omega},\dot{\omega})+d J[\tilde{\omega}](\ddot{\omega}) = d^2 J[\tilde{\omega}](\dot{\omega},\dot{\omega})+ \langle \lambda, dF[\tilde{\omega}](\ddot{\omega})\rangle =  \\
&= d^2J[\tilde{\omega}](\dot{\omega},\dot{\omega})-  \langle\lambda, dF[\tilde{\omega}](\dot{\omega},\dot{\omega})\rangle
\end{align*}
where in the second equality we have used that $(\tilde{\omega},\lambda)$ is a Lagrange point. From here we can see that this expression is equal exactly to $\Hess(F,\nu J)[\tilde{\omega},\lambda](\dot{\omega},\dot{\omega})$.

We are now ready to define $\cL$-derivatives. We linearise \eqref{eqlagrange} with respect to $\lambda$ and $\omega$, and obtain the following equation
$$
\langle \xi, dF[\tilde{\omega}](w) \rangle + \langle \lambda, d^2F[\tilde{\omega}](v,w) \rangle  - \nu d^2J[\tilde{\omega}](v,w)=0.
$$
Or if we define $Q(v,w) := \langle \lambda, d^2F[\tilde{\omega} ](v,w) \rangle - \nu d^2J[\tilde{\omega} ](v,w)$, we can rewrite this as
\begin{equation}
\label{eql_deriv_def}
\langle \xi, dF[\tilde{\omega}](w) \rangle + Q(v,w)=0.
\end{equation}
We note that $\Hess(F,\nu J)[\tilde{\omega},\lambda] = - Q|_{\ker dF[\tilde{\omega}]}$.

\begin{definition}
\label{def:l-deriv}
Let $(F,J)$ be maps with smooth finite-dimensional restrictions. A \textit{$\cL$-derivative} of $(F,J)$ at a Lagrangian point $(\tilde{\omega},\lambda)$ constructed over a finite-dimensional space of variations $V \subset T_{\tilde{\omega}} \cU$ that we denote as $\cL(F,\nu J)[\tilde{\omega},\lambda](V)$ is the set of vectors $(\xi,dF[\tilde{\omega}](v))\in T_{\lambda}(T^*M)$, s.t. $(\xi,v)\in (T_{\lambda}(T^*_{q}M), V)$ solve~\eqref{eql_deriv_def} for all $w\in V$.
\end{definition}

\begin{remark}
Let us give an informal explanation and motivation for this definition. Since $F: \cU \to M$ is a smooth map between two smooth manifolds, we can define the pull-back bundle $F^*(T^*M)$. All Lagrangian points satisfy equation~\eqref{eqlagrange}, and form a subset $\L \subset F^* (T^*M)$. Assume that this set is a manifold (this can indeed happen~\cite[Section 8.4.1]{ABB}). Then equation~\eqref{eqhessian} is just the characterization of the tangent space $T_{(\tilde{\omega},\lambda)}\L$ and $\cL(F,\nu J)[\tilde{\omega},\lambda](V)$ is the image of a subspace of $T_{(\tilde{\omega},\lambda)}\L$ under the differential of $F$. From here the second variation theory for smooth regular extremals can be established~\cite{ABB}. The goal of the rest of the subsection is to establish the basis for the theory even when $\L$ is not a smooth manifold.
\end{remark}

$\cL(F,\nu J)[\tilde{\omega},\lambda](V)$ is a Lagrangian plane if $V$ is finite-dimensional~\cite{agrachev_cime}. The reason why we do not take directly $T_{\tilde{\omega}} \cU$ instead of $V$ is that it is a linear equation defined on an infinite-dimensional space and it might be ill-posed. In this case $\cL(F,\nu J)[\tilde{\omega},\lambda](V)$ is just isotropic and can even be zero-dimensional~\cite{agrachev_moi}. But if we have chosen the right topology for our space of variations, we are going to get exactly $\dim M$ independent solutions. 

Lagrangian subspace $\cL (F,\nu J)[\tilde \omega,\lambda](V)$ contains information about the second variation restricted to the subspace $V$. To obtain a Lagrangian subspace that encodes the information about all the possible variations one has to use generalized sequences. For reader's convenience we recall some definitions. 

\begin{definition}
\textit{A directed set} $(I,\preccurlyeq)$ is a set $I$ with a preorder $\preccurlyeq$, s.t. for any two elements $\alpha,\beta \in I$ there exists an element $\gamma$, s.t. $\alpha \preccurlyeq \gamma$ and $\beta \preccurlyeq \gamma$.
\end{definition}
\begin{definition}
Given a directed set $(I,\preccurlyeq)$ a \emph{generalized sequence} or a net is a function from the set of indeces $I$ to a topological space $X$. A generalized sequence $\{x_\alpha\}_{\alpha \in I} \in X$ \emph{converges to a limit} $x\in X$, if for any open neighbourhood $O_x \ni x$ there exists an element $\beta \in I $, s.t. for all $\alpha \succcurlyeq \beta$ one has $x_\alpha \in O_x$.
\end{definition}

Finite dimensional subspaces of $T_{\tilde \omega} \cU$ form a directed set with a partial ordering given by the inclusion $V\subset W$. This motivates the following definition
\begin{definition}
A \emph{$\cL$-derivative} of $(F,J)$ at a Lagrange point $(\tilde \omega,\lambda)$ constructed over a subspace $V\subset T_u \cU$ is the generalized limit
$$
\cL (F,\nu J)[\tilde \omega,\lambda](V) = \lim_{W\nnearrow V}\cL (F,\nu J)[\tilde \omega,\lambda](W).
$$
taken over increasing finite-dimensional subspaces $W \subset V$.
\end{definition}
When $V$ is the whole space of available variations, we simply write $\cL(F,\nu J)[\tilde{\omega},\lambda]$ for the corresponding $\cL$-derivative. 

We have the following important theorem proved in~\cite{agr_lderiv}, that ensures the existence of this limit and gives a way to compute it.
\begin{theorem}
\label{thm:main}
Let $(\tilde{\omega},\lambda)$ be a Lagrangian point of $(F,J)$. 
\begin{enumerate}
\item If either the positive or the negative inertia index of $\Hess (F,\nu J)[\tilde \omega,\lambda]$ is finite, then $\cL(F,\nu J)[\tilde{\omega},\lambda]$ exists;
\item $\cL(F,\nu J)[\tilde{\omega},\lambda] = \cL(F,\nu J)[\tilde{\omega},\lambda](V)$ for any $V$ dense in $T_{\tilde{\omega}}\cU$.
\end{enumerate}
\end{theorem}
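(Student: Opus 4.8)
The plan is to work entirely inside the fixed finite-dimensional symplectic space $\Sigma = T_\lambda(T^*M)$, whose dimension $2n$ (with $n=\dim M$) does not depend on the chosen space of variations $W$. Since for every finite-dimensional $W$ the set $\cL(F,\nu J)[\tilde\omega,\lambda](W)$ is already known to be a Lagrangian plane, the whole net $\{\cL(F,\nu J)[\tilde\omega,\lambda](W)\}_W$ lives in the Lagrangian Grassmannian $\Lambda(n)$ of $\Sigma$, which is compact. Compactness alone produces accumulation points of the net, so the entire content of part 1 is to upgrade ``accumulation point'' to ``unique limit''. I would do this by showing the net is monotone and then bounding its total motion.

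The key structural input I would establish first is that the assignment $W\mapsto \cL(F,\nu J)[\tilde\omega,\lambda](W)$ is \emph{monotone} with respect to the natural partial order on Lagrangian planes attached to the reference plane $\Pi$ (the analogue of the vertical, whose train $\cM_\Pi$ detects $\cL\cap\Pi\neq\{0\}$). Reading planes transverse to a fixed complement as graphs of symmetric $n\times n$ forms, I would show that enlarging $W$ to $W'\supset W$ moves the associated form in one definite direction. This is the plane-level shadow of an elementary fact: the pair $(\xi,dF[\tilde\omega](v))$ solving \eqref{eql_deriv_def} is exactly the Lagrange-multiplier data of the constrained critical-point problem for $Q$ on $W$ under the constraint $dF[\tilde\omega]|_W$, and passing to $W'$ only relaxes the constraints, so the reduced form changes monotonically. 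I expect this monotonicity statement, correctly formulated, to be the technical heart of the argument.

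Given monotonicity, I would use the finiteness of (say) $\ind^-\Hess(F,\nu J)[\tilde\omega,\lambda]$ to bound the total rotation of the net in $\Lambda(n)$. Because the motion is monotone, its crossings of any fixed train $\cM_{\Pi'}$ all carry the same sign, and their number is dominated by the finite inertia index; hence for a suitably chosen complementary plane $\Pi'$ the net crosses $\cM_{\Pi'}$ only finitely often, after which it is confined to the affine chart of planes transverse to $\Pi'$. In that chart it becomes a monotone net of symmetric $n\times n$ matrices which, being unable to cross $\cM_{\Pi'}$ again, stays bounded on the monotone side; pointwise convergence of the monotone bounded quadratic forms then determines a limiting matrix, hence a unique limiting plane. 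Equivalently, one may split off the finite-dimensional subspace on which $Q|_{\ker dF[\tilde\omega]}$ is not of the definite sign and treat it directly, the complementary definite part contributing a bounded monotone tail. Either way this yields existence and uniqueness of the limit, proving part 1 (the case of finite $\ind^+$ being identical after reversing the monotone direction).

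For part 2 I would argue that, once the limit is known to exist, the value $\cL(F,\nu J)[\tilde\omega,\lambda](W)$ depends only on the restrictions of $Q$ and $dF[\tilde\omega]$ to $W$ and varies continuously with the finite-dimensional subspace $W$. If $V$ is dense in $T_{\tilde\omega}\cU$, then any finite-dimensional subspace can be approximated by finite-dimensional subspaces of $V$, so the monotone net restricted to $W\subset V$ is cofinally comparable with the full net; continuity together with the convergence from part 1 forces the two nets to share the same limit. The main obstacle I anticipate lies in the second and third steps: pinning down the monotonicity in the correct partial order, and making precise the bookkeeping that ties the number of train crossings of the net to the finite inertia index, all while respecting that the index set here is a directed set of subspaces rather than a sequence.
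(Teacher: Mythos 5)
Your plan for part 1 is essentially the paper's own argument, but the monotonicity on which you hang everything is not true for the raw net: enlarging $W$ does \emph{not} ``only relax the constraints'' in \eqref{eql_deriv_def} (it adds equations as well as unknowns), and for the indefinite form $Q$ the associated plane can jump through the train non-monotonically whenever the rank of $dF[\tilde\omega]|_W$ or the kernel of $Q|_{W\cap\ker dF[\tilde\omega]}$ changes. The paper's device is to kill these degeneracies once and for all: Lemma~\ref{lem:subspace} produces a single finite-dimensional saturated $V$ past which rank, kernel and positive index are all stable; Theorem~\ref{thm:index_add} (whose hypotheses are exactly this stabilization) then gives $\ind_\Pi(\cL(V_1),\cL(V_2))=0$ for all $V\subset V_1\subset V_2$; the triangle inequality for the positive Maslov index pins a \emph{single} plane $\Delta$ transversal to every $\cL(V_i)$ simultaneously, and the Kashiwara cocycle identity converts $\ind_\Pi=0$ into $\ind_\Delta=0$, i.e.\ into the chart-level monotonicity $S_1\leq S_2$ of bounded (definite) symmetric matrices. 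This replaces your ``count the same-signed crossings of a fixed train'' step, which is delicate to even formulate for a net indexed by a directed set of subspaces; your fallback remark about splitting off the non-definite part is precisely Lemma~\ref{lem:subspace}, so with that substitution part 1 goes through as in the paper.

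Part 2 contains a genuine gap: the asserted continuity of $W\mapsto\cL(F,\nu J)[\tilde\omega,\lambda](W)$ in the subspace $W$ is false in general. The $\cL$-derivative is the image of the solution space of the linear system \eqref{eql_deriv_def}, and solution planes of linear systems do not depend continuously on the data --- the plane jumps exactly where the upstairs kernel changes dimension, which is the same mechanism that makes Jacobi curves discontinuous (e.g.\ in the bang-bang case, as the paper itself notes). Even at points of continuity you would need uniformity along the net to interchange the limit over $W'\subset V$ approximating $W$ with the limit over growing $W$, and nothing in your argument supplies it. The paper's actual proof of part 2 is a contradiction argument of a different nature: assuming $\cL(U_0)\neq\cL$, after reducing by $\Gamma=\cL(U_0)\cap\cL$ one finds a transversal $\Delta\in\Pi^\pitchfork$ with $\ind_\Delta(\cL(U_0),\cL)\neq 0$, realizes $\Delta=T_\lambda(da)$ for a smooth function $a$, chooses $V_0\subset U_0$ realizing the maximal index of $\Hess(a\circ F)$ --- this is the only place density is used, and it works because the \emph{quadratic form}, not the plane-valued map, is continuous --- and then the auxiliary Morse-type Lemma~\ref{lem:maslov_add_2} forces $\ind_\Delta(\cL(V_0),\cL(V_0+V))=0$, contradicting $\ind_\Delta(\cL(U_0),\cL)\neq 0$ via the local constancy of the index on strata (Proposition~\ref{prop:homotopy}). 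You would need to replace your continuity step by an argument of this kind.
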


The second point is especially important, since it actually allows to compute the $\cL$-derivative. For example, if $\cU$ is modelled over a separable Hilbert space we can take a dense subspace spanned by vectors $e_1,e_2,...$ and define subspaces $V_i = \spn\{e_1,...,e_i\}$, $i\in \N$. Then we can compute the $\cL$-derivative as a limit
$$
\cL(F,\nu J)[\tilde{\omega},\lambda] = \lim_{i\to \infty}\cL(F,\nu J)[\tilde{\omega},\lambda](V_i)
$$
and there is no need for a generalized sequence in this case. We give the proof of this Theorem in Appendix~\ref{app:existence}, which extends the proof of the first author from~\cite{agr_lderiv_1} in greater detail than in the original article.

\begin{remark}
\label{rem:overdoing}
Another simple but important property is the following. Suppose that we have a smooth map $G: \cV \to \cU$ which is a submersion. And let $\tilde{v}$ be any preimage of a critical point $\tilde{\omega}$ under $G$. Then it is easy to check that $(\tilde{v},\lambda)$ is a Lagrange point of $(F\circ G, J\circ G)$ and that
$$
\cL(F,\nu J)[\tilde{\omega},\lambda] = \cL(F \circ G,\nu J\circ G)[\tilde{v},\lambda]. 
$$  
This property will be essential in the study of problems with constrains in the control.
\end{remark}

$\cL$-derivatives contain information about the inertia indices and nullity of the Hessian \eqref{eqhessian} restricted to some space of variations. The strength of this technique lies in the fact, that by comparing relative position of two $\cL$-derivatives constructed over two subspaces $V\subset W$, we can see how the inertia indices of the Hessian change as we add variations to our variations space. We have gathered some simple facts of this kind in Appendix~\ref{app:existence}.

\subsection{Optimal control problems with no constraints on the control}
\label{sec:optim}

Let us consider the following optimal control problem 
\begin{equation}
\label{eq:cont1}
\dot{q} = f(u,q), \qquad u \in U \subset \R^k, \qquad q \in M,
\end{equation}
\begin{equation}
\label{eq:func1}
J_T(u) = \int_0^{T} L(u,q) dt \to \min.
\end{equation}
We assume that we look for a minimum control in $L^\infty_k[0,T]$, that $f(u,q)$ and $L(u,q)$ are smooth in both variables. Moreover we assume that the final time $T$ is fixed and that $q(0)$ can be free and lie in some submanifold $N_0\subset M$, and $q(T) = q_T$ is fixed. We note that if $q(0)$ is fixed and $q(T)$ is not, then we transform our problem to a moving starting point by  making a change of time variable $t\mapsto T - t$. For now we simply assume that $U = \R^k$. We will see in the next subsection how the general case can be reduced to this one.

\begin{definition}
\textit{An admissible pair} $\omega(t)$ is a pair $(q(t),u(t))$ which consists of an absolutely continuous trajectory $q(t)$ that satisfies (\ref{eq:cont1}) for almost every $t$ and the corresponding control $u(t)$. We denote the set of admissible pairs by $\Omega$. 
\end{definition}

In~\cite{agrachev_cime} the following result was proven
\begin{proposition}
If $\dim M = n$, then the set of admissible pairs $\Omega$ has a structure of a smooth Banach manifold modelled over $\R^n \times L^\infty_k[0,T]$.
\end{proposition}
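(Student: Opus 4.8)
The plan is to parametrize $\Omega$ by the pair (initial point, control) and to transport the obvious Banach manifold structure along this parametrization. Concretely, I would consider the map $\Psi\colon (q_0,u)\mapsto\bigl(q(\,\cdot\,;q_0,u),u\bigr)$, where $q(\,\cdot\,;q_0,u)$ is the solution of the Cauchy problem $\dot q = f(u,q)$, $q(0)=q_0$. Since $f$ is smooth and $u\in L^\infty_k[0,T]$, the right-hand side $f(u(t),q)$ is measurable and essentially bounded in $t$ and $C^1$ (hence locally Lipschitz) in $q$, so Carathéodory's theorem gives a unique maximal solution for each datum $(q_0,u)$, absolutely continuous and in fact Lipschitz because $f$ is bounded on compacts. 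Let $\cD\subset M\times L^\infty_k[0,T]$ be the set of data whose maximal solution is defined on all of $[0,T]$. Every admissible pair arises in this way from its own initial value and control, and $(q_0,u)$ is recovered from the pair uniquely; thus $\Psi\colon\cD\to\Omega$ is a bijection. This reduces the proposition to showing that $\cD$ is open in $M\times L^\infty_k[0,T]$, because an open subset of the product $M\times L^\infty_k[0,T]$ is a smooth Banach manifold modelled on $\R^n\times L^\infty_k[0,T]$.

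The core of the argument is therefore the openness of $\cD$, which I expect to be the main obstacle and which amounts to the fact that global existence is an open condition. Given $(q_0,u)\in\cD$, the trajectory $\gamma=q([0,T])$ is compact in $M$; I would fix a relatively compact neighbourhood $W\supset\gamma$ and a radius $R>\|u\|_{L^\infty}$, on which $f$ together with its first derivatives is bounded, with a Lipschitz constant $L_f$ in $q$ (working in a finite atlas covering $\overline W$, or after a Whitney embedding of $M$ into some $\R^N$). For a datum $(q_0',u')$ with $\operatorname{dist}(q_0',q_0)$ and $\|u'-u\|_{L^\infty}$ small and $\|u'\|_{L^\infty}\le R$, a Gronwall estimate of the form
$$
\operatorname{dist}\bigl(q'(t),q(t)\bigr)\le\bigl(\operatorname{dist}(q_0',q_0)+T\,C\,\|u'-u\|_{L^\infty}\bigr)e^{L_f T}
$$
keeps $q'(t)$ inside $W$ for as long as it is defined; a standard continuation argument then forbids the solution from escaping $\overline W$ before time $T$, so it exists on all of $[0,T]$ and $(q_0',u')\in\cD$. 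This simultaneously proves that $\cD$ is open and that $\Psi^{-1}$ is continuous for the natural $C([0,T],M)\times L^\infty_k[0,T]$ topology on pairs, so that the transported structure agrees with the evident one.

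Finally, I would record the atlas explicitly to pin down the model space. Choosing an atlas $\{(O_\alpha,\phi_\alpha)\}$ of $M$ with $\phi_\alpha\colon O_\alpha\to\R^n$, the sets $U_\alpha=\{(q,u)\in\Omega:q(0)\in O_\alpha\}$ cover $\Omega$, and $\chi_\alpha(q,u)=(\phi_\alpha(q(0)),u)$ maps $U_\alpha$ bijectively onto an open subset of $\R^n\times L^\infty_k[0,T]$ (open because $\cD$ is). The transition map $\chi_\beta\circ\chi_\alpha^{-1}$ acts only on the first factor as $(\phi_\beta\circ\phi_\alpha^{-1})\times\id$, which is smooth; note that a chart remembers only $q(0)$, the rest of the trajectory being determined by the dynamics, so the ODE enters the construction solely through the openness of $\cD$ established above. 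Hence $\Omega$ carries a smooth Banach manifold structure modelled on $\R^n\times L^\infty_k[0,T]$, as claimed.
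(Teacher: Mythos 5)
Your proof is correct, and at its core it is the same construction the paper sketches: identify a neighbourhood of an admissible pair with (a chart on $M$ around a point of the trajectory) $\times$ (an $L^\infty$-ball around the control), using well-posedness of the Cauchy problem. The differences are worth recording. You anchor every chart at $\tau=0$, which turns the local statement into a global one: $\Omega$ is parametrized by the open set $\cD\subset M\times L^\infty_k[0,T]$ of initial data with global solutions, the transition maps act only on the $\R^n$ factor as $(\phi_\beta\circ\phi_\alpha^{-1})\times\id$ and are trivially smooth, and the only analytic input is the openness of $\cD$, which you actually prove (Gronwall plus continuation) whereas the paper omits the argument and defers to its reference. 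The paper's version instead allows the anchor time $\tau\in[0,T]$ to be arbitrary, and this flexibility is not cosmetic: it is exploited in Remark~\ref{rem:coordinates}, where $\tau$ is chosen so that $F_\tau$ becomes linear in the chart, $\ker dF_\tau$ is exactly $L^\infty_k[0,T]$, and $d^2F_\tau$ vanishes. To know that those $\tau$-anchored charts are smoothly compatible with your $\tau=0$ atlas one needs smooth (not merely continuous) dependence of the flow on the initial point and on the $L^\infty$ control, which your argument deliberately sidesteps; so you prove the proposition exactly as stated, but slightly less than the structure the paper later relies on. One small slip: your Gronwall estimate bounds $\operatorname{dist}(q'(t),q(t))$ in terms of the data, which is continuity of $\Psi$, not of $\Psi^{-1}$; the latter is evaluation at $t=0$ together with forgetting the trajectory and is trivially continuous in the $C([0,T],M)\times L^\infty_k[0,T]$ topology. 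Both directions do hold, so the claim that the transported structure agrees with the evident one survives, but the labels should be swapped.
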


We don't give here a complete proof of this result, but we explain how one can construct an open neighbourhood of some admissible pair $\tilde{\omega}(t) = (\tilde{q}(t),\tilde{u}(t))$. Fix a moment of time $\tau \in[0,T]$ and consider an open neighbourhood $U$ of $\tilde{q}(\tau) = \pi(\tilde{\omega}(\tau))$ which is diffeomorphic to an open set in $\R^n$. Then if we fix $u(t)$, s.t. $||u(t) - \tilde{u}(t)||_\infty < \varepsilon $, through each point ${q}\in U$ passes a unique solution of (\ref{eq:cont1}) at a moment of time $\tau$, because of the well-posedness of the Cauchy problem. So one can see that locally a neighbourhood of $\tilde{\omega}(t)$ is a product of a small open neighbourhood of $\tilde{q}(\tau) $ in $M$ and an open neighbourhood of $\tilde{u}$ in $L^\infty_k[0,T]$. 

In order to construct the $\cL$-derivative we must specify a map that corresponds to our constraint.
\begin{definition}
\emph{The evaluation map} $F_t: \Omega \to M$ is a map, that is defined as
$$
F_t(\omega) = \pi(\omega(t)) = q(t).
$$
\end{definition}
This map is actually smooth because of the smooth dependence on parameters of the solutions to the Cauchy problem. Also from the construction of a neighbourhood of $\omega \in \Omega$ it is easy to see that $F_t$ is a submersion. Moreover the classical end-point map can be characterized as
$$
E_T = F_T |_{F_0^{-1}(q_0)}, \qquad q_0 \in M.
$$ 
The end-point map is the basic object in the study of optimal control problems. It takes a control $u(t)$ and maps it to the end of the corresponding trajectory that begins at $q_0 \in M$. Similarly if $q_0$ is not fixed, but lies in a manifold $N_0$, we define an analog of the classical end-point map as
$$
E_{N_0,T} = F_T |_{F_0^{-1}(N_0)}.
$$ 
We can then proceed to the construction of the $\cL$-derivative of $(E_{N_0,T},J_T)$, that will contain information about the Hessian of this problem. 

In the most well studied case of regular extremals $\cL(E_{T},\nu J_T)[\tilde{\omega},\lambda(T)]$ is the set of values of all Jacobi fields at a moment of time $T$ that have a zero projection on $TM$ at time $0$. If we assume that $q_0$ is free, then we need to modify accordingly the boundary condition at zero, but the meaning of the $\cL$-derivative is essentially the same. By constructing these $\cL$-derivatives for all times $t\in[0,T]$, and not just for the final time $T$, we reconstruct the whole set of Jacobi fields.

\begin{remark}
\label{rem:coordinates}
It is important to note that the definition of the end-point map, evaluation map, admissible curves etc. are all intrinsic, i.e. they do not depend on the choice of coordinates in $\cU$ and $M$. This means that the $\cL$-derivatives that we will construct are intrinsic as well and thus we can exploit the local structure of the space of admissible curves to simplify explicit computations. Previously we have discussed that the space of admissible curves is locally equivalent to $ L_k^\infty[0,T] \times \R^n$, which simply means that we look for the solutions of  \eqref{eq:cont1} with some control $u(t)\in L_k^\infty[0,T]$ passing through a point $q(\tau)\in M$, $\tau \in [0,T]$. But we can choose this $\tau$ as we want, the corresponding $\cL$-derivative will be the same for all $\tau$. This simplifies many things. For example, the inclusion of the space of admissible curves defined on an interval $[0,t] \subset [0,T]$ into the space of admissible curves defined on $[0,T]$ is simply given by taking the controls from $L_k^\infty[0,t] \subset L_k^\infty[0,T]$. Or by identifying a neighbourhood of $q(\tau)$ with $\R^n$ we can find coordinates s.t. $F_\tau(q(\tau),u(t)) = q(\tau)$. This implies that in this coordinate chart kernel of the differential of $F_\tau$ is exactly $L_k^\infty[0,T]$ and that the second derivative is zero. Finally we note that the space of variations $F_0^{-1}(N_0)$ for the very same reasons locally can be identified with $ L_k^\infty[0,T] \times \R^{\dim N_0}$.
\end{remark}

Let $\Omega^T_{N_0} = F^{-1}_0(N_0)$ be the space of all solutions of~\eqref{eq:cont1} starting at $N_0$ up to time $T$. If $\tilde{\omega}$ is a critical point of the pair $(E_{N_0,T},J_T)$, then we have
\begin{equation}
\label{eq:first_var}
\langle \lambda(T), d E_{N_0,T}[\tilde{\omega}](w)\rangle - \nu dJ_T[\tilde{\omega}](w)= 0, \qquad \forall w\in  \Omega_{N_0}^T.
\end{equation}
Here $(\lambda(T) , -\nu) \in T^*_{E_{N_0,T}(\tilde{\omega})} \times \R$ are the Lagrange multipliers, where $\nu$ is normalized to take values $0$ or $1$, $\tilde \omega \in F_0^{-1}(N_0)$. If we introduce the \textit{extended end-point map} $\hat{E}_{N_0,t} = (E_{N_0,t},J_t)$, we can rewrite this equation as
\begin{equation}
\label{eq:var_ext_end}
\langle \hat{\lambda}(t),d\hat{E}_{N_0,t}[\tilde{\omega}](w)\rangle = 0, 
\end{equation}
where $\hat{\lambda}(t) = (\lambda(t),-\nu)$. The extended end-point map $\hat{E}_{N_0,t}$ can be seen as the end point-map of the following control system
$$
\begin{array}{l}
\dot{q} = f(u,q)\\
\dot{y} = L(u,q)
\end{array} \qquad \iff \qquad \dot{\hat{q}} = \hat{f}(u,\hat{q}), 
$$
where $\hat{q}=(q,y)$. Let us denote the flow of this system with $u=\tilde{u}$ from time $t_0$ till time $t_1$ by $\hat P_{t_0}^{t_1}$. We also write $\hat P^t$ for $\hat P_0^t$. We use the non-hatted notation $P^t$ for the flow of the original control system (\ref{eq:cont1}) with the same control.

Since $d\hat{E}_{N_0,T}[\tilde{\omega}] |_{\Omega_{N_0}^t \cap L_k^\infty[0,t]} = (\hat P^T_t)_* d\hat E_{N_0,t} [\tilde{\omega}]$, by restricting~\eqref{eq:first_var} to $L_k^\infty[0,t]$ we find that
\begin{equation}
\label{eq:first_variation}
\langle \hat \lambda(t), d \hat E_{N_0,t}[\tilde{\omega}](w)\rangle = \langle \lambda(t), d E_{N_0,t}[\tilde{\omega}](w)\rangle- \nu dJ_t[\tilde{\omega}](w) = 0, \qquad \forall w\in  T_{\tilde{\omega}} \Omega_{N_0}^T,
\end{equation}
where $\hat \lambda(t) = (\hat P^T_t)^*\hat \lambda(T)$ and $\lambda(t)$ is the projection of $\hat{\lambda}(t)$ to $T^*_{\tilde{q}(t)}M$. Note that in the first inequality we have used the fact that the differential flow $\hat{P}^T_t$ leaves the subspace $(0,-\nu)$ invariant, since $\dot{y}$ does not depend on $y$.

If we define a Hamiltonian 
$$
h(u,\lambda) = \langle \lambda, f(u,q) \rangle - \nu L(u,q)
$$ 
then one can show~\cite{as}, that $\lambda(t)$ satisfies the Hamiltonian system 
$$
\dot{\lambda}(t) = \vec{h}(\tilde{u},\lambda(t)). 
$$
Moreover if we restrict the equation (\ref{eq:first_variation}) to $w\in  L_k^\infty[0,t]$ then we get a condition of the form
\begin{equation}
\label{eq:first_order}
\left.\frac{\p h(u,\lambda(t))}{\p u}\right|_{u=\tilde{u}} = 0,
\end{equation}
which is a weak form of the maximum principle. If we restrict the equation (\ref{eq:first_variation}) to $w\in  T_{\tilde{q}(0)} N_0$ we obtain instead the transversality conditions
\begin{equation}
\label{eq:trans}
\lambda(0) \perp T_{\tilde{q}(0)}N_0.
\end{equation}
For completness we give a derivation of these two conditions in Appendix~\ref{app:chrono}.

Let us consider an example in order to establish some relevant models. 

\begin{example}
\label{ex:lqr}

Let us consider a basic example from optimal control theory. Suppose that we are given matrices $A\in \Mat(n\times n)$, $B \in \Mat(n\times k)$ and a symmetric matrix $Q\in \Mat(n\times n)$. We wish to minimize 
$$
J_T[u] = \frac{1}{2}\int_0^T (|u|^2 - q^T Q q) dt
$$
for constraint system
$$
\dot{q}=Aq+Bu,
$$
where $q\in\R^n$, $u\in \R^k$. This is the famous linear quadratic problem (LQR). Let us write down the Hamiltonian vector field for normal extremals. Since $T^* \R^n \simeq \R^n\times \R^n$. We write $\lambda=(q,p)$. The Hamiltonian function is then given by
\begin{equation}
\label{eq:lqr_hamiltonian}
h(u,\lambda) = p^TAx+p^TBu - \frac{|u|^2-q^T Qq }{2}. 
\end{equation}
Using the canonical symplectic form $\sigma = \sum_{i=1}^n dp_i \wedge dq^i$, we then find that
\begin{equation}
\label{eq:lqr_vect}
\vec{h}(u,\lambda) = \begin{pmatrix}
-A^T & -Q\\
0 & A
\end{pmatrix}
\begin{pmatrix}
p\\
q
\end{pmatrix}
+ \begin{pmatrix}
0\\
Bu
\end{pmatrix}.
\end{equation}
If we now apply the weak form of the PMP~\eqref{eq:first_order}, then we get $\tilde{u}= B^Tp$. Thus
$$
\vec{h}(\tilde u,\lambda) = 
\begin{pmatrix}
-A^T & -Q\\
BB^T & A
\end{pmatrix}\begin{pmatrix}
p\\
q
\end{pmatrix}
$$
\end{example}

\begin{definition}
\label{def:jacobi_curve}
A \emph{Jacobi curve} of an optimal control problem (\ref{eq:cont1})-(\ref{eq:func1}) is the family of time parametrized Lagrangian subspace $\cL(E_{N_0,t},\nu J_t)[\tilde{\omega},\lambda(t)]$.
\end{definition}

It is important to note that Jacobi curves are feedback invariant. Indeed, $\cL$-derivatives remain unchanged under diffeomorphism of $\Omega_{N_0}$ as discussed in Remark~\ref{rem:overdoing} and feedback transformations just constitute a special case.

Definition~\ref{def:jacobi_curve} is quite natural in the light of the previous discussion, but it also has a small problem, namely $\cL$-derivatives from the definition are Lagrangian subspaces in different symplectic spaces $T_{\lambda(t)}(T^*M)$, and if we want to study their relative positions, we need a way to identify them. In order to fix this problem, let us consider the Hamiltonian flow of $\vec{h}_{\tilde{u}(t)}$, that we denote as $\Phi_t$. We are going to compute the pull-back $\Phi^*_t \cL(E_{N_0,t},\nu J_t)[\tilde{\omega},\lambda(t)](V)$. For brevity we denote this Lagrange subspace by $\cL_t(V)$. We will see that the information about the Hessian is encoded in some symplectic invariants of the Jacobi curve. Since $\Phi^*_t$ is a symplectomorphism, all the results about $\cL_t(V)$ will transfer automatically to the original invariant curve and we also gain the advantage that $\cL_t(V)$ stays in a fixed symplectic space $T_{\lambda(0)}(T^*M)$.

The next step is to write down explicitly the equation that defines $\cL_t(V)$. We have seen that the first order conditions are equivalent to the maximum principle with transversality conditions. Thus in order to obtain an explicit form for the equations \eqref{eq:first_variation} it is enough to linearize the Hamiltonian system, the maximum condition and the transversality conditions w.r.t. both phase variables and control variables. Since we are interested in the pull-back of the Jacobi curve under the pullback $\Phi^*_t$, we apply a time dependent change of variables $\mu = \Phi_t^{-1}(\lambda)$ on $T^*M$. Then the Hamiltonian system of PMP before the maximization is transformed to
$$
\dot \mu(t) = \vec{H}(t,u,\mu(t))= \left((\Phi_t)_*^{-1}(\vec{h}(u,\cdot)-\vec{h}(\tilde{u},\cdot))\right)(\mu(t)),
$$
where 
$$
H(t,u,\cdot) = (h(u,\cdot)-h(\tilde{u},\cdot))\circ \Phi_t.
$$
The maximum principle now says that any extremal control $u$ must satisfy the weak maximum condition \eqref{eq:first_variation}, which in the new coordinates has the same form as before
$$
\frac{\p H(t,u,\mu(t))}{\p u} = 0.
$$
Since $\Phi_0 = \id$, the transversality conditions have the same form as in \eqref{eq:trans}:
$$
\mu(0) \perp T_{\tilde{q}(0)} N_0.
$$

We note that under this change of variables, the Lagrange point $(\tilde{\omega},\lambda(t))$ is transformed to $(\tilde{q}(0), \tilde{u},\lambda(0))$ and from the formula for the new Hamiltonian we obtain $\vec{H}(t,\tilde{u},\lambda(0)) = 0$. Thus linearization at $(\tilde{q}(0),\lambda(0))$ will take a particularly simple form. 

Now we can finally write down explicit equations that define $\cL_t(V)$. Let us define a time-dependent vector field
$$
X(t) := \left.\frac{\p\vec{H}(t,u,\lambda(0))}{\p u}\right|_{u=\tilde{u}}
$$
and a quadratic form
$$
b(t)(v,w) := \left.\frac{\p^2 H(t,u,\lambda(0))}{\p u^2} \right|_{u = \tilde{u}}(v,w) = \left.\frac{\p^2 h(u,\lambda(t))}{\p u^2}\right|_{u = \tilde{u}}(v,w), \qquad \forall w\in \R^k,
$$

First we linearise the Hamiltonian system at $(\tilde{q}(0),\lambda(0))$. We get
\begin{equation}
\label{eq:ham_vec}
\dot{\eta}(t) = X(t) v(t)\quad \iff \quad\eta(t) = \eta_0 +\int_0^t X(\tau) v(\tau) d\tau.
\end{equation}
By identifying $T_{\lambda(0)}(T^*M)$ with $T_{\tilde{q}(0)}^*M \times T_{\tilde{q}(0)}M$, we obtain that the linearization of the transversality conditions gives
$$
\eta_0 \in T^\perp_{\tilde{q}(0)} N_0 \times T_{\tilde{q}(0)} N_0, 
$$
where $T^\perp_{\tilde{q}(0)} N_0 \subset T_{\tilde{q}(0)}^*M $ is just the annihilator of $T_{\tilde{q}(0)} N_0$.

Finally we linearize the maximum condition to obtain
$$
\left.\frac{\p^2 H(t,u,\lambda(0))}{\p u^2} \right|_{u = \tilde{u}}(v(t),w) + \left\langle \left. d_{\mu(t)}\frac{\p H}{\p u}\right|_{u=\tilde{u}} w, \eta(t) \right\rangle = 0, \qquad \forall w\in \R^k.
$$
Using the definitions we gave before, we can write
$$
\left\langle \left. d_{\mu(t)}\frac{\p H}{\p u}\right|_{u=\tilde{u}} w, \eta(t) \right\rangle = \sigma(\eta(t), X(t) w), \qquad \forall w\in \R^k.
$$

Collecting all the formulas proves the following result.
\begin{proposition}
\label{prop:def_eq}
An $\cL$-derivative $\cL_t(V)$ over a subspace $V\in L_k^\infty[0,t]$ consists of vectors of the form
\begin{equation}
\eta(t) = \eta_0 + \int_0^t X(\tau) v(\tau) d\tau. 
\end{equation}
where $\eta_0 \in T_{\tilde q_0}^\perp N_0 \times T_{\tilde q_0} N_0$ and $v\in V$ satisfy

\begin{equation}
\label{eq:master_or}
\int_0^t \left(\sigma \left(\eta_0 +  \int_0^\tau X(\theta) v(\theta) d\theta, X(\tau) w(\tau) \right) + b(\tau) (v(\tau), w(\tau)) \right)  d\tau = 0, \qquad \forall w(t) \in V.
\end{equation}
\end{proposition}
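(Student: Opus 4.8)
The statement is a matter of assembling the linearizations already carried out, so the plan is to identify each piece of \eqref{eq:master_or} with one of them. First I would invoke Definition~\ref{def:l-deriv}: after applying the symplectomorphism $\Phi_t^*$ and the change of variables $\mu=\Phi_t^{-1}(\lambda)$, the subspace $\cL_t(V)$ is by construction the set of weak solutions of the linearized Lagrange multiplier equation \eqref{eql_deriv_def}, tested against every $w\in V$. The whole content of the proposition is thus to rewrite \eqref{eql_deriv_def} explicitly in the coordinates centred at $(\tilde q_0,\lambda(0))$, and to read off separately the shape of the admissible vectors and the linear constraint they obey.

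The shape is immediate from the linearized dynamics. Integrating the linearized Hamiltonian system \eqref{eq:ham_vec} gives $\eta(t)=\eta_0+\int_0^t X(\tau)v(\tau)\,d\tau$, which is precisely the differential of the pulled-back extended end-point map evaluated on the variation $(\eta_0,v)$. The admissible initial data are then pinned down by the linearized transversality condition, $\eta_0\in T^\perp_{\tilde q_0}N_0\times T_{\tilde q_0}N_0$, where under the identification $T_{\lambda(0)}(T^*M)\cong T^*_{\tilde q_0}M\times T_{\tilde q_0}M$ the first factor carries the covector component and the second the free displacement of the initial point along $N_0$.

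It remains to produce the constraint \eqref{eq:master_or}, which is \eqref{eql_deriv_def} spelled out. I would compute the left-hand side $\langle\xi,d\hat E(w)\rangle+Q(v,w)$ term by term: the quadratic form $Q$ supplies the integrand $b(\tau)(v(\tau),w(\tau))$ coming from $\p^2 h/\p u^2$, and the pairing of the covector/trajectory data against the linearized maximum condition supplies $\sigma(\eta(\tau),X(\tau)w(\tau))$. Substituting the Duhamel expression for $\eta(\tau)$ from \eqref{eq:ham_vec} and demanding that the integral over $[0,t]$ vanish for all test controls $w(\tau)\in V$ gives \eqref{eq:master_or}. The hard part will be the bookkeeping that collapses the two distinct objects of \eqref{eql_deriv_def}---the base pairing and the quadratic form---into a single symplectic integrand; concretely this rests on the identity $\langle d_{\mu(\tau)}(\p H/\p u)\,w,\eta(\tau)\rangle=\sigma(\eta(\tau),X(\tau)w)$ established just above, together with consistent sign and duality conventions for $\sigma$ and for the splitting $T^*_{\tilde q_0}M\times T_{\tilde q_0}M$. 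Once these identifications are checked, the weak equation restricted to $w\in V$ is literally \eqref{eq:master_or}, completing the proof.
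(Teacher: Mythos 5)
Your proposal is correct and takes essentially the same route as the paper, which obtains Proposition~\ref{prop:def_eq} precisely by collecting the three linearizations performed just before it --- the linearized Hamiltonian system \eqref{eq:ham_vec}, the linearized transversality condition, and the linearized weak maximum condition in the moving frame $\mu=\Phi_t^{-1}(\lambda)$ --- glued together by the identity $\left\langle d_{\mu(t)}\left.\frac{\p H}{\p u}\right|_{u=\tilde u} w,\eta(t)\right\rangle=\sigma(\eta(t),X(t)w)$. Your observation that only the integrated (weak) form of the maximum condition survives when the equation is tested against a subspace $V$ rather than all of $L^\infty_k[0,t]$ is exactly what \eqref{eq:master_or} encodes, so nothing is missing.
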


\begin{remark}
A more straightforward proof of this proposition can be done via explicit formulas for the first and second variation in Appendix~\ref{app:chrono}.
\end{remark}

\begin{example}
Let us revisit Example~\ref{ex:lqr}. First of all it is clear that $b(t) \equiv -\id_{k \times k}$ as can be seen directly from the Hamiltonian~\eqref{eq:lqr_hamiltonian}. Next we want to compute $X(t)$. We see that the Hamiltonian~\eqref{eq:lqr_hamiltonian} is a sum of four terms. Since in the definition of $X(t)$ we must differentiate both with respect to phase variables $(p,q)$ and control variables $u$, we obtain
\begin{equation}
\label{eq:form_X}
X(t) = \left.\frac{\p}{\p u}\right|_{u=\tilde{u}}(\Phi_t)^{-1}_*\overrightarrow{p^TB}(u-\tilde{u})(\mu(t))
\end{equation}
We need to calculate the flow $\Phi_t$, which is the flow of a linear non-autonomus equation:
\begin{align*}
\dot{q} &= Aq + B\tilde{u},\\
\dot{p} &= -A^T p - Q q,
\end{align*}
which gives us
$$
p(t) = e^{-tA^T}p_0 - e^{-tA^T} \int_0^t e^{\tau A^T} Q e^{\tau A} \left( q_0 + \int_0^\tau e^{-\theta A}B \tilde{u}(\theta) d\theta \right) d\tau.
$$
Now we just need to invert this expression with respect to $(p_0,q_0)$ and plug it in~\eqref{eq:form_X}. We obtain
$$
X(t)v = \begin{pmatrix}
e^{-tA}Bv\\
\int_0^t e^{\tau A^T} Q e^{\tau A}d\tau e^{-t A}Bv
\end{pmatrix}.
$$
From here formula~\eqref{eq:master_or} can be written explicitly.
\end{example}

As we have discussed before in the case when $U = \R^k$, the full $\cL$-derivative is defined as $\cL_t = \lim \cL_t(V)$.

We can compute $\cL_t$ using a dense subspace of $L_k^\infty[0,t]$. But one can also do the contrary and expand $L_k^\infty[0,t]$ to some weaker space. The $\cL$-derivative will not change if the first and the second differential are continuous in a weaker norm. One can note from formulas~\eqref{eq:first_der} and~\eqref{eq:second_var_g} in Appendix~\ref{app:chrono}, that the first and the second derivatives of $(E_{N_0,t},J_t)$ are actually continuous in $L^2_k[0,t]$. That is why from now on we use the space of square-integrable functions as our space of variations. This allows us to prove a simple, but important lemma.
\begin{lemma}
\label{lem:continuity}
The Jacobi curve $\cL_t$ is left continuous.
\end{lemma}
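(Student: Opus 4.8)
The plan is to reduce left continuity to the explicit description of $\cL_t(V)$ in Proposition~\ref{prop:def_eq} together with the density statement of Theorem~\ref{thm:main}(2), exploiting the fact that as $s \uparrow t$ the control spaces $L^2_k[0,s]$ increase to a \emph{dense} subspace of $L^2_k[0,t]$. The first step I would carry out is a compatibility observation: if $V$ is a finite-dimensional space of controls supported in $[0,s]$, so that $V \subset L^2_k[0,s] \subset L^2_k[0,t]$ for $s < t$, then $\cL_s(V) = \cL_t(V)$ as subspaces of the fixed space $T_{\lambda(0)}(T^*M)$. This is immediate from~\eqref{eq:master_or}: for $w \in V$ we have $w(\tau) = 0$ for $\tau > s$, hence $X(\tau)w(\tau) = 0$ and $b(\tau)(v(\tau),w(\tau)) = 0$ there, so the defining integral over $[0,t]$ collapses to the one over $[0,s]$; likewise the output vector $\eta_0 + \int_0^t X(\tau)v(\tau)\,d\tau = \eta_0 + \int_0^s X(\tau)v(\tau)\,d\tau$ is unchanged since $v$ is supported in $[0,s]$, and the finite-dimensional $\eta_0$-freedom coming from $N_0$ is common to both times.

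Next I would record the density fact that $\bigcup_{s<t} L^2_k[0,s]$, the controls vanishing on a neighbourhood of $t$, is dense in $L^2_k[0,t]$ (truncate a control to $[0,s]$ and let $s \uparrow t$). By Theorem~\ref{thm:main}(2) applied to the time-$t$ problem, and since $\Phi_t^*$ is a symplectomorphism, $\cL_t = \cL_t\big(\bigcup_{s<t} L^2_k[0,s]\big)$, which by the definition of the $\cL$-derivative over an infinite-dimensional space is the generalized limit $\lim_{W \nnearrow} \cL_t(W)$ taken over finite-dimensional $W \subset \bigcup_{s<t} L^2_k[0,s]$.

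I would then run the net argument. Fix a metric $d$ on the Lagrangian Grassmanian and $\e > 0$. By convergence of the net above, there is a finite-dimensional $W_0 \subset L^2_k[0,s_1]$ for some $s_1 < t$ such that $d(\cL_t(W),\cL_t) \le \e$ for every finite-dimensional $W$ with $W_0 \subseteq W \subseteq \bigcup_{s<t} L^2_k[0,s]$. Now fix any $s \in (s_1,t)$. The finite-dimensional $W$ with $W_0 \subseteq W \subseteq L^2_k[0,s]$ form the tail past $W_0$ of the directed set computing $\cL_s = \lim_{W \nnearrow} \cL_s(W)$, and each such $W$ also lies in the directed set computing $\cL_t$; for these $W$ the compatibility observation gives $\cL_s(W) = \cL_t(W) \in \overline{B}(\cL_t,\e)$. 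Since $\cL_s$ is the limit of a net all of whose tail values lie in the closed ball $\overline{B}(\cL_t,\e)$, we conclude $d(\cL_s,\cL_t) \le \e$ for every $s \in (s_1,t)$. As $\e$ was arbitrary this proves $\cL_s \to \cL_t$ as $s \uparrow t$, i.e. left continuity.

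The main obstacle is the bookkeeping with the two generalized limits: one must verify that the ``overlap'' family $\{W : W_0 \subseteq W \subseteq L^2_k[0,s]\}$ is genuinely a tail of the directed set defining $\cL_s$ while simultaneously sitting inside the tail defining $\cL_t$, so that the single family of equalities $\cL_s(W) = \cL_t(W)$ controls both limits at once; everything else is routine. I would also remark that the argument is inherently one-sided, since it is driven by the density of controls vanishing near $t$ in $L^2_k[0,t]$, for which there is no analogue from the right, which is exactly why only left continuity is asserted.
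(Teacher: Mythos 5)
Your proof is correct and takes essentially the same route as the paper: both compute $\cL_t$ over a dense space of variations vanishing near the endpoint $t$ (the paper uses piecewise-constant functions that are zero on the last interval, you use $\bigcup_{s<t}L^2_k[0,s]$), both rest on the compatibility $\cL_s(W)=\cL_t(W)$ for variations supported in $[0,s]$ viewed in the fixed symplectic space $T_{\lambda(0)}(T^*M)$, and both conclude by the same tail-of-the-net argument via Theorem~\ref{thm:main}(2). If anything, your bookkeeping makes explicit the uniformity over all $s\in(s_1,t)$, which the paper's closing phrase about $t_{N-1}$ being arbitrarily close to $t$ leaves implicit.
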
 
\begin{proof}
Without any loss of generality we assume that all variations are two-sided. We compute $\cL_t$ over the space of piecewise constant functions with zero on the last interval. This space is dense in $L^2([0,t],\R^k)$ and therefore $\cL_t$ does not change. Fix a neighborhood $O_{\cL_t} \subset L(T_{\lambda(0)}(T^*M))$ of $\cL_t$. Then by definition of a generalized sequence there exists a finite-dimensional subspace $V$ of simple functions, s.t. for all $W\supset V$ one has $\cL_t(W\cap \Omega_{N_0}^t)\in O_{\cL_t}$. Let $\alpha = \{0=t_0 < t_1 < ... < t_N = t\}$ be the set of jump points of all variations $v(t)\in V$. By construction $v(t) = 0$ for $t\in [t_{N-1},t_N]$. We define $V_\beta \supset V$ to be the space of simple functions $v^\beta(t)$ with possible discontinuities in $\beta\supset\alpha$, s.t. $v^\beta(t) = 0$ for $t\in [t_{N-1},t_N]$. Then by definition $\cL_t(V_\beta\cap \Omega_{N_0}^t) \in O_{\cL_t}$ for any $\beta\supset\alpha$. By refining the partition $\beta$ on $[t_0,t_{N-1}]$ we obtain
$$
\lim \cL_t(V_\beta\cap \Omega_{N_0}^t) = \cL_t(\Omega_{N_0}^{t_{N-1}}) = \cL_{t_{N-1}} \in O_{\cL_t}.
$$
Since $t_{N-1}$ can be arbitrary close to $t$, the result follows.
\end{proof}

\subsection{Optimal control problems with constraints}

Let us now consider the case when the set $U\subset \R^k$ is a disjoint union 
$$
U = \bigcup_{i=1}^N U_i
$$
of closed embedded submanifolds $U_i \subset \R^k$ without boundary. A typical example in control theory is a curve-linear polytope in $\R^k$ defined by a number of inequalities
$$
p_i(u) \leq 0
$$ 
that satisfy 
$$
p_i(u) = 0 \quad \Leftrightarrow \quad d_u p_i = 0.
$$
Then $U$ is union of the interior of the polytope and faces of different dimensions. 

In order to compute the $\cL$-derivatives we need to take derivatives of the functional and the end-point map, which implies that we use two sided variations. But when we are on the boundary of the manifold of admissible pairs, we can not variate in all directions. Definition~\ref{def:jacobi_curve} still makes sense if we just compute $\cL(E_{N_0,t},\nu J_t)[\tilde{\omega},\lambda(t)](V)$, where $V$ is the set of admissible two-sided variations. However, it does not fix the problem entirely that can be seen very well from the example when $U$ is a polytope and the extremal curve is takes values in its vertices. Such trajectories can be optimal and are called \textit{bang-bang} trajectories. There are no two-sided variations at all. To solve this issue, we introduce additional variations in our problem so that the set of two sided variations is never empty and define Jacobi curves as $\cL$-derivatives constructed over it. This will be enough to enclose all the known results and to generalize them.

Variations that we need are called \textit{time variations} or internal variations. Basically we introduce a new time variable $\tau$, given by
$$
t(\tau) = \int_0^\tau (1 + u_0(s)) ds.
$$
We assume that $u_0(s) > -1$ and we take the time variable as a new state variable
$$
\dot{t}= 1+ u_0(\tau)
$$
since the final time is fixed. 

Under these assumptions function $t(\tau)$ is strictly increasing and therefore invertible. Then our control system is transformed to
\begin{align*}
\frac{d q}{d \tau} &= (1  + u_0(\tau)) f(q,u(t(\tau))), \\
\dot{t} &= 1+ u_0(\tau)
\end{align*}
and the functional to
$$
\int_0^{\tau(T)} (1 + u_0) L(q,u) ds \to \min.
$$

If $\tilde u$ was an optimal control for (\ref{eq:cont1})-(\ref{eq:func1}), then $(\tilde u,0)$ will be optimal for the new problem. Thus after we have included time variations, we just construct the $\cL$-derivative at $((\tilde{u},0),\lambda(t))$ over the set of all available two sided-variations, which is now non-empty.

\begin{remark}
\label{rmk:time}
Time variations were previously used to derive necessary and sufficient condition of bang-bang arcs~\cite{agrachev_bang,agrachev_bang2} and with small modification of their definition, one can even prove a version of the maximum principle~\cite{dmitruk2}. These variations actually do not give contribution to the index of the second variation if the considered control $\tilde{u}(s)$ has at least $C^2$-regularity (see the discussion in~\cite{agrachev_moi}). But if the control has less regularity like in the bang-bang case, time variations allow to find necessary optimality conditions even when there are not enough two-sided variations.
\end{remark}

From now on we assume that the time variations are included in the formulation of the problem and that consequently the space of two-sided variations is non-empty. 

Now we construct the $\cL$-derivatives. Note that since $U$ is embedded in $\R^k$, the operators $X(t): \R^k \to T_{\lambda(0)}(T^*M)$ and quadratic forms $b(t): \R^k \times \R^k \to \R$ are well defined. Now we must enlarge the space of variations by including time variations. After doing this, we remain in the same class of problems as before. So we keep the notations $X_t$ and $b_t$ for the Hamiltonian vector field and the corresponding Hessian for the new system defined above.

Since we are interested only in two-sided variations, we must restrict the operator $\tilde X(t)$ to the tangent spaces of $U_i$. Since by assumptions each $U_i$ is embedded in $\R^{k}$, let us choose any metric in the ambient space and for each point $u\in U_i$ we define an orthogonal projection $\pi^i_u: \R^k \to T_{u}U_i$ that depends on a point. Then we use this to define a projection of a given variation to the subspaces of two-sided variations as
$$
\pi_{\tau}v(\tau) = \sum_{i=1}^n \chi_{U_i}(\tilde u(\tau))\pi^i_{\tilde u(\tau)} v(\tau),
$$   
where $\chi_{U_i}$ is the indicator function of $U_i$. 

We can see that finite-dimensional approximations to the $\cL$-derivative will depend on the choice of the metric in $\R^k$, but the limit $\cL$-derivative itself will not, since we approximate the same space of variations in two different ways. In order to reduce the problem to $U=\R^k$, we simply replace $X(t)$ and $b(t)$ by $X(t) \pi_t$ and $b(t)(\pi_t\cdot,\pi_t\cdot)$. By doing this, we essentially exploit Remark~\ref{rem:overdoing}. 

This way we have reduced our problem to a problem without the constraints on the control and from now on we assume that the variations $v(t)$ can take any value in $\R^k$. To simplify the notations we continue to write $X(t)$ and $b(t)$ instead of $X(t)\pi_t $ and $b(t)\pi_t $.

\begin{remark}
We note that Jacobi curves will depend on the decomposition of $U$ into a disjoint union of manifolds $U_i$. For examples, if $U$ is a union of two intersecting lines, we can assume that the intersection point belongs either to one line or the other. But for the most common choice of $U$ as a polytope, the most natural decomposition would be just to take as $U_i$ different faces of various dimensions.
\end{remark}

Let us consider some examples of variational problems which show non-smooth behaviour in order to motivate further the necessity of the developed theory.

\begin{example}
Assume that we want to minimize
$$
\int_0^\infty q(t)^2 dt\to\min
$$
subject to constraints
$$
\ddot{q}(t) = u(t), \qquad |u|\leq 1, \qquad q\in \R, \qquad u \in \R,
$$
and boundary conditions
\begin{equation}
q(0) = q_0, \qquad \dot{q}(0) = \dot{q}_0. 
\end{equation}
It turns out that the minimal control in this case is an infinite number of switches between constant controls $u = \pm 1$. The intervals of constancy are shorter and shorter in time and the trajectory arrives at the origin with an infinite number of switches in finite time. This is so-called \textit{Fuller phenomena}, which has many applications in engineering~\cite{zelikin}.

\end{example}

\begin{example}
The previous example might create an impression that such a singular behavior arises from the fact that the boundary of the set $U$ has several connected components or due to some other singularity. However discontinuities can appear even if all of the data is smooth. For example, in~\cite{caillau,carolina} the authors considered the following optimal control problem: minimize time needed to connect two points on a manifold $M$ via a control system
$$
\dot{q}=X_0(q) + \sum_{i=1}^k u_i X_i(q),
$$
with $U$ being the unit ball. The authors showed that optimal controls can indeed have jumps. 
\end{example}

\subsection{An algorithm for computing the $\cL$-derivative}
\label{sec:alg}

Equations~\eqref{eq:master_or} can be used to construct approximations of $\cL$-derivatives, which is morally the Galerkin method. In this subsection we will see that if we choose piecewise constant functions as our basis elements, then the finite dimensional approximation of~\eqref{eq:master_or} takes a nice block triangular form, which can be solved explicitly, yielding an effective algorithm for constructing approximations of the Jacobi curve.

The $\cL$-derivative for optimal control problems enjoys several useful properties. First of all we have seen that the $\cL$-derivative $\cL_T$ exists if $\ind^\pm \Hess ( E_{N_0,T},\nu J_T)[\tilde{\omega},\lambda(T)] < +\infty$. But since $E_{N_0,T} |_{\Omega_{N_0}^t}= E_{N_0,t}$ for $t\leq T$ and $L_k^2 [0,t]\subset L_k^2[0,T]$ is an isometrical embedding, we have that the existence at a moment of time $T$ implies the existence for all $t \leq T$. 

Next we prove one more useful property that greatly simplifies the computation of $\cL_t$. 

\begin{lemma}[Additivity]
\label{lemm:add}
Take $0 < t_1  < t_2$ and suppose that the index of the Hessian of the extremal curve on $[0,t_2]$ is finite. We denote by $V_2$ some finite dimensional subspace of $L^2_k[t_1,t_2]$ and we consider the following equation
\begin{equation}
\label{eq:master_short}
\int_{t_1}^{t_2} \left[ \sigma \left( \lambda + \int_{t_1}^{\tau} X(\theta) v_2(\theta) d\theta, X(\tau)  w(\tau) \right) + b(\tau)\left( v_2(\tau),  w(\tau)\right) \right] d\tau = 0, \qquad \forall w(\tau) \in V_2,
\end{equation}
where $v_2(\tau) \in V_2$, and $\lambda \in \cL_{t_1}$. 

Then $\cL_{t_2}$ is a generalized limit of Lagrangian subspaces
$$
\left\{
\lambda + \int_{t_1}^{t_2} X(\tau) v(\tau) d\tau : \lambda \in \cL_{t_1} \;,\; v(t)\in V_2 \textit{ satisfies (\ref{eq:master_short}) for any } w(t)\in V_2 \right\}.
$$

%Take any countable dense subspace $N$ of $L_2^k[t,t+\varepsilon]$ and construct a family of subspaces $V_2^1\subset V_2^2 \subset ... \subset V_2$, where $V_2^n = span\{a_i\in N: i =1,..,n\}$. Then $\cL_2$ consists of limits of vector sequences
%\begin{equation}
%\label{4}
%\lambda + \int_{t}^{t+\varepsilon} X_\tau v_i^2(\tau) d\tau 
%\end{equation}
%s.t. $\lambda \in \cL_t$, $v_i^2(t) \in V_2^i$ and
%$$
%\int_{t}^{t+\varepsilon} \left[ \sigma \left( \lambda^i + \int_{t}^{\tau} X_\theta v_i^2(\theta) d\theta, X_\tau  w(\tau) \right) + b_\tau\left( v_i^2(\tau),  w(\tau)\right) \right] d\tau = 0, \qquad \forall w(\tau) \in V_2^i.
%$$ 
\end{lemma}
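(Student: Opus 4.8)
The plan is to exploit the decoupling of the defining equation \eqref{eq:master_or} when the variation space is split across the two subintervals. First I would fix finite-dimensional subspaces $V_1 \subset L^2_k[0,t_1]$ and $V_2 \subset L^2_k[t_1,t_2]$, regard them (via extension by zero) as subspaces of $L^2_k[0,t_2]$ supported on the respective intervals, and compute $\cL_{t_2}(V_1 \oplus V_2)$ directly from \eqref{eq:master_or}. Writing a generic variation as $v = v_1 + v_2$ and testing against $w = w_1 + w_2$ with $w_i \in V_i$, the crucial observation is that both $X(\tau) w(\tau)$ and $b(\tau)(\cdot,\, w(\tau))$ vanish on the interval where $w$ vanishes. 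Hence testing against $w_1 \in V_1$ only sees the integral over $[0,t_1]$, where $v = v_1$ and the running term $\eta_0 + \int_0^\tau X(\theta) v(\theta)\, d\theta$ reduces to $\eta_0 + \int_0^\tau X(\theta) v_1(\theta)\, d\theta$; this reproduces verbatim the equation \eqref{eq:master_or} at time $t_1$ over $V_1$, so that $\lambda := \eta_0 + \int_0^{t_1} X(\theta) v_1(\theta)\, d\theta$ ranges exactly over $\cL_{t_1}(V_1)$. Testing against $w_2 \in V_2$ only sees the integral over $[t_1,t_2]$, where the accumulated term $\int_0^{t_1} X(\theta) v_1(\theta)\, d\theta$ enters as the constant $\lambda$; this is precisely \eqref{eq:master_short}.

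This block-triangular structure shows that $\cL_{t_2}(V_1 \oplus V_2)$ consists exactly of the vectors $\lambda + \int_{t_1}^{t_2} X(\tau) v_2(\tau)\, d\tau$ with $\lambda \in \cL_{t_1}(V_1)$ and $v_2 \in V_2$ solving \eqref{eq:master_short}. Denoting by $\Psi_{V_2}$ the map sending a Lagrangian plane $\Lambda$ to this propagated set, the content of the previous paragraph is the identity $\cL_{t_2}(V_1 \oplus V_2) = \Psi_{V_2}(\cL_{t_1}(V_1))$; note that both \eqref{eq:master_short} and the output factor through the value $\lambda$ alone and not through the particular pair $(\eta_0,v_1)$, so $\Psi_{V_2}$ is a well-defined operation on Lagrangian planes, and identifying it with the $\cL$-derivative of the problem restarted on $[t_1,t_2]$ with boundary transversality space replaced by $\Lambda$ shows its output is again Lagrangian.

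It remains to pass to the limit. I would first note that subspaces of the form $V_1 \oplus V_2$ are cofinal among finite-dimensional subspaces of $L^2_k[0,t_2] = L^2_k[0,t_1] \oplus L^2_k[t_1,t_2]$ (given any $W$, enlarge it to the sum of its two coordinate projections), so that $\cL_{t_2}$ is the generalized limit of $\Psi_{V_2}(\cL_{t_1}(V_1))$ over the product directed set of pairs $(V_1,V_2)$; the assumed finiteness of the index on $[0,t_2]$ guarantees via Theorem~\ref{thm:main}, together with the monotonicity remark preceding the lemma, that all these limits exist, and in particular that $\cL_{t_1}$ exists. For each fixed $V_2$ the map $\Psi_{V_2}$ is continuous on the Lagrangian Grassmannian, being assembled from the linear solution operator of the finite system \eqref{eq:master_short} with output of constant dimension, so that $\lim_{V_1 \nnearrow L^2_k[0,t_1]} \Psi_{V_2}(\cL_{t_1}(V_1)) = \Psi_{V_2}(\cL_{t_1})$. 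A net version of the Moore--Osgood theorem then converts the joint limit into the iterated one: given a closed neighborhood of $\cL_{t_2}$, cofinality provides $(V_1^0,V_2^0)$ beyond which $\Psi_{V_2}(\cL_{t_1}(V_1))$ stays inside it, and letting $V_1$ increase for each fixed $V_2 \supseteq V_2^0$ keeps the limit $\Psi_{V_2}(\cL_{t_1})$ in the closed neighborhood, yielding $\cL_{t_2} = \lim_{V_2} \Psi_{V_2}(\cL_{t_1})$ as claimed.

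I expect the main obstacle to be this last step rather than the decoupling, which is an essentially algebraic manipulation. Justifying the interchange of the two limits requires both the continuity of $\Psi_{V_2}$ in its Lagrangian-plane argument and enough uniformity to invoke the net form of Moore--Osgood. The cleanest way to secure the continuity is to reinterpret \eqref{eq:master_short} as the defining equation of the $\cL$-derivative of the shifted control problem on $[t_1,t_2]$ whose transversality subspace is $\Lambda = \cL_{t_1}$, so that $\Psi_{V_2}$ inherits the regularity and Lagrangian output already established for finite-dimensional $\cL$-derivatives in Proposition~\ref{prop:def_eq} and \cite{agrachev_cime}; this also reduces the passage to the limit to the convergence theory already developed for $\cL$-derivatives.
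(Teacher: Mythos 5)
Your decoupling step is correct and coincides with how the paper's proof begins: writing $v=v_1+v_2$, testing \eqref{eq:master_or} against $w_1\in V_1$ reproduces the defining equation of $\cL_{t_1}(V_1)$, testing against $w_2\in V_2$ gives \eqref{eq:master_short} with the accumulated value $\lambda$ as the only memory of the first interval, and your cofinality of split subspaces matches the paper's use of the projection $\pi_2$. The genuine gap is the claim that the propagation map $\Psi_{V_2}$ is \emph{continuous} on the Lagrangian Grassmanian. It is not: $\Psi_{V_2}$ is composition of a Lagrangian plane with a fixed linear canonical relation (a partial symplectic reduction), and such operations are discontinuous exactly where the relevant intersection dimensions jump; the fact that the output has constant dimension $n$ does not help. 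Concretely, take $n=2$, $k=1$, $b\equiv 0$ and $X(\tau)\equiv e$ constant on $[t_1,t_2]$, with $V_2$ the constants; then \eqref{eq:master_short} reduces to $\sigma(\lambda,e)=0$ with $v$ unconstrained, so $\Psi_{V_2}(\Lambda)=(\Lambda\cap e^\angle)+\R e$. With $\sigma=dp\wedge dq$, $e=e_{q_1}$ and $\Lambda_\varepsilon=\spn(e_{q_1}+\varepsilon e_{p_2},\,e_{q_2}+\varepsilon e_{p_1})$ (a Lagrangian family), one computes $\Psi_{V_2}(\Lambda_\varepsilon)=\spn(e_{q_1},e_{p_2})$ for every $\varepsilon\neq 0$, while $\Psi_{V_2}(\Lambda_0)=\spn(e_{q_1},e_{q_2})$. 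The degenerate regime $b\equiv 0$ occurs precisely along singular and abnormal arcs, i.e.\ the situations this lemma is designed to handle, so you cannot assume the net $\cL_{t_1}(V_1)$ stays away from the discontinuity locus, and continuity at the single point $\cL_{t_1}$ is exactly what may fail; the Moore--Osgood interchange therefore does not go through as stated. (A smaller repairable slip: a general Lagrangian plane $\Lambda$ is not of the form $T^\perp_{q}N_0\times T_{q}N_0$ for any submanifold $N_0\subset M$, so the "restarted problem" identification needs either Lagrangian initial submanifolds of $T^*M$ or, more simply, the direct linear-algebra verification that solution sets of \eqref{eq:master_short} propagate Lagrangian planes to Lagrangian planes.)

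The paper circumvents continuity entirely. It fixes once and for all a countable dense subspace $V_1$ of the variation space on $[0,t_1]$ (including variations of the initial point), takes a finite-dimensional $W$ beyond the threshold defining a prescribed neighborhood $O_{\cL_{t_2}}$, and builds the nested chain $U_1\subset U_2\subset\cdots$ by adjoining basis vectors of $V_1$ to $W$. Convergence of $\cL_{t_2}(U_i)$ to $\cL_{t_2}(V_1\oplus\pi_2(W))$ is obtained from the finiteness of the index of the Hessian on $[0,t_2]$ via the monotone-bounded-matrix mechanism behind Theorem~\ref{thm:main} (applied to the subspace $V_1\oplus\pi_2(W)$), not from any continuity of the propagation; since each $U_i\supset W$, every $\cL_{t_2}(U_i)$ lies in $O_{\cL_{t_2}}$, so the limit does too. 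Finally, the identification of this limit with the propagated set in the statement is done vectorwise: elements of $\cL_{t_2}(U_i)$ have the decoupled form $\eta+\int_0^{t_1}Xv_1^i+\int_{t_1}^{t_2}Xv_2$ with the two equations you wrote, and in the limit the first-interval parts $\eta+\int_0^{t_1}Xv_1^i$ converge to elements of $\cL_{t_1}$. In other words, the paper performs your inner limit \emph{inside} the index/monotonicity machinery, where convergence is guaranteed along the specific increasing net, rather than through topological continuity of $\Psi_{V_2}$, which is false in general; to salvage your scheme you would have to prove the inner-limit identity $\lim_{V_1}\Psi_{V_2}(\cL_{t_1}(V_1))=\Psi_{V_2}(\cL_{t_1})$ along that monotone net, which is essentially the paper's argument.
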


\begin{proof}
By the existence theorem and the remark above we know that $\cL_{t_2}$ and $\cL_{t_1}$ exist and $\cL_{t_2}$ can be computed over any dense subspace of the variation space. So we compute it over $V_1 \oplus V_2 = V \subset \Omega_{N_0}^{t_2}$, where $V_1$ is a span of a countable dense subset in $\Omega_{N_0}^{t_1}$ that includes variations of the initial point. Denote by $\pi_i$ the projection onto $V_i$.

Now fix a neighbourhood $O_{\cL_2}$ in the Lagrangian Grassmanian and consider a finite-di\-men\-sional subspace $W\subset V$, s.t. for any finite dimensional $U\supset W$ we have $\cL_t(U)\in O_{\cL_2}$. Then we can construct a countable sequence of nested subspaces 
$$U_1 \subset U_2 \subset ...$$
by adding vectors from the basis of $V_1$. As a result we get a sequence $\cL_t(U_i)$ which converges to $\cL_t(V_1\oplus \pi_2 (W))$, since the index over the Hessian on this subspace must be finite as well. Note that in this case by construction $\cL_t(V_1\oplus \pi_2 (W))\subset O_{\cL_2}$. By taking finer and finer $O_{\cL_2}$ we realize $\cL_{t_2}$ as a limit of vectors from $\cL_t(V_1\oplus \pi_2 (W))$.

It remains to show that (\ref{eq:master_short}) holds. And indeed, any element of $\cL_t(U_i)$ is of the form
$$
\eta + \int_{0}^{t_1} X(\tau) v_1^i(\tau) d\tau  +\int_{t_1}^{t_2} X(\tau) v_2(\tau) d\tau, \qquad v_1^i \in V_1 \cap U_i \qquad v_2 \in \pi_2 (W)  
$$
s.t.
\begin{align*}
&\int_0^{t_1} \left[\sigma(\eta + \int_0^{\tau} X(\theta) v_1^i(\theta) d\theta,X(t) w_1(\tau)) + b(\tau)(v_1^i(\tau),w_1(\tau))\right]d\tau = 0\\
&\int_{t_1}^{t_2} \left[\sigma(\eta + \int_0^{t} X(\theta) v_1^i(\theta) d\theta + \int_{t_1}^{\tau} X(\theta) v_2(\theta) d\theta,X(\tau) w_2(\tau)) + b(\tau)(v_2(\tau),w_2(\tau))\right]d\tau = 0 
\end{align*}
for any $w_1 \in V_1\cap U_i$, $w_2 \in \pi_2 (W)$. Therefore as we take the limit, the vectors
$$
\eta + \int_0^{t_1} X(\tau) v^i_1(\tau) d\tau
$$
will converge to vectors from $\cL_t(V_1)$.

\end{proof}

These properties are enough to have an algorithm for computing $\cL_t$ at each moment of time $t$ with arbitrary good precision. Since we can replace $L^2_k[0,t]$ with any dense subset, we compute $\cL_t$ over the space of piecewise constant functions. To construct an approximation of $\cL_t$ we just have to take some partition $D = \{0 <t_1<t_2<...<t_N = t\}$ of $[0,t]$ and construct $\cL_t(V_D)$, where $V_D\subset \Omega_{N_0}^t$ is the space of variations of the initial point and piecewise constant variations of the control with jumps at $D$. Then we can use the additivity lemma to iteratively construct an approximation to $\cL_t(V_D)$, given by $T_{\tilde{q}(0)}^\perp N_0 \times T_{\tilde{q}(0)} N_0= \cL_t(V^{\{0\}})$, $\cL_t(V^{\{0,t_1\}})$, $\cL_t(V^{\{0,t_1,t_2\}})$ and so on. So at the end we just need to understand how $\cL_t$ changes when we add constant variations $\R^k \chi_{[t,t+\varepsilon]}$. In this case at each step we need to solve an over-determined finite-dimensional linear system. A convenient machinery for such type of equations is the notion of pseudo-inverses. We recall their basic definition.

\begin{definition}
Let $A: \R^m \to \R^n$ be a linear map between two Euclidean spaces and $A^*$ be its adjoint. Then \textit{the Moore-Penrose pseudoinverse} $A^+$ can be defined as
$$
A^+ = \lim_{\varepsilon\to 0} (\varepsilon \id + A^*A)^{-1}A^*.
$$
\end{definition}

The Moore-Penrose inverse has many interesting properties. The most useful one for us will be the following one.
\begin{proposition}
If the linear solution $Ax = b$ admits at least one solution, then $y=A^+b$ is the minimal norm solution of this equation.
\end{proposition}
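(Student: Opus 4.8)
The plan is to reduce everything to the orthogonal geometry of the decompositions $\R^m = \ker A \oplus (\ker A)^\perp$ and $(\ker A)^\perp = \im A^*$, and then to identify $A^+b$ explicitly. First I would recall the standard description of the solution set: if $Ax=b$ is solvable, the set of all solutions is the affine subspace $x_p + \ker A$ for any particular solution $x_p$. Splitting $x_p = x_p^\parallel + x_p^\perp$ with $x_p^\parallel \in (\ker A)^\perp$ and $x_p^\perp \in \ker A$, Pythagoras gives $\|x_p^\parallel + z\|^2 = \|x_p^\parallel\|^2 + \|z\|^2$ for every $z \in \ker A$, so the unique minimal-norm solution is the one lying in $(\ker A)^\perp$, namely $x_p^\parallel$. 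It therefore suffices to prove two assertions: (i) $A^+b \in \im A^*$, and (ii) $A A^+ b = b$ whenever $b \in \im A$.

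For (i), I would show that each finite-$\varepsilon$ approximant already lies in $\im A^*$. The subspace $\im A^*$ is invariant under $A^*A$ (since $A^*A(A^*z) = A^*(AA^*z)$), hence under $\varepsilon\id + A^*A$ and under its inverse; as $A^* b \in \im A^*$, the vector $(\varepsilon\id + A^*A)^{-1}A^* b$ stays in $\im A^*$ for every $\varepsilon > 0$, and $\im A^*$ being closed, the limit $A^+ b$ lies there too. For (ii), the key observation is that the restriction of $A^*A$ to $(\ker A)^\perp = \im A^*$ is positive definite: if $v \in \im A^*$ and $A^*Av = 0$ then $\|Av\|^2 = \langle A^*Av, v\rangle = 0$, forcing $v \in \ker A \cap \im A^* = \{0\}$. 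Writing $b = A x_p = A x_p^\parallel$ with $x_p^\parallel \in \im A^*$, and diagonalizing $A^*A$ on $\im A^*$ with eigenvalues $\mu_i > 0$, the operator $(\varepsilon\id + A^*A)^{-1}A^*A$ acts on the corresponding eigenvectors by the scalar $\mu_i/(\varepsilon + \mu_i) \to 1$. Hence $(\varepsilon\id + A^*A)^{-1}A^*A\, x_p^\parallel \to x_p^\parallel$, so that $A^+ b = \lim_\varepsilon (\varepsilon\id + A^*A)^{-1}A^* A\, x_p^\parallel = x_p^\parallel$. In particular $A^+ b = x_p^\parallel$ is a genuine solution lying in $(\ker A)^\perp$.

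Combining the two steps, $A^+b$ is a solution of $Ax=b$ orthogonal to $\ker A$, so by the first paragraph it is exactly the minimal-norm solution. The only delicate point is the spectral step in (ii): one must split off the kernel direction before passing to the limit, because $\varepsilon\id + A^*A$ is invertible for $\varepsilon > 0$ while $A^*A$ degenerates on $\ker A$ as $\varepsilon \to 0$. The computation $A^+b = x_p^\parallel$ simultaneously shows that the limit defining $A^+$ exists on $\im A$ and pins down its value; this is where I expect to spend the most care, the remaining claims being elementary orthogonality.
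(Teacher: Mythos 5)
Your proof is correct. There is nothing in the paper to compare it against: the proposition is stated as a known property of the Moore--Penrose pseudoinverse and used as a black box, with no proof supplied. Your argument is a sound, self-contained verification working directly from the paper's regularized definition $A^+ = \lim_{\varepsilon\to 0}(\varepsilon\id + A^*A)^{-1}A^*$. The reduction of minimal norm to membership in $(\ker A)^\perp = \im A^*$ via Pythagoras is the standard one; the positive definiteness of $A^*A$ restricted to $\im A^*$ is established correctly (via $\ker A \cap \im A^* = \{0\}$); and the spectral computation $(\varepsilon\id + A^*A)^{-1}A^*A\,x_p^\parallel \to x_p^\parallel$ does double duty, proving that the defining limit exists on vectors $b \in \im A$ (something the paper's definition tacitly presupposes, and which your hypothesis $b \in \im A$ makes sufficient for the statement) and identifying $A^+b = x_p^\parallel$ as the unique solution orthogonal to $\ker A$. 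One small streamlining: your step (i) is logically redundant, since the explicit formula $A^+b = x_p^\parallel \in \im A^*$ obtained in step (ii) already gives membership in $\im A^*$; the invariance argument for $\im A^*$ under $(\varepsilon\id + A^*A)^{-1}$, while correct, can be dropped --- as you in effect acknowledge in your closing remarks.
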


Now we are ready to describe our algorithm.
\begin{theorem}
\label{thm:algorithm}
Suppose that we know $\cL_t(V)$, where $V$ is some space of variations defined on $[0,t]$. We identify $\cL_t(V)$ with $\R^n$ and the space of control parameters with $\R^k$, and put an arbitrary Euclidean metric on both of them. Let $E$ be the space of all $v\in \R^k$ for which
$$
\sigma\left( \eta, \frac{1}{\varepsilon}\int_t^{t+\varepsilon} X(\tau) d\tau \cdot v\right) = 0, \qquad \forall \eta \in \cL_t(V)
$$
and let $L\subset \cL_t(V)$ consisting of all $\eta\in \cL_t(V)$, s.t.
$$
\sigma\left( \eta, \frac{1}{\varepsilon}\int_t^{t+\varepsilon} X(\tau) d\tau \cdot w \right) = 0, \qquad \forall w \in \R^k.
$$
We define two bilinear maps $A_R: \cL_t(V) \times E^{\perp} \to \R $, $ Q_R:  E^{\perp} \times E^{\perp}  \to \R$:
\begin{align*}
A_R: (\eta, w) &\mapsto \sigma  \left( \eta, \frac{1}{\varepsilon}\int_t^{t+\varepsilon} X(\tau) d\tau \cdot w \right), \\
 Q_R: (v,w) &\mapsto \frac{1}{\varepsilon}\int_{t}^{t+\varepsilon}\sigma \left( \int_t^\tau X(\theta) d\theta \cdot v, X(\tau) w\right) + b(\tau)(v,w) d\tau,
\end{align*}
and we use the same symbols for the corresponding matrices. 

Then the new $\cL$-derivative $\cL_{t+\varepsilon}(V \oplus \R^k \chi_{[t,t+\varepsilon]})$ is a span of vectors from the subspace  $L$ and vectors 
$$
\eta_i + \frac{1}{\varepsilon} \int_t^{t+\varepsilon} X(\tau) d\tau \cdot v_i, 
$$
where $v_i$ is an arbitrary basis of $E^\perp$ and $\eta_i$ are defined as
\begin{equation}
\label{eq:solution}
\eta_i = - A^+_R Q_R v_i.
\end{equation}
\end{theorem}

\begin{proof}
From the additivity lemma it follows that it is sufficient to construct $n$ independent solutions of the equation
\begin{equation}
\label{eq:master}
\int_{t}^{t+\varepsilon}\sigma \left(\eta + \frac{1}{\varepsilon}\int_t^\tau X(\theta) d\theta \cdot v, X(\tau) w\right) + \frac{b(\tau)(v,w)}{\varepsilon} d\tau = 0, \qquad \forall w \in \R^k,
\end{equation}
where $\eta \in \cL_t(V), v\in \R^k$, modulo the relation
$$
\eta+ \int_t^{t+\varepsilon}X(\tau)v(\tau)d\tau = 0.
$$ 
Using the fact that $\eta \in \cL_t(V)$, we can rewrite this condition as
$$
\int_{t}^{t+\varepsilon}\sigma \left(\eta, X(\tau) v\right) d\tau = 0, \qquad \forall \eta \in \cL_t(V),
$$
i.e. $v \in E$. It means that we need to solve~\eqref{eq:master} with $v\in E^\perp$. But this way we have $n+\dim E^\perp$ variables and only $k$ equations. So if we want to have at least $n$ independent solution, there should be $k-\dim E^\perp$ dependencies, i.e. there exist $k-\dim E^\perp$ independent vectors $w_i \in \R^k$ such that
$$
\int_{t}^{t+\varepsilon}\sigma \left(\eta + \frac{1}{\varepsilon}\int_t^\tau X(\theta) d\theta \cdot v, X(\tau) w_i\right) + \frac{b(\tau)(v,w_i)}{\varepsilon} d\tau = 0, \qquad \forall v \in E^\perp, \forall \eta \in \cL_t(V).
$$
In particular
$$
\int_{t}^{t+\varepsilon}\sigma \left(\eta, X(\tau) w_i\right) d\tau = 0, \qquad \forall \eta \in \cL_t(V),
$$
and hence $w_i$ must form a basis of $E$. This means that we can reduce~\eqref{eq:master_or} to the same equations, but with $v,w\in E^\perp$.

Moreover by definition we have
$$
L = \cL_t(V)\cap \cL_{t+\varepsilon}(V \oplus \R^k \chi_{[t,t+\varepsilon]}) = \left\{\eta \in \cL_t(V): \sigma\left( \eta, \int_t^{t+\varepsilon} X(\tau) d\tau \cdot w \right)= 0, \, \forall w \in \R^k \right\}.
$$
So we only need to find $n-\dim L$ vectors $(\eta,v) \in L^\perp\times E^\perp$ which solve the reduced equation
$$
A_R \eta + Q_R v = 0. 
$$
By construction $\ker A_R = \{0\}$. This means that for each $v\in R^\perp$ such that $Qv\in \ran A$ there exists a unique $\eta \in L^\perp$ which solves this equation and the solution is given by~\eqref{eq:solution}. Since there are $n-\dim L$ independent solutions of this equation, we must have $\dim E^\perp \geq n - \dim L = \dim L^\perp$. But both those dimensions are actually equal. Indeed let us denote by $A: \cL_t(V) \times \R^k \to \R $ the bilinear form
$$
(\eta,w)\mapsto \sigma\left( \eta, \int_t^{t+\varepsilon} X(\tau) d\tau \cdot w \right),
$$ 
as well as the corresponding matrix. It is clear that $L = \ker A$ and $E=\ker A^*$. Since the column rank and row rank are equal we have $\dim L^\perp = \dim E^\perp$.

\end{proof}

We stress once again that the $\cL$-derivative itself is invariant and does not depend on the choices we make. The proven theorem is going to play an essential role in the Morse-type theorems that we are going to state and prove in Section~\ref{sec:symp_and_morse}.

%\section{Jacobi fields and the glueing formula}
%\label{sec:glue}
%\input{glue}

\section{Symplectic geometry and Morse type theorems}
\label{sec:symp_and_morse}

Under Morse type theorems we understand results that relate the behaviour of the Jacobi curve or its approximation with the index or the kernel of the corresponding Hessian. Before stating these results we need some definitions and theorems from symplectic geometry. We should note that the sign conventions in this work differ from the sign conventions in some of the previous articles like~\cite{agrachev_quadratic,agr_gamk_symp} and some of the standard textbooks~\cite{sal_mcd,gosson}. The reason for this is that we would like to handle the normal and abnormal cases in a unified manner, and that in the smooth cases we want the Morse index of the Hessian to be equal to the Maslov index of the related Jacobi curve. This fact will determine a fixed co-orientation on the Maslov train $\cM_\Pi$, which will be defined in this section. This way the classical Jacobi equation for regular extremals produces a monotone increasing curve in the Lagrangian Grassmanian in the sense of Definition~\ref{def:monotone}.

\subsection{Linear symplectic geometry}
\label{sec:sympl_def}

Let $(\Sigma,\sigma)$ be a symplectic space and denote by $L(\Sigma)$ the corresponding Lagrangian Grassmanian. We will denote by $\Pi^\pitchfork$ the set of all Lagrangian planes transversal to a given Lagrangian plane $\Pi \in L(\Sigma)$.

Fix $\Pi\in L(\Sigma)$ and choose a plane $\Delta \in \Pi^\pitchfork$ transversal to it. Then $\Sigma = \Pi \oplus \Delta$ and any $L\in \Pi^\pitchfork$ can be identified with a graph $(x,Sx)$ of a map $S:\Delta \to \Pi$. One can easily check that $S$ defines a Lagrangian subspace if, and only if, $S$ is symmetric. This construction gives a local chart, which allows to identify locally $L(\Sigma)$ with the space of symmetric matrices. 

A more intrinsic but essentially the same way to identify a neighbourhood of $\Delta \in \Pi^\pitchfork$ with symmetric quadratic forms goes as follows. Let $P_\Lambda$ be the projection operator from $\Delta$ to $\Lambda \in \Pi^\pitchfork$ parallel to $\Pi$. Then we identify $\Lambda$ with
$$
Q_\Lambda(\lambda) = \sigma(P_\Lambda\lambda,\lambda), \qquad \lambda\in\Delta. 
$$
A simple equality
$$
\sigma(P_\Lambda \lambda, \mu) + \sigma(\lambda,P_\Lambda  \mu) = \sigma(\lambda,\mu),\qquad \forall \lambda,\mu\in\Sigma.
$$
implies that $Q$ is symmetric. One can prove, that for any $\Lambda\in \Pi^\pitchfork$ there exists a unique projection operator $P_\Lambda: \Sigma \to \Lambda$, s.t. the equality above holds and $P_\Lambda \Pi = \{0\}$. For more details see~\cite{sternberg}.

Similarly one has an identification of the tangent space $T_{\Lambda} L(\Sigma)$ with the space $\Sym(\Lambda)$ of all symmetric quadratic forms on $\Lambda$. Indeed, given $\Lambda$ take any curve $\Lambda(\varepsilon) \in L(\Sigma)$, s.t. $\Lambda(0) =\Lambda$, fix $\lambda(0) \in \Lambda(0)$ and take any curve $\lambda(\varepsilon) \in \Lambda(\varepsilon)$. Then we identify $\dot{\Lambda}_0$ with the quadratic form $\sigma(\lambda(0),\dot{\lambda}(0))$. The definition does not depend on the choice of the curve $\lambda(\varepsilon)$. Indeed, the definition is local so we can choose any coordinate chart centred at $\Lambda(0)$ and represent it using a curve of symmetric matrices $S:[-1,1]\to \sym(n)$. Then we can write $\lambda(s) = (p(s),S(s)p(s))$, for some curve $p:[-1,1]\to\R^n$. From here we compute
\begin{align*}
\sigma(\lambda,\dot\lambda)|_{s=0}&=\left.\sigma\left((p,Sp),(\dot p,\dot Sp+S\dot p)\right)\right|_{s=0} = \\
&= \left.\left(p^T(\dot Sp+S\dot p) -  \dot p^T S p \right) \right|_{s=0} = p^T(0)\dot S(0)p(0).
\end{align*}

\begin{definition}
\label{def:monotone}
We say that a $C^1$-curve $\Lambda(t) \in L(\Sigma)$ is \emph{monotone increasing} if the corresponding matrix $\dot{\Lambda}(t) > 0$ as a quadratic form on $\Lambda(t)$ for every $t$. We say that it is \emph{strictly monotone} if this inequality is strict.
\end{definition}
We can define similarly monotone decreasing curves, but it turns out that $C^1$-smooth Jacobi curves of minimum problems are always monotone increasing.

Now we define the Maslov index of a continuous curve. The \textit{Maslov train} $\cM_\Pi$ is the set of all Lagrangian planes non-transversal to $\Pi \in L(\Sigma)$, i.e. $\cM_\Pi = L(\Sigma) \setminus \Pi^\pitchfork$. This is a stratified manifold where each strata $\cM^k_\Pi$ is the set of all Lagrangian planes $\Lambda\in L(\Sigma)$ s.t. $\dim \left( \Lambda \cap \Pi \right) = k$. The dimension of each strata is 
$$
\dim \cM_\Pi^k = \frac{n(n+1)}{2} - \frac{k(k+1)}{2}.
$$ 
We can see that the highest dimensional strata $\cM_\Pi^1$ has codimension one in $L(\Sigma)$. To define an intersection index we need to define a co-orientation on $\cM_\Pi^1$. Suppose that $\Lambda(\varepsilon) \in L(\Sigma)$ intersects $\cM_\Pi^1$ transversally at $\varepsilon = 0$, i.e. there exists unique up to a scalar factor $\lambda \in \Lambda(0) \cap \Pi$. We define a positive co-orientation when 
$$
\dot{\Lambda}(0)(\lambda)>0.
$$
Similarly one defines a negative co-orientation. 

\begin{definition}
\emph{The Maslov index} $\Mi_\Pi(\Lambda(t))$ of a curve $\Lambda(t)$ is the intersection number of $\Lambda(t)$ with $\cM_\Pi^1$. For a curve in general position this is just a number of intersections $\Lambda(t)\cap \Pi$ counted with signs.
\end{definition}
Since $\cM_\Pi^2$ has codimension three, the Maslov index is well defined and it is a homotopy invariant. The importance of the Maslov index comes from the following fact.
\begin{proposition}
\label{prop:maslov_closed}
The Maslov index $\Mi_\Pi$ as a function on the loops in the Lagrangian Grassmanian $L(\Sigma)$ induces an isomorphism $\pi_1(L(\Sigma))\to\Z$. Moreover this isomorphism does not depend on the choice of $\Pi$, i.e.
$$
\Mi_{\Pi}(\Lambda(t)) = \Mi_{\Delta}(\Lambda(t)), \qquad \forall \Pi,\Delta \in L(\Sigma).
$$
\end{proposition}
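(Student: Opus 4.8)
The plan is to establish three things in sequence: that $\pi_1(L(\Sigma))$ is infinite cyclic, that $\Mi_\Pi$ realises an isomorphism onto $\Z$, and that the answer is independent of $\Pi$. Homotopy invariance of $\Mi_\Pi$ on loops is already granted, and since intersection numbers add under concatenation, $\Mi_\Pi$ descends to a group homomorphism $\pi_1(L(\Sigma))\to\Z$; as the target is abelian there is no difference between free and based homotopy classes here. It therefore suffices to compute $\pi_1(L(\Sigma))$ and to check that $\Mi_\Pi$ sends a generator to $\pm1$.

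For the fundamental group I would pass to the unitary model. Fix a complex structure $J$ compatible with $\sigma$, so that $\langle x,y\rangle:=\sigma(x,Jy)$ is a positive definite inner product and $(\Sigma,\sigma,J)$ becomes a Hermitian space identified with $\mathbb{C}^n$. The unitary group acts transitively on $L(\Sigma)$ and the stabiliser of the real Lagrangian $\R^n$ is $\gO(n)$, giving $L(\Sigma)\cong\gU(n)/\gO(n)$; concretely every Lagrangian is $U\cdot\R^n$ for some $U\in\gU(n)$, unique up to right multiplication by $\gO(n)$. The squared determinant
$$
\delta\colon \gU(n)/\gO(n)\to S^1, \qquad \delta([U])=(\det U)^2,
$$
is well defined, since $\det$ of an orthogonal matrix is $\pm1$. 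It is a fibration with fibre $\SU(n)/\gSO(n)$, and the homotopy exact sequence of $\gSO(n)\hookrightarrow\SU(n)\to\SU(n)/\gSO(n)$ shows the fibre is simply connected; the exact sequence of $\delta$ then yields $\delta_\ast\colon\pi_1(L(\Sigma))\xrightarrow{\sim}\pi_1(S^1)=\Z$. In particular $\pi_1(L(\Sigma))\cong\Z$, generated by any loop whose $\delta$-image winds once.

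The core step is to identify $\Mi_\Pi$ with the winding number of $\delta$. Both are homotopy-invariant homomorphisms $\pi_1(L(\Sigma))\to\Z$, and since the source is infinite cyclic it is enough to compare them on a single generator. I would take $\Pi=J\R^n$ and the explicit loop $\Lambda(t)=\mathrm{diag}(e^{\pi it},1,\dots,1)\cdot\R^n$, $t\in[0,1]$, which is closed since $\mathrm{diag}(-1,1,\dots,1)\R^n=\R^n$. Its image under $\delta$ is $e^{2\pi it}$, winding exactly once, so $[\Lambda]$ is a generator. Geometrically only the first coordinate line rotates, and $\Lambda(t)\cap\Pi\neq\{0\}$ occurs transversally at the single time $t=\tfrac12$; evaluating the velocity form $\dot\Lambda(\tfrac12)(\lambda)=\sigma(\lambda(\tfrac12),\dot\lambda(\tfrac12))$ on the curve $\lambda(t)=e^{\pi it}e_1$, whose value at $t=\tfrac12$ generates the intersection, yields a strictly positive number, so this crossing contributes $+1$. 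Hence $\Mi_\Pi(\Lambda)=1$ equals the $\delta$-winding, the two homomorphisms coincide, and $\Mi_\Pi$ is an isomorphism. I expect the genuine difficulty to lie precisely in this matching, and specifically in the sign bookkeeping: one must verify that the co-orientation convention $\dot\Lambda(0)(\lambda)>0$ of Definition~\ref{def:monotone}, together with the identifications of the preceding subsection, produces the same sign as the analytic winding, consistently with the paper's chosen conventions.

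Finally, independence of $\Pi$ follows from homogeneity. The group $\gSp(\Sigma)$ acts transitively on $L(\Sigma)$, so for any two Lagrangians there is $g\in\gSp(\Sigma)$ with $g\Pi=\Delta$; the induced diffeomorphism of $L(\Sigma)$ carries $\cM_\Pi$ to $\cM_\Delta$ and, being symplectic, preserves the co-orientation (defined purely through $\sigma$ via $\dot\Lambda(\lambda)>0$), whence $\Mi_\Delta(g\circ\Lambda)=\Mi_\Pi(\Lambda)$ for every loop $\Lambda$. Since $\gSp(\Sigma)$ is connected, $g$ is joined to the identity through symplectomorphisms, so $g\circ\Lambda$ is homotopic to $\Lambda$; homotopy invariance then gives $\Mi_\Delta(g\circ\Lambda)=\Mi_\Delta(\Lambda)$, and combining the two equalities yields $\Mi_\Pi(\Lambda)=\Mi_\Delta(\Lambda)$.
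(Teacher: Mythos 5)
Your proof is correct, but note that the paper itself does not prove this proposition: it defers entirely to the reference \cite{agrachev_quadratic}, so there is no internal argument to match. What you give is the classical Arnold computation, and all of its steps check out. The identification $L(\Sigma)\cong \gU(n)/\gO(n)$ and the fibration $\delta=(\det)^2\colon \gU(n)/\gO(n)\to S^1$ with simply connected fibre $\SU(n)/\gSO(n)$ (via $\pi_1(\SU(n))=0$ and $\pi_0(\gSO(n))=0$) correctly give $\pi_1(L(\Sigma))\cong\Z$. The sign you assert but do not compute does come out right under the paper's conventions: with $\sigma\bigl((x_1,y_1),(x_2,y_2)\bigr)=x_1^Ty_2-y_1^Tx_2$ and $J=i$ (which is compatible, as $\sigma(x,ix)>0$), your loop satisfies $\lambda(\tfrac12)=ie_1=(0,e_1)$, $\dot\lambda(\tfrac12)=-\pi e_1=(-\pi e_1,0)$, hence $\dot\Lambda(\tfrac12)(\lambda)=\sigma\bigl((0,e_1),(-\pi e_1,0)\bigr)=\pi>0$, so the unique crossing (the endpoints $\R^n$ are transversal to $\Pi=i\R^n$, as required) contributes $+1$ and $\Mi_\Pi$ sends a generator to $1$. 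Your independence argument is also sound: a symplectomorphism preserves the co-orientation because it is defined purely through $\sigma$, and connectedness of $\gSp(\Sigma)$ gives a free homotopy $g_s\circ\Lambda$ between $\Lambda$ and $g\circ\Lambda$ — and the Maslov index, as an intersection number with a co-oriented hypersurface whose singular set has codimension three, is invariant under free (not just based) homotopy, which is exactly what this step needs. The only cosmetic point: your remark that free and based classes agree "as the target is abelian" should refer to $\pi_1(L(\Sigma))$ being abelian, which you establish only afterwards; reordering removes even the appearance of circularity.
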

For the proof and more properties see~\cite{agrachev_moi}. Often we will just write $\Mi(\Lambda(t))$ for closed curves to emphasize, that the result does not depend on the choice of the Maslov train.

The given definition is very useful in many theoretical studies, but not very convenient for computations, since one needs to put the curve in a general position and verify that the boundary points are not in $\cM_\Pi$. To overcome this, one usually uses other symplectic invariants of Lagrangian planes and curves. We will need the Kashiwara index $\Ki(\Lambda_1,\Lambda_2,\Lambda_3)$ of a triple of Lagrangian planes $\Lambda_i\in L(\Sigma)$ and the Leray index $\Li(\tilde\Lambda_1,\tilde\Lambda_2)$ of two points $\tilde\Lambda_i$ in the universal cover $\widetilde{L(\Sigma)}$ to state and prove the main Morse index theorem, but for many intermediate steps it is more useful to use an index introduced in~\cite{agrachev_quadratic}.

To define it we take three Lagrangian planes $\Lambda_1,\Lambda_2, \Pi \in L(\Sigma)$ and define a quadratic form $q$ on $((\Lambda_1 + \Lambda_2) \cap \Pi)/(\Lambda_1\cap \Pi \cap \Lambda_2)$ as
\begin{equation}
\label{eq:form_index}
q(\lambda) = \sigma(\lambda_1,\lambda_2), \qquad \lambda = \lambda_1 + \lambda_2, \qquad \lambda_i \in \Lambda_i.
\end{equation}
\begin{definition}
The \emph{positive Maslov index} of a triple $(\Lambda_1,\Pi,\Lambda_2)$ is a half-integer number
\begin{align*}
\ind_\Pi(\Lambda_1,\Lambda_2) &= \ind^+ q + \frac{1}{2}\dim \ker q = \\
&= \ind^+ q + \frac{1}{2}\left( \dim\left( \Lambda_1\cap \Pi \right) + \dim\left( \Lambda_2\cap \Pi \right) \right) - \dim(\Lambda_1\cap\Lambda_2 \cap \Pi).
\end{align*}
\end{definition}

The positive Maslov index has many important properties. We list just a few and refer to~\cite[Pages 2684-2688]{agrachev_quadratic} for some others and the proofs. We note again that in~\cite{agrachev_quadratic} a different sign convention was used and therefore the negative Maslov index played the central role.
\begin{lemma}
\label{lemm:maslov} The positive Maslov index has the following properties for all $\Lambda_i,\Pi \in L(\Sigma)$
\begin{enumerate}
\item Explicit finite bounds
$$
0 \leq \ind_{\Pi}(\Lambda_1,\Lambda_2) \leq \frac{\dim \Sigma}{2};
$$
\item If $\Gamma \subset \Lambda_1\cap \Lambda_2$, we denote $\Pi^\Gamma = (\Pi\cap \Gamma^\angle) + \Gamma$, where is the skew-orthogonal complement with respect to the symplectic form $\sigma$ defined as
$$
\Gamma^\angle = \left\{\lambda \in \Sigma \,:\, \sigma(\lambda,\mu) = 0, \forall \mu \in \Gamma  \right\}.
$$
Then
$$
\ind_\Pi(\Lambda_1,\Lambda_2) = \ind_{\Pi^\Gamma}(\Lambda_1,\Lambda_2);
$$
\item Triangle inequality
$$
\ind_{\Pi}(\Lambda_1,\Lambda_3) \leq \ind_{\Pi}(\Lambda_1,\Lambda_2) + \ind_{\Pi}(\Lambda_2,\Lambda_3);
$$
\item A formula
$$
\ind_{\Pi}(\Lambda_1, \Pi) = \ind_{\Pi}(\Pi,\Lambda_1) = \frac{1}{2}\left( \frac{\dim \Sigma}{2} - \dim(\Lambda_1 \cap \Pi) \right).
$$
\end{enumerate}
\end{lemma}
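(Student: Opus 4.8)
The plan is to reduce all four assertions to standard facts about the symmetric form $q$ together with the behaviour of the positive index $\ind^+$ under symplectic reduction and addition of forms. Throughout write $2n=\dim\Sigma$ and let $W=((\Lambda_1+\Lambda_2)\cap\Pi)/(\Lambda_1\cap\Pi\cap\Lambda_2)$ be the domain of $q$. I would first record that $q$ is well defined and symmetric: decomposing $\lambda=\lambda_1+\lambda_2$, $\mu=\mu_1+\mu_2$ with $\lambda_i,\mu_i\in\Lambda_i$ and expanding $\sigma(\lambda,\mu)=0$ (both lie in the Lagrangian $\Pi$) while using $\sigma|_{\Lambda_i}=0$ shows that $b(\lambda,\mu):=\sigma(\lambda_1,\mu_2)$ is symmetric and independent of the chosen decompositions, and $q$ is its quadratic form. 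Next I would compute its radical, claiming $\operatorname{rad}b=(\Lambda_1\cap\Pi)+(\Lambda_2\cap\Pi)$. The inclusion $\supseteq$ is immediate. For $\subseteq$, if $\lambda=\lambda_1+\lambda_2$ lies in the radical then $\lambda_1$ is skew-orthogonal to $\Lambda_2\cap(\Lambda_1+\Pi)$, so taking complements and using that $\Pi,\Lambda_1,\Lambda_2$ are Lagrangian gives $\lambda_1\in(\Lambda_2\cap(\Lambda_1+\Pi))^\angle=\Lambda_2+(\Lambda_1\cap\Pi)$; substituting back and using $\lambda\in\Pi$ yields $\lambda\in(\Lambda_1\cap\Pi)+(\Lambda_2\cap\Pi)$. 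Since $\Lambda_1\cap\Lambda_2\cap\Pi\subseteq\operatorname{rad}b$, the kernel of the induced form on $W$ has dimension $\dim(\Lambda_1\cap\Pi)+\dim(\Lambda_2\cap\Pi)-2\dim(\Lambda_1\cap\Lambda_2\cap\Pi)$, which is exactly the identity between the two displayed expressions for $\ind_\Pi$.

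Properties (1) and (4) are then immediate. For (1), $\ind_\Pi\ge 0$ since both summands are non-negative, while $\ind^+q+\tfrac12\dim\ker q\le\dim W\le\dim\Pi=n$ gives the upper bound, using $\dim((\Lambda_1+\Lambda_2)\cap\Pi)\le n$. For (4) I set $\Lambda_2=\Pi$: then $(\Lambda_1+\Pi)\cap\Pi=\Pi$ and $\Lambda_1\cap\Pi\cap\Pi=\Lambda_1\cap\Pi$, so $\dim W=n-\dim(\Lambda_1\cap\Pi)$; moreover every $\lambda\in\Pi$ has the decomposition $\lambda=0+\lambda$ with $0\in\Lambda_1$ and $\lambda\in\Pi$, so $q\equiv 0$. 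Hence $\ind^+q=0$ and $\ind_\Pi(\Lambda_1,\Pi)=\tfrac12\dim\ker q=\tfrac12(n-\dim(\Lambda_1\cap\Pi))$; the equality with $\ind_\Pi(\Pi,\Lambda_1)$ holds because swapping the two arguments replaces $q$ by $-q$, which has the same index when $q\equiv 0$.

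For the reduction property (2) I would first check that $\Pi^\Gamma=(\Pi\cap\Gamma^\angle)+\Gamma$ is Lagrangian, this being the standard symplectic reduction of $\Pi$ along the isotropic $\Gamma$. The key point is that $\Gamma\subseteq\Lambda_1\cap\Lambda_2$ forces $\Lambda_1+\Lambda_2\subseteq\Gamma^\angle$, so the modular (Dedekind) law gives $(\Lambda_1+\Lambda_2)\cap\Pi^\Gamma=((\Lambda_1+\Lambda_2)\cap\Pi)+\Gamma$ and $\Lambda_1\cap\Lambda_2\cap\Pi^\Gamma=(\Lambda_1\cap\Lambda_2\cap\Pi)+\Gamma$. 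A second application of the modular law shows that the natural map $(\Lambda_1+\Lambda_2)\cap\Pi\to W^\Gamma$ has kernel exactly $\Lambda_1\cap\Lambda_2\cap\Pi$, hence induces an isomorphism $W\xrightarrow{\sim}W^\Gamma$; since every class has a representative inside $(\Lambda_1+\Lambda_2)\cap\Pi$, on which the two forms literally coincide, this isomorphism intertwines $q$ and $q^\Gamma$, and the indices agree.

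The triangle inequality (3) is the main difficulty. The clean case is when $\Lambda_1,\Lambda_2,\Lambda_3$ are all transversal to $\Pi$: fixing a Lagrangian complement $\Delta$ of $\Pi$ and writing each $\Lambda_i=\{(S_iy,y):y\in\Delta\}$ as the graph of a symmetric $S_i$, a direct computation gives $(\Lambda_i+\Lambda_j)\cap\Pi=\im(S_i-S_j)$, makes the kernel term vanish, and identifies $q_{ij}$ with the non-degenerate part of $\hat S_i-\hat S_j$, so that $\ind_\Pi(\Lambda_i,\Lambda_j)=\ind^+(\hat S_i-\hat S_j)$. The inequality then follows from the subadditivity $\ind^+(A+B)\le\ind^+A+\ind^+B$ of the positive index of symmetric forms (a dimension count on the intersection of maximal non-positive subspaces of $A$ and $B$), applied to $A=\hat S_1-\hat S_2$, $B=\hat S_2-\hat S_3$, since $A+B=\hat S_1-\hat S_3$. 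To treat the general case I would reduce away the intersections $\Lambda_i\cap\Pi$: first invoking property (2) with $\Gamma=\Lambda_1\cap\Lambda_2\cap\Pi$ to remove triple intersections, then performing a symplectic reduction by an isotropic subspace of $\Pi$ containing the remaining $\Lambda_i\cap\Pi$ to bring the configuration into the transversal situation. \emph{This reduction to transversal position is the step I expect to be the main obstacle}: one must check that it preserves all three indices while simultaneously making every $\Lambda_i$ transversal to the reduced $\Pi$, and the half-integer kernel corrections must be tracked throughout. An alternative that avoids choosing complements is to rewrite $\ind_\Pi=\tfrac12(\dim W+\sign q)$, identify $\sign q$ with the Kashiwara index $\Ki$ of the triple, and deduce (3) from the four-term cocycle identity for $\Ki$ together with the dimension formula established in the first step.
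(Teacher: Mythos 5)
First, a point of comparison: the paper does not prove this lemma at all --- it is stated with a pointer to~\cite{agrachev_quadratic} (with a warning that the sign conventions there are opposite, so the \emph{negative} Maslov index is central in that reference). Your argument therefore has to stand on its own, and most of it does. The preliminary computation is correct and is the right backbone: symmetry and well-definedness of $b$, the radical identity $\operatorname{rad} b = (\Lambda_1\cap\Pi)+(\Lambda_2\cap\Pi)$ (which indeed reconciles the two displayed expressions for $\ind_\Pi$), the bounds in (1), the modular-law bookkeeping in (2) --- where the hypothesis $\Gamma\subset\Lambda_1\cap\Lambda_2$ enters exactly as you use it, via $\Lambda_1+\Lambda_2\subset\Gamma^\angle$ --- and the observation $q\equiv 0$ in (4), including the sign remark about swapping arguments.

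The genuine gap is where you flagged it: the passage from the transversal case of (3) to the general case. Property (2) only licenses replacing $\Pi$ by $\Pi^\Gamma$ when $\Gamma$ is contained in the intersection of the \emph{two arguments of that particular index}; a single $\Gamma$ serving all three pairwise indices must lie in $\Lambda_1\cap\Lambda_2\cap\Lambda_3$, and reducing by such a $\Gamma$ does nothing to kill the intersections $\Lambda_i\cap\Pi$. Reducing instead by an isotropic subspace of $\Pi$ not contained in the $\Lambda_i$ genuinely changes the indices: already for $\dim\Sigma=2$ one has $\ind_\Pi(\Pi,\Lambda)=\tfrac12$ for any $\Lambda\neq\Pi$, while any reduction collapsing $\Pi$ sends all indices to $0$. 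So the primary route, as described, does not go through, and the half-integer corrections cannot be "tracked" around this obstruction.

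Your fallback route, however, actually closes the gap with tools you have already established, and you should promote it to the main argument. Writing $d_{ij}=\dim(\Lambda_i\cap\Lambda_j)$ and $T=\Lambda_1\cap\Lambda_2\cap\Lambda_3$, your identity $\ind_\Pi(\Lambda_i,\Lambda_j)=\tfrac12\bigl(\dim W_{ij}+\Ki(\Lambda_i,\Pi,\Lambda_j)\bigr)$ together with the count $\dim W_{ij}=n-d_{ij}$ (which follows from $(\Lambda_i+\Lambda_j)^\angle=\Lambda_i\cap\Lambda_j$) and the cocycle identity plus antisymmetry of $\Ki$ (Lemma~\ref{lemm:kashiwara}, which the paper quotes from~\cite{gosson,vergne}, so it is fair game) give
$$
2\bigl(\ind_\Pi(\Lambda_1,\Lambda_2)+\ind_\Pi(\Lambda_2,\Lambda_3)-\ind_\Pi(\Lambda_1,\Lambda_3)\bigr)
= n - d_{12}-d_{23}+d_{13} - \Ki(\Lambda_1,\Lambda_2,\Lambda_3).
$$
Now apply \emph{your own radical computation} to the triple with $\Lambda_2$ in the middle: the form defining $\Ki(\Lambda_1,\Lambda_2,\Lambda_3)$ lives on $(\Lambda_1+\Lambda_3)\cap\Lambda_2$, which has dimension $n-d_{13}+\dim T$ and radical $(\Lambda_1\cap\Lambda_2)+(\Lambda_2\cap\Lambda_3)$ of dimension $d_{12}+d_{23}-\dim T$, whence
$$
\Ki(\Lambda_1,\Lambda_2,\Lambda_3)\;\leq\;\rank q\;=\;n-d_{12}-d_{23}-d_{13}+2\dim T\;\leq\;n-d_{12}-d_{23}+d_{13},
$$
using $\dim T\leq d_{13}$. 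This refined bound (note that the crude $|\Ki|\leq n$ of Lemma~\ref{lemm:kashiwara} is \emph{not} sufficient when the pairwise intersections are nontrivial) makes the right-hand side of the first display nonnegative, proving (3) in full generality, with no transversality assumption and subsuming your graph computation with $\ind^+(\hat S_i-\hat S_j)$ entirely.
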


A similar invariant is the Kashiwara index of a triple of Lagrangian planes
\begin{definition}
The \emph{Kashiwara index} of the triple $(\Lambda_1,\Pi,\Lambda_2)$ is the signature of the form $q$:
$$
\Ki(\Lambda_1,\Pi,\Lambda_2) = \sign q.
$$
\end{definition}

\begin{lemma}
\label{lemm:kashiwara} 
The Kashiwara index has the following properties for all $\Lambda_i,\Pi \in L(\Sigma)$
\begin{enumerate}
\item Explicit finite bounds
$$
|\Ki(\Lambda_1,\Lambda_2,\Lambda_3)| \leq \frac{\dim \Sigma}{2};
$$
\item The cocyle property
$$
\Ki(\Lambda_2,\Lambda_3,\Lambda_4) - \Ki(\Lambda_1,\Lambda_3,\Lambda_4)  + \Ki(\Lambda_1,\Lambda_2,\Lambda_4) - \Ki(\Lambda_1,\Lambda_2,\Lambda_3) = 0 ;
$$
\item Antisymmetry
$$
\Ki(\Lambda_{p(1)},\Lambda_{p(2)},\Lambda_{p(3)}) = (-1)^{\sign (p)}\Ki(\Lambda_1,\Lambda_2,\Lambda_3),
$$
where $p$ is a permutation of $\{1,2,3\}$;
\item Relation with the positive Maslov index
$$
-\Ki(\Lambda_1,\Pi,\Lambda_2) + 2\ind_{\Pi}(\Lambda_1,\Lambda_2) + \dim (\Lambda_1 \cap \Lambda_2) = \frac{\dim(\Sigma)}{2}.
$$
\end{enumerate}
\end{lemma}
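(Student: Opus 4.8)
The plan is to dispatch the four items in increasing order of difficulty, drawing as much as possible from the defining form $q(\lambda)=\sigma(\lambda_1,\lambda_2)$ on $(\Lambda_1+\Lambda_2)\cap\Pi$, and passing to the symmetric presentation of the index only for the genuinely permutation-symmetric assertions. Item (1) is immediate: $\Ki(\Lambda_1,\Pi,\Lambda_2)=\sign q$ and $q$ lives on the subspace $(\Lambda_1+\Lambda_2)\cap\Pi\subset\Pi$, whose dimension is at most $\dim\Pi=\tfrac{\dim\Sigma}{2}$; since the absolute value of a signature is bounded by the dimension of its domain, the bound follows at once.

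For item (4) I would compare $q$ with the form $q_0$ on the quotient $\big((\Lambda_1+\Lambda_2)\cap\Pi\big)/(\Lambda_1\cap\Pi\cap\Lambda_2)$ that defines $\ind_\Pi$. First I check that $\Lambda_1\cap\Lambda_2\cap\Pi$ lies in the radical of $q$: for such a $\lambda$ one takes the decomposition $\lambda=\lambda+0$, so that the associated bilinear form reads $b(\lambda,\mu)=\sigma(\lambda,\mu_2)$ for any $\mu=\mu_1+\mu_2$, and this vanishes because $\lambda\in\Lambda_2$ and $\Lambda_2$ is isotropic. Hence $q$ descends to $q_0$, giving $\sign q=\sign q_0$ and $\dim\ker q=\dim\ker q_0+\dim(\Lambda_1\cap\Lambda_2\cap\Pi)$. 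Combining this with $\ind_\Pi(\Lambda_1,\Lambda_2)=\ind^+q_0+\tfrac12\dim\ker q_0$, the left-hand side of (4) collapses to
\[
\dim\big((\Lambda_1+\Lambda_2)\cap\Pi\big)-\dim(\Lambda_1\cap\Lambda_2\cap\Pi)+\dim(\Lambda_1\cap\Lambda_2).
\]
It then remains to prove the purely dimensional identity equating this with $\tfrac{\dim\Sigma}{2}$. I would obtain it from the symplectic duality $(U+V)^\angle=U^\angle\cap V^\angle$, the self-duality $\Lambda^\angle=\Lambda$ of Lagrangians, and $\dim U^\angle=\dim\Sigma-\dim U$: these give $\dim\big((\Lambda_1+\Lambda_2)+\Pi\big)=\dim\Sigma-\dim(\Lambda_1\cap\Lambda_2\cap\Pi)$, and feeding this into the Grassmann formula for $(\Lambda_1+\Lambda_2)\cap\Pi$ turns the displayed expression into $\tfrac{\dim\Sigma}{2}$.

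For the antisymmetry (3), the transposition of the two \emph{outer} arguments is free: swapping $\Lambda_1$ and $\Lambda_2$ replaces $q(\lambda)=\sigma(\lambda_1,\lambda_2)$ by $\sigma(\lambda_2,\lambda_1)=-q(\lambda)$ on the unchanged domain, so the signature flips sign. The transposition that moves the distinguished middle plane cannot be read off the domain-dependent form $q$, so here I would pass to the symmetric Kashiwara form $Q_{A,B,C}(a,b,c)=\sigma(a,b)+\sigma(b,c)+\sigma(c,a)$ on $A\oplus B\oplus C$ and prove the reduction lemma $\sign Q_{\Lambda_1,\Pi,\Lambda_2}=\sign q$ by computing the radical of $Q$ and identifying the induced nondegenerate form on the reduced space with $q$. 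Since $Q$ is visibly alternating under every transposition of its arguments, the reduction lemma upgrades the trivial outer antisymmetry to full $S_3$-antisymmetry.

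Finally, the cocycle relation (2) is the main obstacle, and I expect to reach it only through the symmetric model $Q$. The delicacy is exactly that the four terms are built from four \emph{different} reduced spaces $(\Lambda_i+\Lambda_j)\cap\Lambda_k$, so the identity is invisible at the level of the individual forms $q$; it is the symmetric presentation that renders all four contributions comparable inside $\bigoplus_i\Lambda_i$, after which the vanishing is the classical statement that the triple signature is a $2$-cocycle (Wall's non-additivity of the signature), provable by a Witt-cancellation bookkeeping argument or quotable from~\cite{agrachev_quadratic}. Everything else — items (1), the outer half of (3), and (4) — is the elementary linear symplectic algebra sketched above.
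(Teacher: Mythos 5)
Your proposal is correct in substance, but note first that the paper does not actually prove this lemma: it quotes \cite{gosson} and \cite{vergne} for properties 1--3 and \cite{agrachev_quadratic} for property 4, so the comparison is really with that literature rather than with an argument in the text. Relative to this, you do strictly more. Item (1) is the same trivial bound. Your proof of item (4) is a clean, self-contained replacement for the citation to \cite{agrachev_quadratic}: the descent step is right (for $\lambda\in\Lambda_1\cap\Lambda_2\cap\Pi$, using the decomposition $\lambda=\lambda+0$ the polar form is $b(\lambda,\mu)=\tfrac12\sigma(\lambda,\mu_2)=0$ by isotropy of $\Lambda_2$, so this subspace lies in the radical, whence $\sign q=\sign q_0$ and $\dim\ker q=\dim\ker q_0+\dim(\Lambda_1\cap\Lambda_2\cap\Pi)$), and the dimensional identity checks out: from $\left((\Lambda_1+\Lambda_2)+\Pi\right)^\angle=\Lambda_1\cap\Lambda_2\cap\Pi$ one gets $\dim\left((\Lambda_1+\Lambda_2)\cap\Pi\right)=\tfrac{\dim\Sigma}{2}-\dim(\Lambda_1\cap\Lambda_2)+\dim(\Lambda_1\cap\Lambda_2\cap\Pi)$, which collapses the left-hand side of (4) to $\dim\Sigma/2$ exactly as you claim. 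The outer transposition in (3) is likewise fine, since the domain $(\Lambda_1+\Lambda_2)\cap\Pi$ is symmetric in $\Lambda_1,\Lambda_2$ while $q$ flips sign.

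The two places where you stop proving and start quoting --- the reduction lemma $\sign Q_{\Lambda_1,\Pi,\Lambda_2}=\sign q$ and the cocycle property of the symmetric triple signature --- are precisely the content of \cite{gosson} and \cite{vergne}: there the Kashiwara index is \emph{defined} via the symmetric form $Q$ on $\Lambda_1\oplus\Pi\oplus\Lambda_2$, and the restricted form $q$ emerges from computing the radical of $Q$, so your reduction lemma is their comparison statement read backwards. This is a legitimate architecture, and it matches the paper's own level of rigor (pure citation) on exactly those two points while making (1), (4), and half of (3) elementary and self-contained. Two small things to tighten: you should record that $q$ is well defined, i.e.\ independent of the decomposition $\lambda=\lambda_1+\lambda_2$ when $\Lambda_1\cap\Lambda_2\neq\{0\}$ (this follows from isotropy of both planes), and the sign conventions in the reduction lemma deserve one explicit verification --- e.g.\ on the paper's $\R^2$ example, where both $\sign Q$ and $\sign q$ can be computed by hand and do agree --- since triple-index conventions in the literature differ by permutations and overall signs.
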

The proves of the first three properties can be found in~\cite[Section 1.4]{gosson} or~\cite{vergne}[Section 1.5]. The last one is proved in~\cite[Lemma 5]{agrachev_quadratic}.

Let us try to understand what are these indices geometrically in the simplest case, when $\Sigma = \R^2$ (see picture~\ref{fig:maslov}). Fix some Darboux coordinates $(p,q)$, s.t. $\Pi = \{(p,0)\}$. Then all the Lagrangian planes close to $\Pi$ are parametrized by a single parameter $S$ as $(p,S p)$. Consider a curve of Lagrangian planes $\Lambda(t): [-1,1]\to L(\Sigma)$ s.t. $\Lambda_0 = \Pi$. Then we easily compute, the derivative
$$
\dot \Lambda_0(\lambda) = \sigma\left( (p,0),(p,\dot S_0 p) \right) = \dot S_0 p^2.
$$
Thus when the curve $\Lambda(t)$ crosses $\Pi$ in the clockwise direction, we add $+1$ to the Maslov index, and $-1$ if it crosses clockwise.

Consider now $\ind_\Pi(\Lambda_{-1},\Lambda_1)$ and $\Ki(\Lambda_{-1},\Pi,\Lambda_1)$. By working out the definitions one can check that the values of both indices depend only on the relative positions of $\Lambda_{-1},\Pi,\Lambda_1$, where we have four situations, some of which are depicted in Figure~\ref{fig:maslov}:
\begin{enumerate}
\item if $\Lambda_{-1}=\Lambda_1 = \Pi$, then $\ind_\Pi(\Lambda_{-1},\Lambda_1) = \Ki(\Lambda_{-1},\Pi,\Lambda_1) = 0$;
\item if $\Lambda_{-1}=\Pi$ or $\Lambda_1 = \Pi$, then $\ind_\Pi(\Lambda_{-1},\Lambda_1) =1/2 $ and  $\Ki(\Lambda_{-1},\Pi,\Lambda_1) = 0$;
\item if by rotating $\Lambda_{-1}$ in the clockwise direction we meet $\Lambda_1$ before $\Pi$, then $$\ind_\Pi(\Lambda_{-1},\Lambda_1) = 0 \qquad \text{ and } \qquad \Ki(\Lambda_{-1},\Pi,\Lambda_1) = -1;$$
\item if by rotating $\Lambda_{-1}$ in the clockwise direction we meet $\Pi$ before $\Lambda_1$, then $$\ind_\Pi(\Lambda_{-1},\Lambda_1) = 1 \qquad \text{ and } \qquad \Ki(\Lambda_{-1},\Pi,\Lambda_1) = 1.$$
\end{enumerate}

The fact that these indices depend only on the relative positions of the Lagrangian planes is a consequence of the following statement.
\begin{proposition}[\cite{gosson}]
\label{prop:homotopy}
The Kashiwara index $\Ki(\Lambda_1,\Lambda_2,\Lambda_3)$ and the positive Maslov index $\ind_{\Lambda_2}(\Lambda_1,\Lambda_3)$ are constant on the set
$$
\{(\Lambda_1,\Lambda_2,\Lambda_3)\,:\, \dim (\Lambda_1 \cap \Lambda_2) =k_{1}, \dim (\Lambda_2 \cap \Lambda_3) =k_{2},\dim (\Lambda_3 \cap \Lambda_1) =k_{3}\}\subset L(\Sigma)^3,
$$
where $k_i$ are some constants.
\end{proposition}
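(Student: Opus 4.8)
The plan is to reduce both assertions to a single statement about the Kashiwara index, and then to realize $\Ki$ as the signature of a quadratic form whose rank is governed purely by the three numbers $k_1,k_2,k_3$. First I would invoke property 4 of Lemma~\ref{lemm:kashiwara} with $\Pi = \Lambda_2$, which gives
\begin{equation*}
2\ind_{\Lambda_2}(\Lambda_1,\Lambda_3) = \Ki(\Lambda_1,\Lambda_2,\Lambda_3) - \dim(\Lambda_1\cap\Lambda_3) + \frac{\dim\Sigma}{2}.
\end{equation*}
Since $\dim(\Lambda_1\cap\Lambda_3)=k_3$ and $\dim\Sigma/2$ are constant on the prescribed set, the positive Maslov index $\ind_{\Lambda_2}(\Lambda_1,\Lambda_3)$ is locally constant there if and only if $\Ki(\Lambda_1,\Lambda_2,\Lambda_3)$ is. Hence it suffices to treat the Kashiwara index.

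Next I would represent $\Ki(\Lambda_1,\Lambda_2,\Lambda_3)$ as the signature of the symmetric Wall form
\begin{equation*}
Q(x_1,x_2,x_3) = \sigma(x_1,x_2)+\sigma(x_2,x_3)+\sigma(x_3,x_1), \qquad x_i\in\Lambda_i,
\end{equation*}
on $\Lambda_1\oplus\Lambda_2\oplus\Lambda_3$; that its signature equals the $\Ki$ of the definition is classical (see~\cite{gosson,vergne}). The point of preferring $Q$ over the form of the definition, which lives on $(\Lambda_1+\Lambda_3)\cap\Lambda_2$, is that the domain of $Q$ has the fixed dimension $\tfrac32\dim\Sigma$, whereas $\dim((\Lambda_1+\Lambda_3)\cap\Lambda_2)$ is controlled by $\dim(\Lambda_1\cap\Lambda_2\cap\Lambda_3)$, which is \emph{not} determined by $k_1,k_2,k_3$ alone. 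The main computation is then the kernel identity
\begin{equation*}
\ker Q = \{(x,x,0):x\in\Lambda_1\cap\Lambda_2\}\oplus\{(0,y,y):y\in\Lambda_2\cap\Lambda_3\}\oplus\{(z,0,z):z\in\Lambda_1\cap\Lambda_3\},
\end{equation*}
so that $\dim\ker Q = k_1+k_2+k_3$. The inclusion $\supseteq$ and the directness of the sum are immediate. For $\subseteq$ I would note that $(x_1,x_2,x_3)\in\ker Q$ is equivalent to $x_1-x_2\in\Lambda_3$, $x_2-x_3\in\Lambda_1$, $x_3-x_1\in\Lambda_2$; these force $\alpha=x_1+x_2-x_3$, $\beta=x_2+x_3-x_1$, $\gamma=x_3+x_1-x_2$ to lie in $\Lambda_1\cap\Lambda_2$, $\Lambda_2\cap\Lambda_3$, $\Lambda_3\cap\Lambda_1$ respectively, and then $(x_1,x_2,x_3)=\tfrac12\bigl[(\alpha,\alpha,0)+(0,\beta,\beta)+(\gamma,0,\gamma)\bigr]$ gives the decomposition.

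With the kernel computed I would conclude by a continuity argument. Trivializing the three planes by continuously varying bases near a point of the set, one pulls $Q$ back to a continuous family of symmetric forms on a fixed real vector space of dimension $\tfrac32\dim\Sigma$; on the set both its domain dimension and, by the identity above, its kernel dimension $k_1+k_2+k_3$ are constant, so its rank is constant. A continuous family of symmetric forms of constant rank has locally constant signature, because no eigenvalue can cross zero without enlarging the kernel. Therefore $\Ki$ is locally constant on the set, and by the reduction above so is $\ind_{\Lambda_2}$. As the two-dimensional discussion preceding the statement already shows, such a stratum may be disconnected with different index values on its components, so the content of the statement is exactly this local constancy, i.e.\ constancy on each connected component.

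The hard part will be the exactness of the kernel formula, namely the inclusion $\ker Q\subseteq K_{12}+K_{23}+K_{13}$: it is precisely here that one uses that each $\Lambda_i$ is Lagrangian, so that the three difference-membership conditions close up into the combinations $\alpha,\beta,\gamma$ landing in pairwise intersections, and that this holds regardless of the triple intersection $\Lambda_1\cap\Lambda_2\cap\Lambda_3$. Everything else — the reduction to $\Ki$ and the local constancy of the signature at constant rank — is soft.
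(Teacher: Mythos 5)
Your proof is correct and follows essentially the same route as the source the paper cites for this statement (the paper gives no proof of its own, deferring to~\cite{gosson}): reduce $\ind_{\Lambda_2}(\Lambda_1,\Lambda_3)$ to $\Ki(\Lambda_1,\Lambda_2,\Lambda_3)$ via property 4 of Lemma~\ref{lemm:kashiwara}, realize $\Ki$ as the signature of the Wall form $Q$ on $\Lambda_1\oplus\Lambda_2\oplus\Lambda_3$, verify $\dim\ker Q = k_1+k_2+k_3$ through the $\alpha,\beta,\gamma$ decomposition (which I checked and is exact, using $\Lambda_i^\angle=\Lambda_i$), and conclude by continuity of eigenvalues at constant rank. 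Your closing remark is also well taken: the literal wording ``constant on the set'' must be read as constancy on connected components, since the paper's own two-dimensional discussion (cases 3 and 4, both with $k_1=k_2=k_3=0$ and $\Ki=\mp 1$) shows the stratum is disconnected with different values on its components, and local constancy is exactly what the paper uses, e.g.\ in the proof of the second part of Theorem~\ref{thm:main}.
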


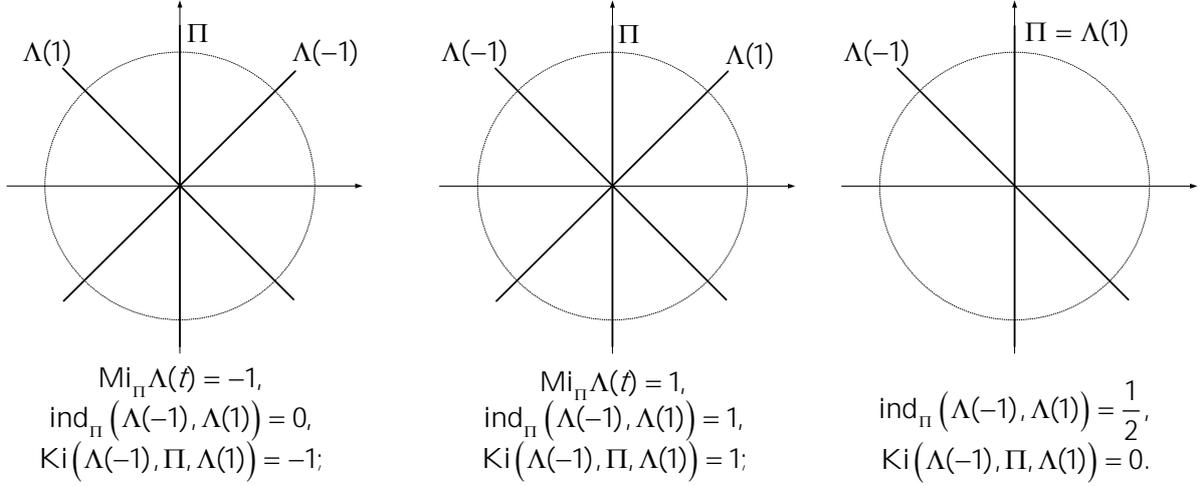
\begin{figure}[h]
\begin{center}

\resizebox{13.5cm}{6cm}{

\tikzset{every picture/.style={line width=0.75pt}} %set default line width to 0.75pt        

\begin{tikzpicture}[x=0.75pt,y=0.75pt,yscale=-1,xscale=1]
%uncomment if require: \path (0,345); %set diagram left start at 0, and has height of 345

%Shape: Ellipse [id:dp6500317971054477] 
\draw  [dash pattern={on 0.84pt off 2.51pt}] (38.43,151.53) .. controls (38.43,112.45) and (70.73,80.76) .. (110.57,80.76) .. controls (150.41,80.76) and (182.71,112.45) .. (182.71,151.53) .. controls (182.71,190.62) and (150.41,222.3) .. (110.57,222.3) .. controls (70.73,222.3) and (38.43,190.62) .. (38.43,151.53) -- cycle ;
%Straight Lines [id:da88901798627624] 
\draw    (109.51,40.65) -- (110.79,235.99) ;
\draw [shift={(109.49,37.65)}, rotate = 89.63] [fill={rgb, 255:red, 0; green, 0; blue, 0 }  ][line width=0.08]  [draw opacity=0] (10.72,-5.15) -- (0,0) -- (10.72,5.15) -- (7.12,0) -- cycle    ;
%Straight Lines [id:da8270829153769577] 
\draw    (196.5,151.24) -- (23.9,150.55) ;
\draw [shift={(199.5,151.25)}, rotate = 180.23] [fill={rgb, 255:red, 0; green, 0; blue, 0 }  ][line width=0.08]  [draw opacity=0] (10.72,-5.15) -- (0,0) -- (10.72,5.15) -- (7.12,0) -- cycle    ;
%Straight Lines [id:da5823611264973021] 
\draw [line width=1.5]    (39.94,219.94) -- (180.7,79.99) ;
%Straight Lines [id:da7139611573271021] 
\draw [line width=1.5]    (180.7,221.1) -- (39.72,79.28) ;
%Straight Lines [id:da3582300882347711] 
\draw [line width=1.5]    (109.49,54.23) -- (110.93,247.8) ;
%Shape: Ellipse [id:dp138853049724311] 
\draw  [dash pattern={on 0.84pt off 2.51pt}] (248.76,151.53) .. controls (248.76,112.45) and (281.06,80.76) .. (320.91,80.76) .. controls (360.75,80.76) and (393.05,112.45) .. (393.05,151.53) .. controls (393.05,190.62) and (360.75,222.3) .. (320.91,222.3) .. controls (281.06,222.3) and (248.76,190.62) .. (248.76,151.53) -- cycle ;
%Straight Lines [id:da6750320741085043] 
\draw    (319.85,40.65) -- (321.12,235.99) ;
\draw [shift={(319.83,37.65)}, rotate = 89.63] [fill={rgb, 255:red, 0; green, 0; blue, 0 }  ][line width=0.08]  [draw opacity=0] (10.72,-5.15) -- (0,0) -- (10.72,5.15) -- (7.12,0) -- cycle    ;
%Straight Lines [id:da3463001064417406] 
\draw    (407,150.75) -- (234.23,150.55) ;
\draw [shift={(410,150.75)}, rotate = 180.07] [fill={rgb, 255:red, 0; green, 0; blue, 0 }  ][line width=0.08]  [draw opacity=0] (10.72,-5.15) -- (0,0) -- (10.72,5.15) -- (7.12,0) -- cycle    ;
%Straight Lines [id:da7045954764068787] 
\draw [line width=1.5]    (250.27,219.94) -- (391.03,79.99) ;
%Straight Lines [id:da7756056982492452] 
\draw [line width=1.5]    (391.03,221.1) -- (250.06,79.28) ;
%Straight Lines [id:da07968394762489583] 
\draw [line width=1.5]    (319.83,54.23) -- (321.26,247.8) ;
%Shape: Ellipse [id:dp6952494795810018] 
\draw  [dash pattern={on 0.84pt off 2.51pt}] (459.76,151.53) .. controls (459.76,112.45) and (492.06,80.76) .. (531.91,80.76) .. controls (571.75,80.76) and (604.05,112.45) .. (604.05,151.53) .. controls (604.05,190.62) and (571.75,222.3) .. (531.91,222.3) .. controls (492.06,222.3) and (459.76,190.62) .. (459.76,151.53) -- cycle ;
%Straight Lines [id:da23398374809300093] 
\draw    (530.85,40.65) -- (532.12,235.99) ;
\draw [shift={(530.83,37.65)}, rotate = 89.63] [fill={rgb, 255:red, 0; green, 0; blue, 0 }  ][line width=0.08]  [draw opacity=0] (10.72,-5.15) -- (0,0) -- (10.72,5.15) -- (7.12,0) -- cycle    ;
%Straight Lines [id:da8623725580286175] 
\draw    (618.3,150.75) -- (445.23,150.55) ;
\draw [shift={(621.3,150.75)}, rotate = 180.07] [fill={rgb, 255:red, 0; green, 0; blue, 0 }  ][line width=0.08]  [draw opacity=0] (10.72,-5.15) -- (0,0) -- (10.72,5.15) -- (7.12,0) -- cycle    ;
%Straight Lines [id:da23413465053642168] 
\draw [line width=1.5]    (602.03,221.1) -- (461.06,79.28) ;
%Straight Lines [id:da45111168854953876] 
\draw [line width=1.5]    (530.83,54.23) -- (532.26,247.8) ;

% Text Node
\draw (114.83,54.23) node [anchor=north west][inner sep=0.75pt]    {$\Pi $};
% Text Node
\draw (158.7,58.39) node [anchor=north west][inner sep=0.75pt]    {$\Lambda ( -1)$};
% Text Node
\draw (21.03,57.72) node [anchor=north west][inner sep=0.75pt]    {$\Lambda ( 1)$};
% Text Node
\draw (325.17,54.23) node [anchor=north west][inner sep=0.75pt]    {$\Pi $};
% Text Node
\draw (375.7,58.39) node [anchor=north west][inner sep=0.75pt]    {$\Lambda ( 1)$};
% Text Node
\draw (229.37,58.05) node [anchor=north west][inner sep=0.75pt]    {$\Lambda ( -1)$};
% Text Node
\draw (536.17,54.23) node [anchor=north west][inner sep=0.75pt]    {$\Pi =\Lambda ( 1)$};
% Text Node
\draw (440.37,58.05) node [anchor=north west][inner sep=0.75pt]    {$\Lambda ( -1)$};
% Text Node
\draw (55.83,256.57) node [anchor=north west][inner sep=0.75pt]    {$Mi_{\Pi } \Lambda \ =\ -1,$};
% Text Node
\draw (24.17,278.07) node [anchor=north west][inner sep=0.75pt]    {$ind_{\Pi }( \Lambda ( -1) ,\Lambda ( 1)) \ =\ 0,$};
% Text Node
\draw (13.17,300.07) node [anchor=north west][inner sep=0.75pt]    {$Ki( \Lambda ( -1) ,\Pi ,\Lambda ( 1)) \ =\ -1;$};
% Text Node
\draw (270.83,255.9) node [anchor=north west][inner sep=0.75pt]    {$Mi_{\Pi } \Lambda \ =\ 1,$};
% Text Node
\draw (233.17,277.4) node [anchor=north west][inner sep=0.75pt]    {$ind_{\Pi }( \Lambda ( -1) ,\Lambda ( 1)) \ =\ 1,$};
% Text Node
\draw (231.17,299.4) node [anchor=north west][inner sep=0.75pt]    {$Ki( \Lambda ( -1) ,\Pi ,\Lambda ( 1)) \ =\ 1;$};
% Text Node
\draw (434.17,268.4) node [anchor=north west][inner sep=0.75pt]    {$ind_{\Pi }( \Lambda ( -1) ,\Lambda ( 1)) \ =\ 1/2,$};
% Text Node
\draw (443.17,294.4) node [anchor=north west][inner sep=0.75pt]    {$Ki( \Lambda ( -1) ,\Pi ,\Lambda ( 1)) \ =\ 0.$};

\end{tikzpicture}

 }
\end{center} 
 
\caption{The Kashiwara and the positive Maslov indices in $\R^2$}
\label{fig:maslov}
\end{figure}

To state precisely what is the relation between the indices $\Mi$, $\Ki$ and $\ind$ we need the following definition
\begin{definition}
A curve $\Lambda(t)$ is called \emph{simple} if there exists $\Delta \in L(\Sigma)$, s.t. $\Lambda(t) \in \Delta^\pitchfork$, i.e. it is entirely contained in a single affine coordinate chart.
\end{definition}
\begin{proposition}[\cite{agrachev_quadratic}, Proposition A6]
Let $\Lambda(t)$, $t\in[0,1]$ be a continuous curve, s.t. there exists $\Delta \in L(\Sigma)$, for which $\Lambda(t) \cap \Delta = \Pi \cap \Delta = \{0\}$. Then
$$
\Mi_\Pi( \Lambda(t)) = \frac{1}{2}\left( \Ki(\Delta,\Lambda_0,\Pi) - \Ki(\Delta,\Lambda_1,\Pi) \right).
$$
\end{proposition}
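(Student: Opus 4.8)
The plan is to trivialise the geometry by passing to the affine chart determined by $\Delta$, where the whole curve becomes a family of symmetric matrices, and then to read off both sides from that family. Since $\Pi,\Delta\in L(\Sigma)$ are transversal we have $\Sigma=\Pi\oplus\Delta$, so I would fix a symplectic basis adapted to this splitting with $\Pi=\R^n\times\{0\}$, $\Delta=\{0\}\times\R^n$ and $\sigma((a,b),(c,d))=\langle a,d\rangle-\langle b,c\rangle$. Because $\Lambda(t)\cap\Delta=\{0\}$ for every $t$, each $\Lambda(t)$ sits in the chart $\Delta^\pitchfork$ and is the graph $\Lambda(t)=\{(x,S(t)x):x\in\R^n\}$ of a continuous family of symmetric matrices $S(t)$. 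In this picture $\Lambda(t)\cap\Pi\neq\{0\}$ exactly when $S(t)$ is singular, and $\dim(\Lambda(t)\cap\Pi)=\dim\ker S(t)$, so the crossings of $\cM_\Pi$ are precisely the singular times of $S$.

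First I would compute the right-hand side. As $\Pi\oplus\Delta=\Sigma$, the form defining $\Ki(\Delta,\Lambda_i,\Pi)$ lives on $(\Delta+\Pi)\cap\Lambda_i=\Lambda_i$; writing $\lambda=(x,S_ix)=(x,0)+(0,S_ix)$ for its $\Pi$- and $\Delta$-components and applying the defining formula $q(\lambda)=\sigma(\lambda_\Delta,\lambda_\Pi)$ gives $q(\lambda)=\sigma((0,S_ix),(x,0))=-x^\top S_i x$. Hence $\Ki(\Delta,\Lambda_i,\Pi)=\sign(-S_i)=-\sign S_i$, so the right-hand side equals $\tfrac12(\sign S_1-\sign S_0)$. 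It remains to show $\Mi_\Pi(\Lambda(t))=\tfrac12(\sign S_1-\sign S_0)$.

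For the left-hand side I would use the identification $T_\Lambda L(\Sigma)\cong\Sym(\Lambda)$. At a crossing time $t_0$ and $0\neq\lambda=(x,0)\in\Lambda(t_0)\cap\Pi$ (so $S(t_0)x=0$), the curve $\lambda(s)=(x,S(s)x)$ yields $\dot\Lambda(t_0)(\lambda)=\sigma((x,0),(0,\dot S(t_0)x))=x^\top\dot S(t_0)x$, which is exactly the crossing (velocity) form of $S$ on $\ker S(t_0)$, and the co-orientation of $\cM_\Pi^1$ is positive precisely when this is positive. Both sides depend only on the endpoints $\Lambda_0,\Lambda_1$: the right-hand side manifestly, and the left-hand side because the Maslov index is a homotopy invariant rel endpoints and $\Sym(\R^n)$ is convex, so any two curves in the chart with the same endpoints are homotopic rel endpoints. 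I may therefore replace $\Lambda(t)$ by a smooth curve in general position whose interior crossings are simple and transversal. At such a crossing a single eigenvalue $\mu(t)$ of $S(t)$ passes through $0$; differentiating $Sx=\mu x$ and pairing with $x$ gives $x^\top\dot S x=\dot\mu\,|x|^2$, so a positive co-orientation ($+1$ to the index) corresponds to $\dot\mu>0$, i.e.\ to the signature jumping by $+2$, and a negative co-orientation to a jump of $-2$. Summing over the interior crossings produces $\sign S_1-\sign S_0=2\,\Mi_\Pi(\Lambda(t))$, which is the assertion.

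The hard part will be the bookkeeping at the endpoints, since the statement allows $\Lambda_0$ or $\Lambda_1$ to lie on $\cM_\Pi$ (i.e.\ $S_0$ or $S_1$ singular), in which case $\tfrac12(\sign S_1-\sign S_0)$ is genuinely a half-integer and the intersection number must be read with the half-weight convention at the endpoints. I would first establish the identity when the endpoints are transversal to $\Pi$, where every contribution is an honest integer and the argument above is complete, and then extend to the general case by choosing the generic representative so that the crossing forms $\dot S(0)|_{\ker S_0}$ and $\dot S(1)|_{\ker S_1}$ are nondegenerate, and checking that each endpoint contributes the half-weight $\tfrac12\sign(\dot S(\cdot)|_{\ker})$. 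One verifies directly that at the initial time $\sign S(0^+)-\sign S_0$ equals this crossing-form signature (not twice it, since at $t=0$ the relevant eigenvalues sit at $0$), and symmetrically at the final time, so that the half-weights $\tfrac12\sigma_0$ and $\tfrac12\sigma_1$ assemble with the full interior jumps into exactly $\tfrac12(\sign S_1-\sign S_0)$. No higher strata interfere because $\cM_\Pi^2$ has codimension three, so the general-position reduction is legitimate.
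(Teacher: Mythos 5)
The paper does not actually prove this proposition --- it is imported from~\cite{agrachev_quadratic} without proof --- so there is no internal argument to compare against; judged on its own merits, your proof is correct and is the standard chart/spectral-flow argument. Your two key computations are right and match the paper's conventions: since $\Delta+\Pi=\Sigma$, the Kashiwara form of the triple $(\Delta,\Lambda_i,\Pi)$ lives on all of $\Lambda_i$, and for $\lambda=(x,S_ix)$ the decomposition $\lambda=(0,S_ix)+(x,0)$ gives $q(\lambda)=\sigma\bigl((0,S_ix),(x,0)\bigr)=-x^{T}S_ix$, hence $\Ki(\Delta,\Lambda_i,\Pi)=-\sign S_i$; and the velocity form $\dot\Lambda(t_0)(\lambda)=x^{T}\dot S(t_0)x$ on $\ker S(t_0)$ reproduces exactly the paper's coorientation of $\cM_\Pi^1$. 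The convexity of the chart $\Delta^\pitchfork\cong\Sym(\R^n)$ legitimately reduces the left-hand side to a smooth path in general position, where each simple transversal crossing changes $\sign S$ by $\pm2$ while contributing $\pm1$ to the intersection number, and the codimension-three bound on $\cM_\Pi^2$ justifies the perturbation. Your endpoint bookkeeping is also correct: when an endpoint lies on $\cM_\Pi$ the relevant eigenvalues sit at zero, so the one-sided signature step is $\sign\bigl(\dot S|_{\ker}\bigr)$ rather than twice it, and the half-weights $\tfrac12\sign\bigl(\dot S(0)|_{\ker S_0}\bigr)$, $\tfrac12\sign\bigl(\dot S(1)|_{\ker S_1}\bigr)$ assemble with the interior jumps into $\tfrac12(\sign S_1-\sign S_0)$; this half-weight count is invariant under homotopies rel endpoints, so the choice of generic representative is immaterial. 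The one point you should make explicit rather than leave as an afterthought: the paper's own definition of $\Mi_\Pi$ is stated only as an intersection number, with the remark that one must ``verify that the boundary points are not in $\cM_\Pi$,'' so the endpoint convention is left implicit there; your half-weight convention is precisely what makes the half-integer right-hand side meaningful when $\Lambda_0$ or $\Lambda_1$ meets $\Pi$, and it should be declared as part of the statement. When both endpoints are transversal to $\Pi$, every term is an integer and the first half of your argument is already a complete proof.
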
 

Any two given points $\Lambda_1,\Lambda_2 \in L(\Sigma)$ can be joined by a simple monotone curve. It is easy to see this using an affine chart on the Grassmanian. So it makes sense to reformulate this result for a closed monotone curve $\Lambda(t)$.
\begin{proposition}[\cite{agrachev_quadratic},Proposition 2]
\label{prop:maslov}
Suppose that $\Lambda(t)$, $t\in[0,1]$ is a closed continuous monotone curve, $0 =t_0 < t_1 < t_2 <... < t_N  = 1$ is a partition of $[0,1]$ and $\Lambda_i = \Lambda(t_i)$. Then one has the estimate
$$
\Mi(\Lambda(t)) \geq \sum_{i= 0}^N \ind_\Pi(\Lambda_i,\Lambda_{i+1}) ,
$$
where $\Lambda_{N+1} = \Lambda_0$. Moreover if all pieces $\Lambda(t)|_{[t_{i},t_{i+1}]}$ are simple, i.e. there exist $\Delta_i\in L(\Sigma)$, s.t. $\Delta \cap \Lambda(t)|_{[t_i,t_{i+1}]} = \{0\}$, then we have an equality
$$
\Mi(\Lambda(t)) = \sum_{i= 0}^N \ind_\Pi(\Lambda_i,\Lambda_{i+1}) = \frac{1}{2}\sum_{i= 0}^N \left( \Ki(\Delta_i, \Lambda_{i}, \Pi) -  \Ki(\Delta_i, \Lambda_{i+1}, \Pi)\right).
$$

\end{proposition}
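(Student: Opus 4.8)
The plan is to reduce everything to the single-chart formula proved in the preceding proposition --- that a curve lying in one affine chart $\Delta^\pitchfork$ with $\Pi\in\Delta^\pitchfork$ satisfies $\Mi_\Pi(\Lambda(t))=\tfrac12\left(\Ki(\Delta,\Lambda_0,\Pi)-\Ki(\Delta,\Lambda_1,\Pi)\right)$ --- and then to repackage the result using the algebraic identities of Lemmas~\ref{lemm:maslov} and~\ref{lemm:kashiwara}. I would first establish the equality under the simpleness hypothesis and only afterwards derive the general inequality by refining the partition. Since the Maslov index is an intersection number with $\cM_\Pi^1$, it is additive under concatenation, so $\Mi(\Lambda)=\sum_i \Mi_\Pi(\Lambda|_{[t_i,t_{i+1}]})$. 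For each simple arc I would first upgrade the transversal plane: the set of Lagrangian planes transversal to the compact arc $\Lambda([t_i,t_{i+1}])$ is open and meets the open dense set $\Pi^\pitchfork$, so I may choose $\Delta_i$ transversal to both the arc and $\Pi$ (the resulting quantity is intrinsic, being $\Mi_\Pi$ of the arc, so the choice is immaterial). Applying the preceding proposition arc by arc and summing gives the middle expression $\tfrac12\sum_i\left(\Ki(\Delta_i,\Lambda_i,\Pi)-\Ki(\Delta_i,\Lambda_{i+1},\Pi)\right)$ directly.

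The next step is to identify each summand with $\ind_\Pi(\Lambda_i,\Lambda_{i+1})$. I would apply the cocycle identity (Lemma~\ref{lemm:kashiwara}(2)) to the quadruple $(\Delta_i,\Lambda_i,\Lambda_{i+1},\Pi)$ together with antisymmetry (Lemma~\ref{lemm:kashiwara}(3)) to obtain $\Ki(\Delta_i,\Lambda_i,\Pi)-\Ki(\Delta_i,\Lambda_{i+1},\Pi)=\Ki(\Delta_i,\Lambda_i,\Lambda_{i+1})+\Ki(\Lambda_i,\Pi,\Lambda_{i+1})$. Comparing this with the relation between $\Ki$ and $\ind_\Pi$ (Lemma~\ref{lemm:kashiwara}(4)) collapses the claim to the single identity $\Ki(\Delta_i,\Lambda_i,\Lambda_{i+1})=\tfrac{\dim\Sigma}{2}-\dim(\Lambda_i\cap\Lambda_{i+1})$. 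This is precisely where monotonicity (Definition~\ref{def:monotone}) enters: because the arc from $\Lambda_i$ to $\Lambda_{i+1}$ is monotone increasing and lies in $\Delta_i^\pitchfork$, the Kashiwara quadratic form attached to $(\Delta_i,\Lambda_i,\Lambda_{i+1})$ is positive semidefinite, so its signature equals $n$ minus the dimension of its kernel, and the kernel is exactly $\Lambda_i\cap\Lambda_{i+1}$. Summing over $i$ then yields the full chain of equalities.

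For the inequality with an arbitrary partition, I would refine it until every subinterval carries a simple arc; this is possible by a Lebesgue-number argument, since $\Lambda$ is continuous and each of its (compact-image) points has a chart neighbourhood in some $\Delta^\pitchfork$. Applying the equality just proved to the refined partition gives $\Mi(\Lambda)=\sum_{\mathrm{refined}}\ind_\Pi$. Grouping the refined summands inside each original interval $[t_i,t_{i+1}]$ and applying the triangle inequality for the positive Maslov index (Lemma~\ref{lemm:maslov}(3)) repeatedly, each group bounds $\ind_\Pi(\Lambda_i,\Lambda_{i+1})$ from above; summing over $i$ gives $\Mi(\Lambda)\geq\sum_i\ind_\Pi(\Lambda_i,\Lambda_{i+1})$, with equality recovered exactly when the original pieces are already simple (no refinement, no loss).

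The main obstacle is the single identity $\Ki(\Delta_i,\Lambda_i,\Lambda_{i+1})=\tfrac{\dim\Sigma}{2}-\dim(\Lambda_i\cap\Lambda_{i+1})$, i.e. controlling the signature of the relevant Kashiwara form through monotonicity; everything else is formal manipulation of the index identities. A secondary source of care is the bookkeeping at partition points lying on the Maslov train, where $\Mi_\Pi$ acquires half-integer contributions. I would route all such endpoint conventions through the half-integer term $\tfrac12\dim\ker q$ already built into $\ind_\Pi$ and through the preceding single-chart proposition, rather than re-deriving the transversal perturbation and crossing conventions by hand.
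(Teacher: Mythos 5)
The paper itself contains no proof of this proposition --- it is imported verbatim from \cite{agrachev_quadratic} --- so there is no internal argument to compare against; judged on its own, your reconstruction is correct and follows the route of the cited source. The algebra checks out: applying the cocycle identity of Lemma~\ref{lemm:kashiwara}(2) to the quadruple $(\Delta_i,\Lambda_i,\Lambda_{i+1},\Pi)$ gives $\Ki(\Lambda_i,\Lambda_{i+1},\Pi)-\Ki(\Delta_i,\Lambda_{i+1},\Pi)+\Ki(\Delta_i,\Lambda_i,\Pi)-\Ki(\Delta_i,\Lambda_i,\Lambda_{i+1})=0$, and with antisymmetry and Lemma~\ref{lemm:kashiwara}(4) this reduces the whole equality, exactly as you say, to the single identity $\Ki(\Delta_i,\Lambda_i,\Lambda_{i+1})=\tfrac{\dim\Sigma}{2}-\dim(\Lambda_i\cap\Lambda_{i+1})$. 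Your upgrade of the transversal plane is also sound: transversality to a compact arc is an open nonempty condition, so it meets the open dense set $\Pi^\pitchfork$, and the quantity computed is intrinsic. Likewise the derivation of the general inequality --- refine by a Lebesgue-number argument until every piece is simple, apply the equality to the refinement, then contract each group of summands with the triangle inequality of Lemma~\ref{lemm:maslov}(3) --- is exactly right, and the closing term $\ind_\Pi(\Lambda_N,\Lambda_{N+1})=\ind_\Pi(\Lambda_0,\Lambda_0)=0$ causes no trouble.

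Two points that you assert should be written out, since one of them carries the real content. First, the crux identity is a two-line chart computation worth including: choose coordinates in which $\Delta_i=\{(0,y)\}$ and planes transversal to it are graphs $\{(x,Sx)\}$ of symmetric matrices; a vector $\lambda=(x,S_ix)\in\Lambda_i$ decomposes as $\lambda=\delta+\mu$ with $\delta=(0,(S_i-S_{i+1})x)\in\Delta_i$ and $\mu=(x,S_{i+1}x)\in\Lambda_{i+1}$, so the Kashiwara form of the triple $(\Delta_i,\Lambda_i,\Lambda_{i+1})$ is $q(\lambda)=\sigma(\delta,\mu)=x^T(S_{i+1}-S_i)x$; monotonicity integrated along the arc gives $S_{i+1}-S_i\geq 0$, whence $\sign q = n-\dim\ker(S_{i+1}-S_i)=n-\dim(\Lambda_i\cap\Lambda_{i+1})$, since $x\in\ker(S_{i+1}-S_i)$ exactly parametrizes $\Lambda_i\cap\Lambda_{i+1}$. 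Second, the proposition speaks of a \emph{continuous} monotone curve, while Definition~\ref{def:monotone} is stated for $C^1$ curves; your integration step needs monotonicity read chartwise (the matrix family $S(t)$ nondecreasing in each affine chart), which is the correct interpretation here but should be made explicit, together with your (correct) decision to route the endpoint conventions at partition points lying on $\cM_\Pi$ through the preceding single-chart proposition rather than re-deriving crossing conventions. With these additions the proof is complete.
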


This motivates the following definition, that extends the notions of Maslov index and monotonicity from continuous curves to general curves in the Lagrangian Grassmanian. This extension is important, since even in the relatively simple case of bang-bang trajectories the Jacobi curves are discontinuous.
\begin{definition}
\label{def:monotone2}
Let $\Lambda(t) : [0,T] \to L(\Sigma)$ be a curve in the Lagrangian Grassmanian. Given a partition $D = \{0=t_0 < t_1 <...< t_N = T\}$ we define 
$$
\ind^D_\Pi \Lambda(t) = \sum_{i=0}^{N-1} \ind_{\Pi}(\Lambda_i,\Lambda_{i+1}).
$$
where $\Lambda_i $ are as in the Proposition~\ref{prop:maslov}. We say that $\Lambda(t)$ is \emph{monotone increasing}, if
$$
\ind_\Pi \Lambda(t) = \sup_D \ind^D_\Pi \Lambda(t) < + \infty.
$$
The quantity $\ind_\Pi \Lambda(t)$ we call the \emph{Maslov index of a monotone curve}.
\end{definition}

The Maslov index defined in such way inherits many useful properties of the usual Maslov index defined as an intersection number. For example, we have the following lemma.
\begin{lemma}
\label{lem:maslov_ind}
Let $\Lambda(t)$ be a closed monotone curve in the sense of Definition~\ref{def:monotone2}. Then 
$$
\ind_\Pi \Lambda(t) = \ind_\Delta \Lambda(t), \qquad \forall \Delta,\Pi \in L(\Sigma).
$$
\end{lemma}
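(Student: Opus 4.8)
The plan is to prove the stronger statement that $\ind_\Pi^D \Lambda(t) = \ind_\Delta^D \Lambda(t)$ for \emph{every} partition $D$, from which the equality of the suprema, and hence of the Maslov indices, is immediate. In this approach the monotonicity hypothesis enters only to guarantee that the suprema in Definition~\ref{def:monotone2} are finite; the actual identity is purely combinatorial and will hold term by term in the partition.

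First I would rewrite each summand $\ind_\Pi(\Lambda_i,\Lambda_{i+1})$ in terms of the Kashiwara index using property (4) of Lemma~\ref{lemm:kashiwara}, which yields
$$
\ind_\Pi(\Lambda_i,\Lambda_{i+1}) = \frac{1}{2}\left( \frac{\dim\Sigma}{2} - \dim(\Lambda_i \cap \Lambda_{i+1}) + \Ki(\Lambda_i,\Pi,\Lambda_{i+1}) \right).
$$
Summing over $i$, the contributions $\frac{\dim\Sigma}{2} - \dim(\Lambda_i\cap\Lambda_{i+1})$ are manifestly independent of the reference plane, so the entire dependence of $\ind_\Pi^D \Lambda(t)$ on $\Pi$ is carried by $\sum_i \Ki(\Lambda_i,\Pi,\Lambda_{i+1})$. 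It therefore suffices to show that this last sum takes the same value for $\Pi$ and for $\Delta$.

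The key step is to control the difference $\Ki(\Lambda_i,\Pi,\Lambda_{i+1}) - \Ki(\Lambda_i,\Delta,\Lambda_{i+1})$ by applying the cocycle property (property (2) of Lemma~\ref{lemm:kashiwara}) to the four planes $\Lambda_i,\Pi,\Delta,\Lambda_{i+1}$. Specializing the cocycle relation and using the antisymmetry of $\Ki$ to re-order one of the resulting triples, I expect to obtain
$$
\Ki(\Lambda_i,\Pi,\Lambda_{i+1}) - \Ki(\Lambda_i,\Delta,\Lambda_{i+1}) = \Ki(\Lambda_i,\Pi,\Delta) - \Ki(\Lambda_{i+1},\Pi,\Delta).
$$
The crucial feature is that the right-hand side depends on $i$ only through a single endpoint, so that summing over $i = 0,\dots,N-1$ produces a telescoping sum equal to $\Ki(\Lambda_0,\Pi,\Delta) - \Ki(\Lambda_N,\Pi,\Delta)$. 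Since the curve is closed we have $\Lambda_N = \Lambda_0$, and this difference vanishes.

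Collecting these facts gives $\ind_\Pi^D \Lambda(t) = \ind_\Delta^D \Lambda(t)$ for each $D$, and passing to the supremum over partitions completes the argument. I do not anticipate a serious obstacle here; the only point requiring care is the bookkeeping with the antisymmetry of $\Ki$ when reducing the cocycle identity to telescoping form, together with using the closedness convention $\Lambda_N = \Lambda_0$ precisely at the spot where the boundary terms cancel. As a consistency check, for continuous curves this recovers the $\Pi$-independence already guaranteed by Proposition~\ref{prop:maslov_closed} via Proposition~\ref{prop:maslov}, but the present argument applies directly to the possibly discontinuous curves covered by Definition~\ref{def:monotone2}.
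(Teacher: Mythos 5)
Your proof is correct, and it takes a genuinely different route from the paper's. The paper's proof is topological: it notes that the supremum in Definition~\ref{def:monotone2} is attained at some partition $D$ (finiteness plus discreteness of values), glues the jumps $\Lambda(t_i)\to\Lambda(t_{i+1})$ by simple monotone arcs to obtain a closed \emph{continuous} curve $\hat\Lambda$, identifies $\sum_i \ind_\Pi(\Lambda_i,\Lambda_{i+1})$ with the intersection-theoretic Maslov index $\Mi_\Pi(\hat\Lambda)$ via Proposition~\ref{prop:maslov}, and then switches from $\Pi$ to $\Delta$ using the train-independence of $\Mi$ on loops (Proposition~\ref{prop:maslov_closed}). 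You replace all of this by linear algebra on the Kashiwara index: property 4 of Lemma~\ref{lemm:kashiwara} isolates the entire $\Pi$-dependence of $\ind^D_\Pi\Lambda(t)$ in $\sum_i \Ki(\Lambda_i,\Pi,\Lambda_{i+1})$, and your key identity is exactly what the cocycle property delivers — taking $(\Lambda_1,\Lambda_2,\Lambda_3,\Lambda_4)=(\Lambda_i,\Pi,\Delta,\Lambda_{i+1})$ gives $\Ki(\Lambda_i,\Pi,\Lambda_{i+1})-\Ki(\Lambda_i,\Delta,\Lambda_{i+1}) = \Ki(\Lambda_i,\Pi,\Delta)-\Ki(\Pi,\Delta,\Lambda_{i+1})$, and since a cyclic permutation is even, $\Ki(\Pi,\Delta,\Lambda_{i+1})=\Ki(\Lambda_{i+1},\Pi,\Delta)$ — so that
$$
2\left( \ind^D_\Pi \Lambda(t) - \ind^D_\Delta \Lambda(t) \right) = \sum_{i=0}^{N-1}\left( \Ki(\Lambda_i,\Pi,\Delta) - \Ki(\Lambda_{i+1},\Pi,\Delta) \right) = \Ki(\Lambda_0,\Pi,\Delta) - \Ki(\Lambda_N,\Pi,\Delta),
$$
which vanishes because $\Lambda_N = \Lambda(T) = \Lambda(0) = \Lambda_0$ for a closed curve. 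Your approach buys the stronger partition-wise statement $\ind^D_\Pi\Lambda(t)=\ind^D_\Delta\Lambda(t)$ for \emph{every} $D$: in particular it shows that the finiteness condition in Definition~\ref{def:monotone2} — and hence the notion of a monotone closed curve itself — is independent of $\Pi$, without needing the supremum to be attained and without constructing any interpolating curves. What the paper's route buys is that the glued curve $\hat\Lambda$ and its canonical lift are precisely the machinery reused later (Remark~\ref{rem:lift} and the proof of Theorem~\ref{thm:main_index_result}), so its proof of this lemma doubles as a rehearsal for that construction and directly identifies $\ind_\Pi\Lambda(t)$ with an honest intersection index.
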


\begin{proof}
Since the curve is monotone, the supremum in the definition is finite. But since it can take only discrete values, it must be attained by some partition $D$, i.e.
$$
\ind_\Pi \Lambda(t) = \sum_{i=0}^N \ind_\Pi(\Lambda(t_i),\Lambda(t_{i+1})),
$$
where $\Lambda(t_{N+1}) = \Lambda(t_0)$, $t_i \in D$. At the same time we can join $\Lambda(t_i)$ with simple monotone curves and construct this way a closed curve $\hat \Lambda(t)$. Then by the Proposition~\ref{prop:maslov_closed}
$$
\ind_\Pi \Lambda(t) = \Mi_{\Pi} (\hat \Lambda(t)) = \Mi_{\Delta} (\hat \Lambda(t)) = \sum_{i=1}^n \ind_{\Delta}(\Lambda(t_i),\Lambda(t_{i+1})) = \ind_\Delta \Lambda(t).
$$
\end{proof}

However we would also like to see that this definition is well defined, i.e. that it coincides with the previous one in the case of differentiable curves.

\begin{theorem}
\label{thm:monot}
A differentiable curve $\Lambda(t) \in L(\Sigma)$ is monotone increasing if, and only if, $\ind_\Pi \Lambda(t) < +\infty$ for some $\Pi\in L(\Sigma)$. 
\end{theorem}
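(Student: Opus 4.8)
The plan is to prove the two implications separately. For a differentiable curve I read ``monotone increasing'' in the sense of Definition~\ref{def:monotone}, i.e.\ the pointwise condition $\dot\Lambda(t)\geq 0$ as a quadratic form on $\Lambda(t)$, and ``$\ind_\Pi\Lambda(t)<+\infty$'' in the sense of Definition~\ref{def:monotone2}. The one structural fact I use throughout is that refining a partition can only increase $\ind^D_\Pi\Lambda$, by the triangle inequality (property (3) of Lemma~\ref{lemm:maslov}); hence $\ind_\Pi\Lambda=\sup_D\ind^D_\Pi\Lambda$ is a limit over refinements and, when finite, is attained.

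For the forward implication (monotone $\Rightarrow$ finite index) I would first use continuity of $\Lambda$ and compactness of $[0,T]$ to cover the curve by finitely many subintervals $[a_j,b_j]$ on each of which $\Lambda$ is simple, choosing the chart plane $\Delta_j$ transversal both to the whole sub-curve and to $\Pi$ (a generic choice of $\Delta_j$ achieves both). On a simple monotone piece every sub-piece is again simple and monotone, so the positive Maslov index of two consecutive points equals the Maslov index $\Mi_\Pi$ of the sub-piece joining them (Proposition~\ref{prop:maslov} and the proposition preceding it); summing over any partition of $[a_j,b_j]$ telescopes to $\tfrac12\bigl(\Ki(\Delta_j,\Lambda(a_j),\Pi)-\Ki(\Delta_j,\Lambda(b_j),\Pi)\bigr)$, a number independent of the partition and bounded in modulus by $\dim\Sigma/2$ (property (1) of Lemma~\ref{lemm:kashiwara}). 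Refining an arbitrary partition to contain all the endpoints $a_j,b_j$ only increases $\ind^D_\Pi$ and splits the sum across the pieces, so $\ind_\Pi\Lambda=\sum_j\Mi_\Pi(\Lambda|_{[a_j,b_j]})<+\infty$.

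For the reverse implication I argue by contraposition: if $\Lambda$ is not monotone increasing there is a time $t_0$ and a vector $\lambda\in\Lambda(t_0)$ with $\dot\Lambda(t_0)(\lambda)<0$, and I claim $\ind_\Pi\Lambda=+\infty$ for \emph{every} $\Pi$. The heart is a local lower bound: for each fixed $\Pi$ there is $\delta>0$ so that every pair $t_0<s<s'<t_0+\delta$ has $\ind_\Pi(\Lambda(s),\Lambda(s'))\geq 1$; partitioning $[t_0,t_0+\delta]$ into $N$ pieces then forces $\ind^D_\Pi\geq N\to\infty$. To establish the bound I pass to the affine chart at $\Lambda(t_0)$, writing $\Lambda(t)=\gr(S(t))$ with $S(t_0)=0$ and $\dot S(t_0)=\dot\Lambda(t_0)$ carrying a negative eigendirection. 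In the model $\Sigma=\R^2$ one checks directly, exactly as in the $\R^2$ discussion preceding Proposition~\ref{prop:homotopy}, that a monotone-decreasing step sweeps almost the whole Lagrangian Grassmanian in the clockwise sense, hence crosses every fixed $\Pi\neq\Lambda(t_0)$ once and contributes $1$; the exceptional short arc shrinks to $\{\Lambda(t_0)\}$ as $s,s'\to t_0$, and the remaining case $\Pi=\Lambda(t_0)$ is handled in the same chart. The quantitative calibration is the identity $\ind_\Pi(\Lambda_1,\Lambda_2)+\ind_\Pi(\Lambda_2,\Lambda_1)=\tfrac12\dim\Sigma-\dim(\Lambda_1\cap\Lambda_2)$, which follows from property (4) of Lemma~\ref{lemm:kashiwara} and antisymmetry of the Kashiwara index, and which shows that reversing a short nearly transversal step carries nearly maximal positive index.

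The main obstacle is upgrading this local bound from $\R^2$ to the general mixed-signature case: $\dot\Lambda(t_0)$ may also have positive eigenvalues, and a fixed $\Pi$ need not respect the eigenspace decomposition, so the naive product argument breaks down. My plan is to isolate a single negative eigendirection by a symplectic reduction to a $2$-dimensional symplectic subspace $W\subset\Sigma$ adapted to $\dot\Lambda(t_0)$, to verify that the reduced curve is genuinely monotone decreasing on $(t_0,t_0+\delta]$, and to show that $\ind_\Pi(\Lambda(s),\Lambda(s'))$ dominates the reduced index of the reduced planes. This monotonicity of the positive Maslov index under reduction is the delicate step, precisely because the reducing subspace does not lie in the intermediate planes $\Lambda(s)$; I expect to extract it from the triangle inequality together with property (2) of Lemma~\ref{lemm:maslov} after passing to a common isotropic subspace. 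Once the reduced step is shown to contribute at least $1$ uniformly in $\Pi$ by the $\R^2$ computation already carried out, the blow-up follows and the contrapositive is complete.
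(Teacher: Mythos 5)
Your forward implication (Definition~\ref{def:monotone} monotone $\Rightarrow$ finite index) is correct, but it takes a genuinely different route from the paper. You cover the curve by finitely many simple pieces, choose each chart plane $\Delta_j$ generically transversal to the sub-curve and to $\Pi$, and telescope the Kashiwara formula, so that $\sum_i \ind_\Pi(\Lambda(s_i),\Lambda(s_{i+1}))$ over any partition of a simple piece is partition-independent and bounded by $\dim\Sigma$; monotonicity of $\ind^D_\Pi$ under refinement then gives finiteness. (The termwise identity you invoke is not literally stated in Proposition~\ref{prop:maslov}, which concerns closed curves, but it follows by the routine step of closing the piece with a simple monotone arc and subtracting the fixed return term, so this is only a presentational gap.) The paper instead argues by contradiction: an infinite index would force $\Mi_\Pi\Lambda(t)=\infty$ after refining to simple pieces, contradicting the estimate $\frac{\pi}{2\sqrt n}\Mi_\Pi\Lambda(t)\leq \mathrm{length}(\Lambda(t))$ for a suitable Riemannian metric on $L(\Sigma)$ imported from~\cite{agr_gamk_symp}. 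Your argument is more self-contained, using only the index calculus already developed in Section~\ref{sec:sympl_def}; the paper's is shorter but outsources the key quantitative bound to a cited metric estimate.

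Your reverse implication, however, has a genuine gap, exactly at the step you flag. The uniform local bound $\ind_\Pi(\Lambda(s),\Lambda(s'))\geq 1$ \emph{for the given, arbitrary} $\Pi$ rests on a symplectic reduction to a two-dimensional subspace $W$ adapted to a negative eigendirection of $\dot\Lambda(t_0)$, together with a claimed domination of the reduced pair index by $\ind_\Pi(\Lambda(s),\Lambda(s'))$. No such comparison is available from the tools at hand: property 2 of Lemma~\ref{lemm:maslov} requires the isotropic subspace $\Gamma$ to lie in \emph{both} Lagrangian planes, which fails for the intermediate planes $\Lambda(s)$, and neither the triangle inequality nor antisymmetry of $\Ki$ produces the missing inequality — in the mixed-signature case the positive directions of $\dot\Lambda(t_0)$ can compensate the negative one relative to an unfavorable $\Pi$, and the reduced trace of a fixed $\Pi$ in $W$ can degenerate as $s,s'\to t_0$. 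The paper avoids this problem with two moves that you should adopt. First, it does not prove the bound for arbitrary $\Pi$ at all: closing each partition polygon by simple monotone arcs and using Proposition~\ref{prop:maslov_closed} together with $0\leq\ind\leq n$ gives $|\ind_\Pi\Lambda(t)-\ind_\Delta\Lambda(t)|\leq n$ for any two trains, so divergence with respect to one convenient $\Delta$ suffices. Second, it takes $\Delta$ to be the chart plane of a simple neighbourhood of $t_0$, shrunk so that $S(s')-S(s)\ngeq 0$ on \emph{every} subinterval (possible since $\dot S(\cdot)(\lambda)<0$ persists near $t_0$). Then any simple monotone increasing curve joining $\Lambda(s)$ to $\Lambda(s')$ cannot stay inside $\Delta^\pitchfork$ — inside the chart monotone curves have nondecreasing $S$ — hence it crosses $\cM_\Delta$ and $\ind_\Delta(\Lambda(s),\Lambda(s'))\geq 1$, uniformly over subintervals; splitting $[t_1,t_2]$ into $N$ pieces gives $\ind^D_\Delta\geq N\to\infty$, and the train-comparison transfers the blow-up to every $\Pi$. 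This argument works in any dimension and for any signature of $\dot\Lambda(t_0)$, with no reduction lemma needed.
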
 

\begin{proof}

Let $\Lambda(t)$ be $C^1$ on $[0,T]$ and monotone in the sense of Definition~\ref{def:monotone}. Let us assume by contradiction that it is not monotone in the sense of Definition~\ref{def:monotone2}. Then there exists a series of splittings $D_n$ of the form $0 = t_0^n < ... < t^n_{k_n}$, s.t. $\ind_{\Pi}^{D_n} \Lambda_{t} \to +\infty$ as $n\to \infty$. By refining $D_n$ if necessary, we can always assume that each restriction $\Lambda|_{[t_i^n,t_{i+1}^n]}$ is simple. Then by Proposition~\ref{prop:maslov} we have $\Mi_{\Pi} \Lambda(t) = \infty$.

We claim that this is impossible. Indeed in~\cite{agr_gamk_symp} it was shown, that one can introduce a Riemannian metric on $L(\Sigma)$, s.t. the length of the monotone curve $\Lambda(t)$ can be bounded by the Maslov index as
$$
\frac{\pi}{2\sqrt{n}} \Mi_\Pi \Lambda(t) \leq length(\Lambda(t)) \leq \frac{\pi}{2} \Mi_\Pi \Lambda(t).
$$ 
Thus the length must be infinite and we arrive at a contradiction with the fact that the curve is $C^1$. 

Now we prove the converse. Suppose that we have $\Lambda(t) \in C^1([0,T],L(\Sigma))$ with $\ind_\Pi \Lambda(t) < + \infty$. We claim that $\Lambda(t)$ is monotone increasing in the sense of Definition~\ref{def:monotone}. Let us prove the opposite statement that for a non-increasing curve $\Lambda(t)$ the index $\ind_\Pi \Lambda(t) $ is infinite. 

By Lemma~\ref{lem:maslov_ind} and the first property in Lemma~\ref{lemm:maslov}, we can see that for any Lagrangian plane $\Delta$ 
$$
|\ind_{\Pi} \Lambda(t) - \ind_{\Delta} \Lambda(t)|\leq n.
$$ 
So it is enough to prove that $\ind_{\Delta}\Lambda|_{[t_1,t_2]} = +\infty$ for any $\Delta$ of our choice.

If $\Lambda(t)$ is not monotone increasing in the sense of Definition~\ref{def:monotone}, then there exists a moment of time $t\in[0,T]$ and a vector $\lambda$, s.t. $\dot{\Lambda}(t)(\lambda)<0$. We can choose a sufficiently small subinterval $[t_1,t_2]\in[0,T]$ such that it contains the moment of time $t$ and the restriction of $\Lambda(t)$ is simple, i.e. lies entirely in $\Delta^\pitchfork$. 

Let us consider the coordinate chart $\Delta^\pitchfork$. We can find $[t_1,t_2]$ sufficiently small, so that $S(t_2) - S(t_1) \ngeq 0$ (or else the curve would have been monotone). Let us take any smooth simple monotone curve $\alpha:[0,T]\to L(\Sigma)$ that joins $\Lambda(t_1)$ with $\Lambda(t_2)$. The curve $\alpha(t)$ can not lie entirely in $\Delta^\pitchfork$, because in this case we would have had by monotonicity $S(t_2) - S(t_1) = S^\alpha(T)-S^\alpha(0) \geq 0$. Therefore at some point $\alpha$ must intersect the Maslov train $\cM_{\Delta}$ and $\Mi_{\Delta} \alpha(t) \geq 1$. Then by Proposition~\ref{prop:maslov} we have $\ind_{\Delta}(\Lambda(t_1),\Lambda(t_2))\geq 1$ as well. But the same is true for any other subinterval of $[t_1,t_2]$. Thus by splitting it into smaller subintervals, we will find that $\ind^D_{\Delta} \Lambda(t) \to +\infty$.
\end{proof}

Although these invariants were already successfully applied in~\cite{agrachev_quadratic,agr_gamk_symp} to the study of the second variation of some classes of optimal control problem, in order to formulate the main Morse theorem we need one more symplectic invariant.

\begin{definition}
Let $\widetilde{L(\Sigma)}$ be the universal covering of $L(\Sigma)$. The Leray index is the unique mapping 
$$
\Li: \widetilde{L(\Sigma)} \times \widetilde{L(\Sigma)} \to \Z
$$
that satisfies the following two properties:
\begin{enumerate}
\item $\Li$ is locally constant on the set $\{(\tilde\Lambda_1,\tilde\Lambda_2): \Lambda_1 \cap \Lambda_2 = \{0\}\}$;
\item $-\Li(\tilde\Lambda_2,\tilde\Lambda_3)+\Li(\tilde\Lambda_1,\tilde\Lambda_3) - \Li(\tilde\Lambda_1,\tilde\Lambda_2) = \Ki(\Lambda_1,\Lambda_2,\Lambda_3)$.
\end{enumerate}
\end{definition} 

An explicit construction of the Leray index using matrix logarithms can be found in~\cite[Section 3.2.3]{gosson} or~\cite{sternberg}. We only list its main properties, that are going to be useful for the computations. Note however that this Leray index is the minus Leray index introduced in the two cited references.

\begin{lemma}
\label{lemm:leray} 
The Leray index $Li$ has the following properties
\begin{enumerate}
\item Antisymmetry
$$
\Li(\tilde\Lambda_1,\tilde\Lambda_2) = -\Li(\tilde\Lambda_2,\tilde\Lambda_1),
$$
\item If $\tilde\Lambda(t)$ as a lift a closed continuous curve $\Lambda(t) :[0,T] \to L(\Sigma)$ to $\widetilde{L(\Sigma)}$, then

$$
\Li(\tilde\Lambda(T),\tilde\Lambda) - \Li(\tilde\Lambda(0),\tilde\Lambda) = 2\Mi(\Lambda(t)), \qquad \forall \tilde \Lambda \in \widetilde{L(\Sigma)}.
$$
\end{enumerate}
\end{lemma}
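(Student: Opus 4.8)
The plan is to derive both statements directly from the two defining axioms of $\Li$ (local constancy on transversal pairs and the coboundary relation with the Kashiwara index), together with the properties of $\Ki$ collected in Lemma~\ref{lemm:kashiwara} and the simple-piece formula $\Mi_\Pi(\Lambda)=\tfrac12(\Ki(\Delta,\Lambda_0,\Pi)-\Ki(\Delta,\Lambda_1,\Pi))$ stated just before Proposition~\ref{prop:maslov}. Nothing about the explicit matrix-logarithm construction is needed.

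For antisymmetry I would first feed degenerate triples into the coboundary axiom. Setting all three lifts equal gives $\Li(\tilde\Lambda,\tilde\Lambda)=\Ki(\Lambda,\Lambda,\Lambda)=0$, the vanishing of $\Ki$ on a repeated triple being immediate from its antisymmetry (Lemma~\ref{lemm:kashiwara}). Then setting $\tilde\Lambda_3=\tilde\Lambda_1$ in the coboundary relation and using $\Ki(\Lambda_1,\Lambda_2,\Lambda_1)=0$ collapses it to $\Li(\tilde\Lambda_1,\tilde\Lambda_2)+\Li(\tilde\Lambda_2,\tilde\Lambda_1)=0$, which is exactly the claim.

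For the loop formula, fix the reference point $\tilde\Lambda$ and lift $\Lambda(t)$ to a continuous path $\tilde\Lambda(t)$. The idea is to telescope the coboundary axiom along a partition $0=t_0<\dots<t_N=T$: applying it to each triple $(\tilde\Lambda(t_i),\tilde\Lambda(t_{i+1}),\tilde\Lambda)$ and summing, the left-hand side collapses to $\Li(\tilde\Lambda(0),\tilde\Lambda)-\Li(\tilde\Lambda(T),\tilde\Lambda)$, leaving $\sum_i\bigl[\Li(\tilde\Lambda(t_i),\tilde\Lambda(t_{i+1}))-\Ki(\Lambda(t_i),\Lambda(t_{i+1}),\Lambda)\bigr]$. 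To evaluate the remaining Leray terms I would refine the partition so that each restriction $\Lambda|_{[t_i,t_{i+1}]}$ is simple, and choose $\Delta_i$ transversal both to the whole piece and to $\Lambda$ (possible since transversality is open and dense). Applying the coboundary axiom once more to $(\tilde\Lambda(t_i),\tilde\Lambda(t_{i+1}),\tilde\Delta_i)$ and noting that $t\mapsto\Li(\tilde\Lambda(t),\tilde\Delta_i)$ is constant by local constancy (the piece never meets $\Delta_i$), the two mixed terms cancel and I obtain $\Li(\tilde\Lambda(t_i),\tilde\Lambda(t_{i+1}))=\Ki(\Lambda(t_i),\Lambda(t_{i+1}),\Delta_i)$.

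At this point I would invoke the cocycle property of $\Ki$ together with its antisymmetry to rewrite $\Ki(\Lambda(t_i),\Lambda(t_{i+1}),\Delta_i)-\Ki(\Lambda(t_i),\Lambda(t_{i+1}),\Lambda)$ as $\Ki(\Delta_i,\Lambda(t_i),\Lambda)-\Ki(\Delta_i,\Lambda(t_{i+1}),\Lambda)$, which by the simple-piece formula equals $2\Mi_\Lambda(\Lambda|_{[t_i,t_{i+1}]})$. Summing over $i$, additivity of the Maslov intersection index under concatenation gives $2\Mi_\Lambda(\Lambda(t))$, and Proposition~\ref{prop:maslov_closed} removes the dependence on the reference plane to yield $2\Mi(\Lambda(t))$. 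The delicate points will be the bookkeeping of the $\Ki$ permutation signs and, above all, arranging the partition so that each piece is simple while simultaneously admitting a transversal $\Delta_i$ to both the piece and $\Lambda$; this, together with the constancy-under-locality argument that kills the $\tilde\Delta_i$ terms, is where the genuine work lies.
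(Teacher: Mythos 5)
Your proposal is correct, but the comparison here is asymmetric: the paper does not prove Lemma~\ref{lemm:leray} at all --- it defers both properties, together with Theorem~\ref{thm:univer_cover}, to the literature (de Gosson's book~\cite{gosson}). What you have produced is a self-contained derivation from the two defining axioms, and it checks out. The antisymmetry argument is clean: plugging the degenerate triples into the coboundary relation and using $\Ki(\Lambda,\Lambda,\Lambda)=\Ki(\Lambda_1,\Lambda_2,\Lambda_1)=0$ (odd transpositions fixing the triple) gives $\Li(\tilde\Lambda,\tilde\Lambda)=0$ and then $\Li(\tilde\Lambda_1,\tilde\Lambda_2)+\Li(\tilde\Lambda_2,\tilde\Lambda_1)=0$. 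For the loop formula, your telescoping identity $\Li(\tilde\Lambda(0),\tilde\Lambda)-\Li(\tilde\Lambda(T),\tilde\Lambda)=\sum_i\bigl[\Li(\tilde\Lambda(t_i),\tilde\Lambda(t_{i+1}))-\Ki(\Lambda(t_i),\Lambda(t_{i+1}),\Lambda)\bigr]$, the evaluation $\Li(\tilde\Lambda(t_i),\tilde\Lambda(t_{i+1}))=\Ki(\Lambda(t_i),\Lambda(t_{i+1}),\Delta_i)$ via local constancy of $s\mapsto\Li(\tilde\Lambda(s),\tilde\Delta_i)$ along a piece transversal to $\Delta_i$, and the cocycle/antisymmetry rewriting all verify with the paper's sign conventions (I checked the substitution $(\Lambda_1,\Lambda_2,\Lambda_3,\Lambda_4)=(\Delta_i,\Lambda(t_i),\Lambda(t_{i+1}),\Lambda)$ in property 2 of Lemma~\ref{lemm:kashiwara}, plus the even cyclic permutation). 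The two delicate points you flag are genuine but surmountable exactly as you suggest: a $\Delta_i$ transversal simultaneously to $\Lambda(t_i)$ and to the reference plane $\Lambda$ exists since both transversality conditions are open and dense (cf.\ Lemma~\ref{lem:count_int}), and openness plus compactness lets you refine the partition so each piece stays in $\Delta_i^\pitchfork$; the requirement $\Delta_i\cap\Lambda=\{0\}$ is precisely the hypothesis $\Pi\cap\Delta=\{0\}$ of the simple-piece formula from~\cite{agrachev_quadratic}, so your use of it is legitimate. The one step to state more carefully is the final appeal to ``additivity under concatenation'': interior partition points may lie on $\cM_\Lambda$, so the half-integer endpoint conventions implicit in the formula $\Mi_\Lambda=\frac{1}{2}(\Ki(\Delta_i,\Lambda(t_i),\Lambda)-\Ki(\Delta_i,\Lambda(t_{i+1}),\Lambda))$ are what make consecutive contributions cancel; since the total curve is closed, the sum is the integer intersection index, and Proposition~\ref{prop:maslov_closed} removes the dependence on $\Lambda$ as you say. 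It is worth noticing that your computation is essentially the same telescoping manipulation the paper itself performs in Step~4 of the proof of Theorem~\ref{thm:main_index_result} (there used to convert Maslov indices into Leray indices, here run in the opposite direction), so your proof buys a self-contained justification of a lemma the paper only cites, using machinery already present in the paper.
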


The Leray index allows to define the Maslov index and other intersection indices for curves in the Lagrangian Grassmanian and symplectic group in an abstract way. But one of its most important applications is that it can be used to construct an explicit model for the universal covering space $\widetilde{L(\Sigma)}$.

\begin{theorem}
\label{thm:univer_cover}
Let $\tilde\Lambda_\alpha$ be a lift of an arbitrary Lagrangian plane $\Lambda_\alpha$ to the universal covering $\widetilde{L(\Sigma)}$. Define a mapping $\Phi_\alpha: \widetilde{L(\Sigma)} \to L(\Sigma) \times \Z$ by
$$
\Phi_\alpha(\tilde\Lambda) = \left( \Lambda, \frac{1}{2}\Li(\tilde\Lambda,\tilde\Lambda_\alpha) \right).
$$  
Then
\begin{enumerate}
\item The mapping $\Phi_\alpha$ is a bijection, whose restrictions to the subset $\{\tilde{\Lambda} \in \widetilde{L(\Sigma)}: \Lambda\cap \Lambda_\alpha = \{0\}\}$ is a homeomorphism onto $\{\Lambda \in L(\Sigma): \Lambda\cap \Lambda_\alpha = \{0\}\}$.
\item The set of all bijections $\Phi_\alpha$ forms a system of local charts of $\widetilde{L(\Sigma)}$ whose transitions $\Phi_{\alpha\beta} = \Phi_\alpha \Phi_\beta^{-1}$ are the functions
$$
\Phi_{\alpha\beta}(\Lambda,k) = \left( \Lambda,k + \frac{\Ki(\Lambda,\Lambda_\alpha,\Lambda_\beta) - \Li(\tilde\Lambda_\alpha,\tilde{\Lambda}_\beta)}{2} \right)
$$
\end{enumerate}
\end{theorem}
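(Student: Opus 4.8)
The plan is to separate the two assertions: first that $\Phi_\alpha$ is a bijection which restricts to a homeomorphism on each transversal chart, and then to read off the transition maps as a one-line consequence of the cocycle relation defining $\Li$.

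I would begin by recording the topological data. Since $L(\Sigma)$ is connected with $\pi_1(L(\Sigma)) \cong \Z$ by Proposition~\ref{prop:maslov_closed}, the covering projection $p\colon \widetilde{L(\Sigma)} \to L(\Sigma)$ has deck group $\Z$; let $g$ be the deck transformation obtained by lifting a generating loop of Maslov index $1$. The decisive computation is the effect of $g$ on the second coordinate of $\Phi_\alpha$. Applying the second property of Lemma~\ref{lemm:leray} to the lift of this generating loop starting at an arbitrary $\tilde\Lambda$ gives
\begin{equation*}
\Li(g^m\tilde\Lambda,\tilde\mu) = \Li(\tilde\Lambda,\tilde\mu) - 2m, \qquad \forall\, \tilde\mu\in\widetilde{L(\Sigma)},\ m\in\Z,
\end{equation*}
so that $\tfrac12\Li(\,\cdot\,,\tilde\Lambda_\alpha)$ decreases by exactly $m$ under $g^m$ and therefore faithfully labels the sheet of the cover lying over a fixed base plane $\Lambda$.

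From here bijectivity is formal. For injectivity, equal images share a base plane, so the two arguments differ by some $g^m$, and equality of the Leray coordinate forces $m=0$. For surjectivity, given $(\Lambda,k)$ I pick any lift $\tilde\Lambda_0$ of $\Lambda$ and translate by the power of $g$ that moves its Leray coordinate to $k$. The one genuinely delicate point, and the step I expect to be the main obstacle, is to verify that $\tfrac12\Li(\tilde\Lambda_0,\tilde\Lambda_\alpha)$ is an \emph{integer}, so that every $k\in\Z$ is actually attained and the target is $L(\Sigma)\times\Z$ rather than a half-integer shift of it; this is a parity statement for the Leray index that must be extracted from its explicit construction (cf.~\cite{gosson,sternberg}), since it does not follow from the purely formal axioms in Lemma~\ref{lemm:leray} alone. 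For the homeomorphism claim I would combine the first property of Lemma~\ref{lemm:leray} (local constancy of $\Li$ on transversal pairs) with the fact that $p$ is a local homeomorphism: on the connected affine chart $\Lambda_\alpha^\pitchfork$ (identified with a space of symmetric forms in Section~\ref{sec:sympl_def}) the second coordinate is constant on each sheet, $p$ carries each connected component of $p^{-1}(\Lambda_\alpha^\pitchfork)$ homeomorphically onto $\Lambda_\alpha^\pitchfork$, and distinct components receive distinct integer labels, so $\Phi_\alpha$ restricts to a homeomorphism componentwise.

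The transition functions are then immediate. Writing $(\Lambda,k)=\Phi_\beta(\tilde\Lambda)$, i.e. $k=\tfrac12\Li(\tilde\Lambda,\tilde\Lambda_\beta)$, I apply the cocycle relation in the definition of $\Li$ to the triple $(\tilde\Lambda,\tilde\Lambda_\alpha,\tilde\Lambda_\beta)$,
\begin{equation*}
\Li(\tilde\Lambda_\alpha,\tilde\Lambda_\beta) - \Li(\tilde\Lambda,\tilde\Lambda_\beta) + \Li(\tilde\Lambda,\tilde\Lambda_\alpha) = \Ki(\Lambda,\Lambda_\alpha,\Lambda_\beta),
\end{equation*}
solve for $\Li(\tilde\Lambda,\tilde\Lambda_\alpha)$ and halve, obtaining exactly
\begin{equation*}
\tfrac12\Li(\tilde\Lambda,\tilde\Lambda_\alpha) = k + \frac{\Ki(\Lambda,\Lambda_\alpha,\Lambda_\beta) - \Li(\tilde\Lambda_\alpha,\tilde\Lambda_\beta)}{2},
\end{equation*}
which is the stated formula for $\Phi_{\alpha\beta}=\Phi_\alpha\Phi_\beta^{-1}$. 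Finally, to confirm these genuinely form an atlas I would check the cocycle condition $\Phi_{\alpha\beta}\circ\Phi_{\beta\gamma}=\Phi_{\alpha\gamma}$ on triple overlaps; this reduces to the identity $\Ki(\Lambda,\Lambda_\alpha,\Lambda_\beta)+\Ki(\Lambda,\Lambda_\beta,\Lambda_\gamma)-\Ki(\Lambda,\Lambda_\alpha,\Lambda_\gamma)=\Ki(\Lambda_\alpha,\Lambda_\beta,\Lambda_\gamma)$, which is precisely the cocycle property of the Kashiwara index (Lemma~\ref{lemm:kashiwara}) combined once more with the Leray cocycle.
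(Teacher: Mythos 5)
Your proposal cannot be compared with an internal proof: the paper does not prove Theorem~\ref{thm:univer_cover}, it quotes it and defers the proof to~\cite{gosson}. Measured against that standard argument, your route is the right one and reproduces it faithfully. The deck-transformation identity $\Li(g^m\tilde\Lambda,\tilde\mu)=\Li(\tilde\Lambda,\tilde\mu)-2m$ is a correct application of property 2 of Lemma~\ref{lemm:leray} (the sign depending only on the chosen generator), and it does carry the whole of part 1: it labels the sheets, gives injectivity and surjectivity formally, and combined with local constancy of $\Li$ on transversal pairs and the even covering of the contractible affine chart $\Lambda_\alpha^\pitchfork$ it yields the componentwise homeomorphism. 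Your derivation of $\Phi_{\alpha\beta}$ from the defining cocycle applied to $(\tilde\Lambda,\tilde\Lambda_\alpha,\tilde\Lambda_\beta)$ is exactly the intended one-line argument, and your verification of the atlas cocycle condition via property 2 of Lemma~\ref{lemm:kashiwara} is correct.

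The step you flagged as the main obstacle --- integrality of $\frac{1}{2}\Li(\tilde\Lambda,\tilde\Lambda_\alpha)$ --- is indeed the sore point, but the situation is sharper than you suggest: the parity is not left open by the formal axioms, it is \emph{forced} by them, and it comes out against the literal statement. Reduce the defining cocycle mod $2$ over a triple of lifts whose base planes are pairwise transversal: for such a triple the form $q$ in the definition of $\Ki$ is nondegenerate on an $n$-dimensional space, where $n=\frac{1}{2}\dim\Sigma$, so $\Ki\equiv n \pmod 2$ and hence
\begin{equation*}
\Li(\tilde\Lambda_2,\tilde\Lambda_3)+\Li(\tilde\Lambda_1,\tilde\Lambda_3)+\Li(\tilde\Lambda_1,\tilde\Lambda_2)\equiv n \pmod 2 .
\end{equation*}
For odd $n$ the three values cannot all be even, so $\frac{1}{2}\Li(\cdot,\tilde\Lambda_\alpha)$ takes genuine half-integer values already on the transversal chart (for $\Sigma=\R^2$ \emph{every} transversal pair has odd Leray index); the general parity, which does require the explicit construction, is $\Li(\tilde\Lambda,\tilde\Lambda')\equiv n+\dim(\Lambda\cap\Lambda')\pmod 2$, which also destroys integrality on the strata $\dim(\Lambda\cap\Lambda_\alpha)=1$ when $n$ is even. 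So the statement as printed carries a normalization slip inherited from the source: de Gosson's chart uses the corrected integer label $m_\alpha(\tilde\Lambda)=\frac{1}{2}\bigl(\Li(\tilde\Lambda,\tilde\Lambda_\alpha)+n+\dim(\Lambda\cap\Lambda_\alpha)\bigr)$, and the transition function acquires the corresponding locally constant correction terms. This does not damage your architecture: the correction is constant on each fiber of the covering and on each component of the transversal chart, deck translates still shift the label by integers, so your injectivity, surjectivity and homeomorphism arguments go through verbatim with $m_\alpha$ in place of $\frac{1}{2}\Li$. It is also harmless for the way the theorem is used later in the paper (Step 6 of the proof of Theorem~\ref{thm:main_index_result}), where only differences of Leray indices with a fixed second argument along a fixed stratum appear, and those are even precisely by your deck computation.
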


The proof of this theorem and the last lemma, as well as many other applications of the Leray index can be found in~\cite{gosson}[Theorem 4.5].

We need two lemmas related to curves in the Lagrangian Grassmanian and its universal covering. We will use them only to prove the main Morse Theorem~\ref{thm:main_index_result}. So we just sketch the proofs.

\begin{lemma}
\label{lem:count_int}
For any countable set $S \in L(\Sigma)$ the set of Lagrangian planes, that intersect transversally any Lagrangian plane from $S$, is dense in $L(\Sigma)$.
\end{lemma}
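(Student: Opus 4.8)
The plan is to recognize this as a direct application of the Baire category theorem. Write the countable set as $S = \{\Lambda_1,\Lambda_2,\ldots\}$ and set $\cT_i = \Lambda_i^\pitchfork$, the collection of Lagrangian planes transversal to $\Lambda_i$. The set whose density we must establish is exactly $\bigcap_i \cT_i$, so it suffices to check that each $\cT_i$ is open and dense and that $L(\Sigma)$ is a Baire space; the intersection is then automatically dense.

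First I would verify that each $\cT_i$ is open and dense. Openness is immediate: in any affine chart $\Delta^\pitchfork$ built as in Section~\ref{sec:sympl_def}, the transversality condition $\Lambda \cap \Lambda_i = \{0\}$ becomes the nonvanishing of a determinant, which is an open condition. For density, recall that the complement $L(\Sigma)\setminus \cT_i$ is precisely the Maslov train $\cM_{\Lambda_i}$, whose top stratum $\cM^1_{\Lambda_i}$ already has codimension one (by the dimension formula for the strata in Section~\ref{sec:sympl_def}). Hence $\cM_{\Lambda_i}$ is a proper closed subset of positive codimension, so it has empty interior and $\cT_i$ is dense.

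Second, $L(\Sigma)$ is a compact smooth manifold, hence a locally compact Hausdorff space (equivalently, a complete metric space), and therefore a Baire space. The Baire category theorem then yields that the countable intersection $\bigcap_i \cT_i$ of open dense sets is dense (indeed comeager) in $L(\Sigma)$, which is exactly the assertion of the lemma.

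The argument is essentially routine, and the paper's own ``we just sketch the proofs'' remark confirms this. The only point that genuinely requires the symplectic input is the density of each single transversal set, which rests on the structural fact — recorded in the discussion of the Maslov train — that the non-transversal planes form a set of codimension at least one; everything else is a formal invocation of Baire.
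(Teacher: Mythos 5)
Your proof is correct and follows exactly the paper's own route: the paper also observes that each $\Lambda^\pitchfork$ is open, invokes the Baire category theorem for the countable intersection, and leaves the density of each $\Lambda^\pitchfork$ as the one step to be checked. You have simply filled in that remaining step (via the codimension of the Maslov train) and the Baire-space property of the compact manifold $L(\Sigma)$, both correctly.
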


\begin{proof}
The proof is a simple consequence of the Baire category theorem. Indeed, we know that $\Lambda^\pitchfork$ is an open set in $L(\Sigma)$. One should just prove that those sets are dense, then the intersection
$$
\bigcap_{\Lambda \in S} \Lambda^\pitchfork 
$$
must be dense and therefore non-empty~\cite[Remark 2.5.18]{piccione_tausk}.
\end{proof}

\begin{lemma}
\label{lem:homotop}
Any two simple monotone curves connecting $a,b \in L(\Sigma)$ are homotopic. 
\end{lemma}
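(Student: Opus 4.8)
The plan is to prove the stronger statement that $\gamma_0,\gamma_1 : [0,1] \to L(\Sigma)$ are homotopic \emph{rel endpoints}, by showing that the based loop $\ell = \gamma_0 * \bar\gamma_1$ (traverse $\gamma_0$ from $a$ to $b$, then $\gamma_1$ backwards from $b$ to $a$) has vanishing Maslov index. By Proposition~\ref{prop:maslov_closed} the map $\Mi$ induces an isomorphism $\pi_1(L(\Sigma)) \to \Z$, so $\Mi(\ell) = 0$ forces $[\ell] = 0$ in $\pi_1(L(\Sigma),a)$, which is exactly the assertion that $\gamma_0$ and $\gamma_1$ are homotopic with fixed endpoints. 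Thus the whole problem reduces to a single integer computation.

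To compute $\Mi(\ell)$ I would exploit two features at once: the index of a closed loop is independent of the reference plane (Proposition~\ref{prop:maslov_closed}), while monotonicity forces every intersection with the train to carry a definite sign. Write $\gamma_i \subset \Delta_i^\pitchfork$ as in the definition of a simple curve, and note $a,b \in \Delta_0^\pitchfork \cap \Delta_1^\pitchfork$. For any $\Pi$ transversal to both $a$ and $b$, additivity of the Maslov index under concatenation (the endpoints $a,b$ then lie off $\cM_\Pi$) gives $\Mi_\Pi(\ell) = \Mi_\Pi(\gamma_0) - \Mi_\Pi(\gamma_1)$. Since each $\gamma_i$ is monotone increasing, every transversal crossing of $\cM_\Pi^1$ is positively co-oriented by the definition of the co-orientation together with Definition~\ref{def:monotone}, so $\Mi_\Pi(\gamma_i) \ge 0$; for merely continuous monotone curves one uses instead $\Mi_\Pi(\gamma_i) = \ind_\Pi \gamma_i \ge 0$ coming from Definition~\ref{def:monotone2} and Lemma~\ref{lemm:maslov}(1).

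The key step is to choose the reference plane cleverly. The set of planes transversal to the entire compact curve $\gamma_0$, namely $\bigcap_{t} \gamma_0(t)^\pitchfork$, is open (a uniform determinant bound over $t\in[0,1]$ by compactness) and contains $\Delta_0$. Hence for $\Pi$ in a small neighbourhood of $\Delta_0$ the curve $\gamma_0$ avoids $\cM_\Pi$ altogether, so $\Mi_\Pi(\gamma_0)=0$ and therefore $\Mi(\ell) = -\Mi_\Pi(\gamma_1) \le 0$. Running the identical argument with a plane $\Pi'$ near $\Delta_1$ yields $\Mi_{\Pi'}(\gamma_1)=0$, whence $\Mi(\ell) = \Mi_{\Pi'}(\gamma_0) \ge 0$. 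Because $\Mi(\ell)$ is one and the same integer regardless of the reference plane, the two inequalities squeeze it to $\Mi(\ell)=0$, completing the proof.

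The main obstacle, and the only place needing genuine care, is the interplay of the three ingredients in the last paragraph: one must check that monotonicity really makes each arc's contribution nonnegative (so that an arc contributes $0$ precisely when it misses the train), and that the openness-plus-compactness argument legitimately produces a plane transversal to an \emph{entire} arc rather than just to finitely many of its points. Once these are secured, the two-sided bound is immediate. A secondary routine point, which should nonetheless be stated, is the additivity of $\Mi_\Pi$ under concatenation at the value where $a,b$ sit off $\cM_\Pi$, since this is what lets one split $\Mi_\Pi(\ell)$ into the two arc contributions in the first place.
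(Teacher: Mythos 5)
Your proof is correct, and it takes a genuinely different route at the core step than the paper does. The paper's (two-line) argument rests on Proposition~\ref{prop:maslov}, which for a simple curve expresses $\Mi_\Pi$ through the Kashiwara indices of its endpoints, $\frac{1}{2}\left( \Ki(\Delta,\Lambda_0,\Pi) - \Ki(\Delta,\Lambda_1,\Pi) \right)$, combined with Proposition~\ref{prop:homotopy}, which says these indices depend only on the relative position of the planes; hence the Maslov index of a simple monotone curve is determined by its endpoints alone, the two contributions cancel in the loop $\ell = \gamma_0 * \bar\gamma_1$, and Proposition~\ref{prop:maslov_closed} concludes exactly as in your first paragraph. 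You instead squeeze $\Mi(\ell)$ between two computations with different trains: simpleness kills the $\gamma_0$-contribution when the train is taken at $\Delta_0$, monotonicity makes the $\gamma_1$-contribution nonnegative, so $\Mi(\ell) = -\Mi_{\Pi}(\gamma_1) \leq 0$, and symmetrically $\Mi(\ell) = \Mi_{\Pi'}(\gamma_0) \geq 0$; train-independence of the index of a closed curve forces $\Mi(\ell)=0$. This avoids the Kashiwara formula and Proposition~\ref{prop:homotopy} entirely, at the price of needing additivity of $\Mi_\Pi$ under concatenation and the sign argument for monotone arcs. Two remarks. First, your openness-plus-compactness step is superfluous: you may take $\Pi = \Delta_0$ and $\Pi' = \Delta_1$ outright, since $a,b \in \gamma_0 \subset \Delta_0^\pitchfork$ (resp.\ $a,b\in\gamma_1\subset\Delta_1^\pitchfork$) already places the concatenation points off $\cM_{\Delta_0}$ (resp.\ $\cM_{\Delta_1}$), and transversality to the whole arc is the hypothesis of simpleness itself. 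Second, the nonnegativity $\Mi_\Pi(\gamma_i)\geq 0$ for a merely continuous monotone arc is precisely where you still lean on the paper's machinery (Definition~\ref{def:monotone2} together with Proposition~\ref{prop:maslov} and Lemma~\ref{lemm:maslov}); you rightly flag this as the delicate point, and it is used essentially --- without monotonicity the lemma is false, as the two complementary simple arcs joining $a$ to $b$ in $L(\Sigma)\cong S^1$ for $\dim\Sigma = 2$ show. Your version also yields the slightly stronger ``homotopic rel endpoints'' conclusion, which is in fact what Remark~\ref{rem:lift} needs.
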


This lemma is a direct consequence of Propositions~\ref{prop:homotopy} and~\ref{prop:maslov}. They show that Maslov index of monotone curves depends only on the relative position of its end-points.

\begin{remark}
\label{rem:lift}
This result has an important application that we will use later. Let $\Lambda(t)$ be a curve in $L(\Sigma)$ with a finite number of discontinuities. Then there is a canonical way of lifting the curve to the universal covering $\widetilde{L(\Sigma)}$. One has to glue all the discontinuities with simple monotone curves and lift it to the universal covering and then delete the lifts of the glued in monotone parts. The result will not depend on the way of gluing. Indeed, the previous lemma shows that two monotone curves are homotopic and therefore their lifts starting at the same point will also end at the same point. 
\end{remark}

\subsection{Morse-type theory}
\label{sec:sympl_morse}

Now we are ready to state and prove Morse-type theorems. The simplest one allows us to compute the dimension of the kernel of the Hessian. This is Lemma~\ref{lem:ker} from Appendix~\ref{app:existence}.

The next step is to extract the information about the index of the Hessian from the Jacobi curve. A simple theorem of such type is Theorem~\ref{thm:index_add} that is used in the proof of existence and uniqueness. But it is clear that in general one can not replace inequality in the statement by an equality. The right hand side of~\eqref{eq:ineq_theorem_long_name} is limited by the dimension of the manifold $M$, while the jump in the index can be arbitrary large. Nevertheless, when we take piecewise constant functions we can reconstruct exact formulas. The idea is that when we add some constant variations, using our algorithm (see Theorem~\ref{thm:algorithm}) we can track exactly how the $\cL$-derivative changes and use this to obtain an exact formula for the index. This will be the main building block in the general Morse index theorem, that we will prove immediately after.

We will rely heavily on the following two lemmas from linear algebra
\begin{lemma}
\label{lem:important}
Suppose that $Q$ is a quadratic form defined on $\R^N$ and let $V\subset \R^N$ be some subspace. If we define
$$
V^\perp = \{x \in \R^N \;:\; Q(x,y)= 0,\forall y \in V )\},
$$
then
\begin{equation}
\label{eq:index_formula}
\ind^+ Q = \ind^+ Q|_V + \ind^+ Q|_{V^\perp} + \dim(V\cap V^\perp) - \dim(V\cap \ker Q)
\end{equation}
\end{lemma}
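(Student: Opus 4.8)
The plan is to reduce the statement to the case where $Q$ is nondegenerate by passing to the quotient $\bar V = \R^N/\ker Q$, and then to prove the nondegenerate version by an explicit $Q$-orthogonal decomposition that isolates a hyperbolic block. Write $R = \ker Q$ and let $\pi : \R^N \to \bar V$ be the projection, carrying the induced nondegenerate form $\bar Q$ with $\bar Q(\pi x,\pi y) = Q(x,y)$. Since the radical of a form never contributes to either inertia index, one has $\ind^+ Q = \ind^+ \bar Q$; moreover, because $V\cap R$ lies in the radical $V\cap V^\perp$ of $Q|_V$ and $R$ lies in the radical of $Q|_{V^\perp}$, the same principle gives $\ind^+ Q|_V = \ind^+ \bar Q|_{\pi(V)}$ and $\ind^+ Q|_{V^\perp} = \ind^+ \bar Q|_{\pi(V^\perp)}$. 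The observation that makes the quotient useful is that $\pi(V^\perp)$ is exactly the $\bar Q$-orthogonal complement of $\pi(V)$ in $\bar V$: indeed $\bar Q(\pi x,\pi v) = Q(x,v)$ vanishes for all $v\in V$ precisely when $x\in V^\perp$. A short diagram chase then gives $\pi(V)\cap\pi(V^\perp) = \pi(V\cap V^\perp)$, so that $\dim\bigl(\pi(V)\cap\pi(V^\perp)\bigr) = \dim(V\cap V^\perp) - \dim(V\cap R)$, which is precisely the combination of dimension terms on the right-hand side of \eqref{eq:index_formula}.

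After this reduction it remains to prove, for a nondegenerate form on a space $U$ and a subspace $A$ with genuine orthogonal complement $A^\perp$, the identity $\ind^+ Q = \ind^+ Q|_A + \ind^+ Q|_{A^\perp} + \dim(A\cap A^\perp)$. I set $K = A\cap A^\perp$, the common radical of $Q|_A$ and $Q|_{A^\perp}$, and choose complements $A = A_0\oplus K$ and $A^\perp = A_0'\oplus K$, so that $Q|_{A_0}$, $Q|_{A_0'}$ are nondegenerate with $\ind^+ Q|_A = \ind^+ Q|_{A_0}$ and $\ind^+ Q|_{A^\perp} = \ind^+ Q|_{A_0'}$. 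From $A_0\subset A$ and $A_0'\subset A^\perp$ one checks directly that $A_0$, $A_0'$, $K$ are mutually $Q$-orthogonal and that $H_0 = A_0\oplus A_0'$ is nondegenerate, whence $U = H_0\oplus H_0^\perp$ orthogonally. A dimension count using $\dim A + \dim A^\perp = \dim U$ yields $\dim H_0^\perp = 2\dim K$, while $K\subseteq H_0^\perp$ is isotropic of dimension $\tfrac12\dim H_0^\perp$; thus $K$ is a Lagrangian subspace of the nondegenerate space $H_0^\perp$, forcing $Q|_{H_0^\perp}$ to be hyperbolic with $\ind^+ Q|_{H_0^\perp} = \dim K$. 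Summing the contributions over the orthogonal splitting $U = A_0\oplus A_0'\oplus H_0^\perp$ gives $\ind^+ Q = \ind^+ Q|_{A_0} + \ind^+ Q|_{A_0'} + \dim K$, which is the desired formula.

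I expect the main obstacle to be the careful bookkeeping of the degenerate directions rather than any single hard idea: one must verify that passing to $\bar V$ leaves all three positive indices unchanged (which requires identifying exactly which subspaces are the radicals of the various restrictions) and that the two dimension corrections assemble into $\dim(V\cap V^\perp) - \dim(V\cap\ker Q)$. The conceptual heart is the recognition that, once $Q$ is nondegenerate, the obstruction to a clean orthogonal splitting $U = A\oplus A^\perp$ is concentrated in the radical $K = A\cap A^\perp$, which embeds as a Lagrangian plane in a hyperbolic complement and therefore contributes exactly $\dim K$ to $\ind^+ Q$; this is what produces the $+\dim(V\cap V^\perp)$ term. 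All remaining transversality and dimension identities, including $\dim V^\perp = N - \dim V + \dim(V\cap\ker Q)$, follow from rank-nullity applied to the pairing $(v,x)\mapsto Q(v,x)$ on $V\times\R^N$.
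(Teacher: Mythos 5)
Your proof is correct, and in fact the paper itself never proves this lemma: it is stated as a known linear-algebra fact (with a remark extending it to Hilbert spaces) and used as a black box in the proofs of Theorem~\ref{thm:morse_apr} and Theorem~\ref{thm:index_add}, so there is no in-paper argument to compare against; your write-up supplies the missing proof. Both halves of your argument check out. In the reduction step, the crucial point is the inclusion $\ker Q \subseteq V^\perp$: it is exactly what makes your ``diagram chase'' $\pi(V)\cap\pi(V^\perp) = \pi(V\cap V^\perp)$ work (if $\pi(v)=\pi(w)$ with $v\in V$, $w\in V^\perp$, then $v-w\in\ker Q\subseteq V^\perp$ forces $v\in V\cap V^\perp$), and it also gives $\ker\bigl(\pi|_{V\cap V^\perp}\bigr) = V\cap\ker Q$, which is where the correction term $-\dim(V\cap\ker Q)$ comes from; you might spell this out rather than leave it as a chase, and you should also rename the quotient space, since writing $\bar V = \R^N/\ker Q$ collides with the subspace $V$ of the statement. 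In the nondegenerate step, the identification of $K = A\cap A^\perp$ as the common radical of $Q|_A$ and $Q|_{A^\perp}$ uses $(A^\perp)^\perp = A$, which is where nondegeneracy enters, and the count $\dim H_0^\perp = 2\dim K$ correctly combines rank--nullity ($\dim A + \dim A^\perp = \dim U$) with the splittings $A = A_0\oplus K$, $A^\perp = A_0'\oplus K$; the conclusion that a half-dimensional isotropic subspace of a nondegenerate space forces signature $(\dim K,\dim K)$ is standard, since a form of signature $(p,q)$ has maximal isotropic dimension $\min(p,q)$. One caveat on scope: your dimension count $\dim H_0^\perp = 2\dim K$ is genuinely finite-dimensional, so the extension claimed in the paper's Remark~\ref{rem:overdoing}-adjacent remark (continuous $Q$ on a Hilbert space with $\ind^+ Q<\infty$ and $V$ closed) does not follow verbatim from your argument; there one notes instead that $K$ is finite-dimensional (an isotropic subspace of the nondegenerate quotient has dimension at most $\ind^+\bar Q$) and constructs a dual isotropic complement to $K$ by hand, replacing the codimension bookkeeping by a duality argument.
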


\begin{lemma}
\label{lem:orthogonal}
Let $Q:V_2 \times V_2 \to \R$ be a quadratic form defined on a finite-dimensional space $V_2$, $A: V_2 \to \R^n$ be a linear map and $N\subset \R^n$ be a linear subspace. Take any subspace $V_1 \subset V_2$ and write $V_i^N = V_i \cap A^{-1}(N \cap \im A)$. Then the orthogonal complement of $V_1^N$ in $V_2^N$ with respect to $Q$ consists of vectors $v\in V_2^N$, for which there exists $\tilde \xi$ in the annihilator $N^\perp \subset (\R^n)^*$, s.t.
\begin{equation}
\label{eq:usual}
\langle \tilde \xi, Aw\rangle + Q(v,w) = 0, \qquad \forall w\in V_1. 
\end{equation}

Similarly $\ker Q\cap V_1^N$ consists of vectors $v\in V_1^N$, for which there exists $\tilde \xi$ in the annihilator $N^\perp \subset (\R^n)^*$, s.t. the equality above holds for all $w\in V_2$.

\end{lemma}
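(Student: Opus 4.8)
The plan is to recognize that both assertions are two instances of a single fact: a linear functional of the form $w \mapsto Q(v,w)$ annihilates a subspace of the shape $U \cap A^{-1}(N)$ precisely when it is pulled back from $\R^n/N$ along $A$, i.e. when it equals $w \mapsto \langle \xi, Aw\rangle$ for some $\xi \in N^\perp$. For the first claim I would take $U = V_1$, so that $U \cap A^{-1}(N) = V_1^N$ and the equation ranges over $w \in V_1$; for the second I would take $U = V_2$, so that $U \cap A^{-1}(N) = V_2^N$ and the equation ranges over $w \in V_2$. Here the relevant reading of $\ker Q$ is the kernel of $Q$ restricted to the ambient space $V_2^N$ (the space to which one applies Lemma~\ref{lem:important}), so that $\ker Q \cap V_1^N = \{v \in V_1^N : Q(v,w) = 0 \ \forall w \in V_2^N\}$, i.e. the $Q$-orthogonal complement of $V_2^N$ inside $V_1^N$.

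The easy inclusion I would dispatch first. Suppose $v$ lies in the relevant ambient space ($V_2^N$ for the first claim, $V_1^N$ for the second) and that there is $\xi \in N^\perp$ with $\langle \xi, Aw\rangle + Q(v,w) = 0$ for all $w \in U$. For any $w$ in the constrained subspace $U \cap A^{-1}(N)$ one has $Aw \in N$, hence $\langle \xi, Aw\rangle = 0$ and therefore $Q(v,w) = 0$; this places $v$ in the $Q$-orthogonal complement (respectively the kernel intersection) asserted by the lemma.

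For the converse I would argue by factorization. Writing $\pi_N : \R^n \to \R^n/N$ for the quotient projection, note that $U \cap A^{-1}(N) = \ker(\pi_N \circ A|_U)$. By hypothesis the functional $\ell(w) := -Q(v,w)$ vanishes on this kernel, so it descends to a well-defined linear functional on $\im(\pi_N \circ A|_U) \subset \R^n/N$; extending this (finite-dimensional linear algebra, or Hahn--Banach in the setting of the remark following Lemma~\ref{lem:important}) to a functional on all of $\R^n/N$ and pulling it back along $\pi_N$ yields $\xi \in (\R^n)^*$ with $\xi|_N = 0$, i.e. $\xi \in N^\perp$, and $\langle \xi, Aw\rangle = \ell(w) = -Q(v,w)$ for every $w \in U$. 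This is exactly the desired equation.

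The computations are routine, and I expect the only genuine subtlety to be bookkeeping rather than analysis. Specifically, one must keep track of the fact that in the first claim the test vectors $w$ run over all of $V_1$ (not merely $V_1^N$) while orthogonality is tested only against $V_1^N$, and correspondingly that in the second claim $w$ runs over all of $V_2$; pinning down the correct reading of $\ker Q$ --- namely the kernel of the restriction $Q|_{V_2^N}$ that makes the identity of Lemma~\ref{lem:important} applicable --- is the step where an error would most easily creep in.
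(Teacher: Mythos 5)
Your proof is correct, and it shares the paper's skeleton --- the easy inclusion by testing \eqref{eq:usual} on the constrained subspace, then the converse by producing a multiplier $\xi\in N^\perp$ representing the functional $w\mapsto -Q(v,w)$ --- but you implement the converse by a genuinely different mechanism. The paper endows $\R^n$ with a Euclidean metric, identifies $N^\perp$ with the orthogonal complement of $N$, picks a basis $e_i$ of a complement of $A^{-1}(N)$ whose images under $A$ are orthogonalized, writes $\xi=-\sum_i Q(v,e_i)\,|Ae_i|^{-2}Ae_i$ explicitly, and disposes of the second claim with ``the same argument''. You instead observe that $V_i^N=\ker(\pi_N\circ A|_{V_i})$ for the quotient projection $\pi_N:\R^n\to\R^n/N$, descend $-Q(v,\cdot)$ to the image of $\pi_N\circ A$, extend, and pull back. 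This buys you three things: both claims become literally one statement (take $U=V_1$ or $U=V_2$); the argument is metric-free and, as you note, passes essentially verbatim (with Hahn--Banach) to the Hilbert-space setting of the remark following Lemma~\ref{lem:important}; and it silently avoids two small imprecisions in the paper's write-up, namely that for the first claim the complement must be taken of $V_1^N$ inside $V_1$, where the test vectors live, not of $A^{-1}(N)$ in $V_2$, and that the vectors $Ae_i$ need not themselves lie in $N^\perp$ (only their components modulo $N$ do), so the paper's displayed formula really requires $Ae_i$ to be replaced by those projections. What the paper's construction buys in exchange is an explicit formula for the multiplier $\xi$, in keeping with the computational spirit of Theorem~\ref{thm:algorithm}. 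Finally, your reading of $\ker Q$ as $\ker\bigl(Q|_{V_2^N}\bigr)$, tested against all of $V_2^N$, is exactly the one the paper relies on when it combines this lemma with Lemma~\ref{lem:important} in Step 4 of the proof of Theorem~\ref{thm:morse_apr}, so the bookkeeping subtlety you flagged is resolved the right way.
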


We start with the computation of the Morse index of the Hessian restricted to piecewise constant variations.

\begin{theorem}
\label{thm:morse_apr}
Let $D = \{0 = t_0 < t_1 < ... < t_N  = t\}$ be a partition of the interval $[0,t]$ and let $V_D$ be a direct product of the space of variations of the initial point and piece-wice constant variations with jumps at moments of time $t_i$. We denote by $V_i \subset V_D$ the subspace of $V_D$ of variations that are zero for $t > t_i $ and $V_i^0 = V_i \cap \ker dE_{N_0,t}[\tilde{\omega}]$. 
Then the following formula is true
\begin{equation}
\label{eq:maslov_index}
\ind^- \Hess (E_{N_0,t},\nu J_t)[\tilde \omega, \lambda(t)]|_{V_D^0} = \sum_{i=-1}^{N} \ind_\Pi(\cL_i, \cL_{i+1}) + \dim\left( \bigcap_{i=-1}^N \cL_i\right) - n,
\end{equation}
where for simplicity we wrote $\cL_i = \cL (E_{N_0,t},\nu J_t)[\tilde \omega, \lambda(t)]|_{\Omega_{N_0}^t \cap V_i}$, $\cL_{-1} = \cL_{N+1} = \Pi$ and $\cL_0 = T_{\tilde{q}(0)}^\perp N_0 \times T_{\tilde{q}(0)} N_0$.
\end{theorem}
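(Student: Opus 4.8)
The plan is to reduce the statement to the positive inertia index of the single quadratic form $Q(v,w) = \langle\lambda(t),d^2 E_{N_0,t}[\tilde\omega](v,w)\rangle - \nu\, d^2 J_t[\tilde\omega](v,w)$ and then add the variation blocks one at a time, comparing each jump of the index with $\ind_\Pi(\cL_i,\cL_{i+1})$. First, since $\Hess(E_{N_0,t},\nu J_t)[\tilde\omega,\lambda(t)] = -Q|_{\ker dE_{N_0,t}[\tilde\omega]}$ and $V_D^0\subset\ker dE_{N_0,t}[\tilde\omega]$, we have $\ind^-\Hess(E_{N_0,t},\nu J_t)[\tilde\omega,\lambda(t)]|_{V_D^0} = \ind^+ Q|_{V_D^0}$. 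I would then set up the filtration $V_{-1}^0 = \{0\}\subset V_0^0\subset\cdots\subset V_N^0 = V_D^0$, noting that the $\cL$-derivative over the trivial space of variations is the vertical plane, consistently with $\cL_{-1}=\Pi$. Applying Lemma~\ref{lem:important} with ambient space $V_i^0$ and subspace $V_{i-1}^0$ yields, for each $i$,
\begin{equation*}
\ind^+ Q|_{V_i^0} = \ind^+ Q|_{V_{i-1}^0} + \ind^+ Q|_{W_i} + \dim(V_{i-1}^0\cap W_i) - \dim\bigl(V_{i-1}^0\cap\ker Q|_{V_i^0}\bigr),
\end{equation*}
where $W_i$ is the $Q$-orthogonal complement of $V_{i-1}^0$ inside $V_i^0$; summing over $i$ exhibits $\ind^+ Q|_{V_D^0}$ as a telescoping sum of per-step contributions.

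The core is to identify each per-step contribution with a symplectic quantity. Using Lemma~\ref{lem:orthogonal} with $A = dE_{N_0,t}[\tilde\omega]$ and $N=\{0\}$, the relative complement $W_i$ and the kernel $V_{i-1}^0\cap\ker Q|_{V_i^0}$ are precisely the solution sets of the $\cL$-derivative equation \eqref{eq:master_or} over $V_{i-1}$ and $V_i$. Since every $v\in V_i^0$ satisfies $dE_{N_0,t}[\tilde\omega](v)=0$, the associated vectors $(\xi, dE_{N_0,t}[\tilde\omega](v))$ lie in $\Pi$, so the kernel term equals $\dim(\cL_{i-1}\cap\cL_i\cap\Pi)$ and $\ind^+ Q|_{W_i}$ is governed by the relative position of $\cL_{i-1}$ and $\cL_i$. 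Feeding in Theorem~\ref{thm:algorithm}, which describes explicitly how $\cL_{i-1}$ becomes $\cL_i$ upon adjoining the block $\R^k\chi_{[t_{i-1},t_i]}$, reduces $\ind^+ Q|_{W_i}$ to the positive index of the finite form $Q_R$ on $E^\perp$ and identifies $L=\cL_{i-1}\cap\cL_i$. The remaining, purely symplectic step is to match this index, together with the two dimension corrections, against $\ind_\Pi(\cL_{i-1},\cL_i) = \ind^+ q + \tfrac12\dim\ker q$, where $q$ is the index form \eqref{eq:form_index} on $(\cL_{i-1}+\cL_i)\cap\Pi$.

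It then remains to assemble the telescoped sum into the stated right-hand side. The first filtration step $V_{-1}^0\to V_0^0$ adjoins the initial-point variations and is handled directly by the transversality condition, producing $\cL_0 = T_{\tilde q(0)}^\perp N_0\times T_{\tilde q(0)}N_0$ and the $i=-1$ summand $\ind_\Pi(\Pi,\cL_0)$. The genuinely ``extra'' closing term $\ind_\Pi(\cL_N,\Pi)$ together with the constant $-n$ arises from property (4) of Lemma~\ref{lemm:maslov}, namely $\ind_\Pi(\Lambda,\Pi)=\tfrac12(n-\dim(\Lambda\cap\Pi))$, applied to the final plane; the Kashiwara cocycle of Lemma~\ref{lemm:kashiwara} is the natural device to carry out this rearrangement. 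Finally the per-step kernel corrections $\dim(\cL_{i-1}\cap\cL_i\cap\Pi)$ must collapse into the single global intersection $\dim\bigl(\bigcap_{i=-1}^N\cL_i\bigr) = \dim(\Pi\cap\cL_0\cap\cdots\cap\cL_N)$, which is also the nullity of the Hessian computed by Lemma~\ref{lem:ker}. Monotonicity of the Jacobi curve (Definition~\ref{def:monotone2}, Proposition~\ref{prop:maslov}) guarantees that the half-integer summands add up to the integer $\ind^+ Q|_{V_D^0}$, so the closed-up sum is a genuine Maslov index.

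I expect the main obstacle to be exactly this per-step symplectic matching and the ensuing bookkeeping: proving that the analytic quantity $\ind^+ Q|_{W_i}$ equals the symplectic $\ind_\Pi(\cL_{i-1},\cL_i)$ up to precisely the corrections that telescope into $\dim(\bigcap_i\cL_i)-n$, with no residual half-integers, and verifying that the reduction supplied by Theorem~\ref{thm:algorithm} lines up $L$, $E$, $E^\perp$ and the forms $A_R$, $Q_R$ with the index form $q$ and its kernel. This is what upgrades the inequality of Theorem~\ref{thm:index_add} to an equality, which is possible only because the piecewise-constant structure makes the relevant spaces finite-dimensional and the algorithm exact.
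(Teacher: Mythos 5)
Your proposal follows essentially the same route as the paper's proof: reduce to $\ind^+ Q|_{V_D^0}$, telescope along the filtration $V_{-1}^0\subset V_0^0\subset\cdots\subset V_N^0$ via Lemma~\ref{lem:important}, identify the $Q$-orthogonal complements and kernels with solution sets of the $\cL$-derivative equation via Lemma~\ref{lem:orthogonal}, invoke Theorem~\ref{thm:algorithm} to upgrade the inequality of Theorem~\ref{thm:index_add} to the exact matching $\ind^+ Q|_{W_i}=\ind^+ q$, and close up the boundary terms with property~4 of Lemma~\ref{lemm:maslov}. You also correctly single out the genuine crux, which the paper resolves in its Steps~3--4: the per-step kernel terms are not simply $\dim(\cL_{i-1}\cap\cL_i\cap\Pi)$ but carry corrections $\sum_m\dim E_m$ (variations mapped to the same Jacobi vector) and $\dim\bigl(\bigcap_m\cL_m\bigr)$ (the zero variation), which cancel respectively pairwise and into the global intersection term --- whereas the Kashiwara cocycle and monotonicity considerations you mention are not actually needed.
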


\begin{proof}
As before we write
$$
Q= \lambda d^2 E_{N_0,t}[\tilde \omega] - \nu d^2J_t[\tilde \omega].
$$
As we have already mentioned, $\Hess (E_{N_0,t},\nu J_t)[\tilde \omega, \lambda(t)]$ is equal to $-Q|_{\ker dE_{N_0,t}}$ as a quadratic form. So it is enough to prove the formula with $\ind^+ Q|_{V_D^0}$ on the left-hand side.

We prove it by a recursive computation of $\ind^+ Q|_{V_{i+1}^0}$ in terms of $\cL_m$, $m\leq i$. The main tool will be the formula (\ref{eq:index_formula}). We denote by $Q_i$ the restriction of $Q$ to $V_i^0$ and by $(V_i^0)^\perp$ the orthogonal complement of $V_i^0$ with respect to $Q_{i+1}$. First we establish the formula for $\ind^+ {Q_{i+1}|_{(V_i^0)^\perp}}$ and then for $\dim(V_i^0 \cap (V_i^0)^\perp) - \dim(\ker Q_{i+1} \cap V_i^0)$ in terms of $\cL_i$.

\emph{Step 1.} We prove the following statement. Given two subspaces $U_1,U_2$ such that
$$
T_{\tilde{q}(0)}N_0 \subset U_1 \subset U_2 \subset T_{\tilde{q}(0)}N_0\times L^2_k[0,t] \approx \Omega_{N_0}^t,
$$ 
we claim that the subspace $(U_1^{0})^\perp$ is equal to a subspace $W_2\subset U^0_2$ which consists of $(\zeta, v_2(\tau)) \in U_2^0$, s.t. there exists $\eta \in \cL_0$ for which the following conditions are satisfied
$$
\pi (\eta) = \zeta,
$$
\begin{equation}
\label{eq:def_w2}
\int_0^t  \sigma \left( \eta + \int_0^\tau X(\theta) v_2(\theta) d\theta, X(\tau) v_1(\tau) \right) + b(\tau)(v_2(\tau),v_1(\tau)) d\tau = 0,\quad  \forall v_1(\tau)\in U_1 
\end{equation}

This is a consequence of Lemma~\ref{lem:orthogonal}. Indeed, as we have discussed in Section~\ref{sec:optim} vector fields $X(\tau)$ are lifts of $g'(\tau)$. Therefore from formula~\eqref{eq:first_der} in Appendix~\ref{app:chrono}, which gives an explicit form for the first variation, it follows that we can characterize the kernel of $d E_{N_0,t}[\tilde{\omega}]$ in terms of the Hamiltonian vector field $X(\tau)$ as
$$
\ker d E_{N_0,t}[\tilde{\omega}]= \left\{(v(\tau),\zeta) \in \Omega_{N_0}^t \;: \; \eta+\int_0^t X(\tau) v(\tau) d\tau \in \Pi, \forall \eta \in \cL_0, \pi(\eta)=\zeta\right\}.
$$
We apply Lemma~\ref{lem:orthogonal} with $A$ being equal to the operator
$$
A:(\zeta,v)\mapsto \int_0^t X(\tau) v(\tau) d\tau
$$
and $N = \Pi \times T_{\tilde{q}(0)} N_0 \subset T_{\lambda(0)}(T^*M)$. Then $\tilde \xi \in (T_{\lambda(0)}(T^*M))^*$ from Lemma~\ref{lem:orthogonal} must annihilate $N$. Since $\sigma$ is a non-degenerate symplectic form we can use it to identify $(T_{\lambda(0)}(T^*M))^*$ with $(T_{\lambda(0)}(TM))$ and in this case the corresponding vector $\xi \in N^\perp \simeq N^\angle \simeq T_{q_0}^\perp N_0$ and hence
$$
\langle \tilde\xi, Av_1 \rangle = \sigma \left( \xi,\int_0^t X_\tau v_1(\tau)d\tau \right)
$$
from which the statement follows.

\emph{Step 2.} We have by definition and formula~\eqref{eq:sec_deriv} that the subspace $(V_i^{0})^\perp$ is equal to the space of variations $(\zeta,v(\tau), \alpha) \in V_{i+1}^0$, s.t.
$$
\int_0^{t_{i}} \sigma \left(\zeta + \int_0^\tau X(\theta) v(\theta) d\theta , X(\tau) w(\tau)\right) + b(\tau) (v(\tau),w(\tau)) d\tau = 0, \quad \forall w(\tau) \in V_i^0.
$$
Then from the step 1 it follows that $(V_i^{0})^\perp$ is actually equal to the space $W_i$ of vectors $(\zeta,v(\tau), \alpha) \in V_i \times \R^k$, s.t. there exists $\eta \in \cL_0$ for which $\pi(\eta)=\zeta$ and
\begin{equation}
\label{eq:def_w2_new}
\int_0^{t_i} \sigma \left( \eta + \int_0^\tau X(\theta) v(\theta) d\theta, X(\tau) w(\tau) \right) + b(\tau)(v(\tau),w(\tau)) d\tau = 0, \qquad \forall w(\tau) \in V_i,
\end{equation}
\begin{equation}
\label{eq:some_eq}
\eta + \int_0^{t_i} X(\tau) v(\tau) d\tau + \int_{t_i}^{t_{i+1}} X(\tau) d\tau \cdot \alpha\ \in \Pi.
\end{equation}
We denote by
$$
\lambda = \eta + \int_0^{t_i} X(\tau) v(\tau) d\tau.
$$
Then the first condition just tells us that $\lambda\in \cL_i$. 

Let $(\zeta,v(\tau),\alpha) \in W_i$ and $\eta = (\mu,\zeta)\in T^\perp_{\tilde{q}(0)}N_0\times T_{\tilde{q}(0)}N_0$, then we obtain
\begin{align*}
Q_{i+1}|_{V_i}(\zeta,v(\tau),\alpha)&= \int_0^{t_i} \sigma\left(\zeta + \int_0^\tau X(\theta) v(\theta) d\theta, X(\tau) v(\tau) \right) + b(\tau)(v(\tau),v(\tau)) d\tau + \\
&+\int_{t_i}^{t_{i+1}} \sigma\left(\zeta + \int_0^{t_i} X(\theta) v(\theta) d\theta + \int_{t_i}^\tau X(\theta) d\theta \cdot \alpha, X(\tau) \alpha \right) + b(\tau)(\alpha,\alpha) d\tau = \\
&= -\sigma\left( \mu, \int_0^{t_i} X(\tau) v(\tau) d\tau  \right) + \\
&+\int_{t_i}^{t_{i+1}} \sigma\left(\zeta + \int_0^{t_i} X(\theta) v(\theta) d\theta + \int_{t_i}^\tau X(\theta) d\theta \cdot \alpha, X(\tau) \alpha \right) + b(\tau)(\alpha,\alpha) d\tau =\\
&=  \int_{t_i}^{t_{i+1}} \sigma\left( \lambda + \int_{t_i}^\tau X(\theta) d\theta \cdot \alpha, X(\tau) \alpha \right) + b(\tau)(\alpha,\alpha) d\tau
\end{align*}
where in the last equality we have used a consequence of (\ref{eq:some_eq})
$$
-\sigma\left( \mu, \int_0^{t_i} X(\tau) v(\tau) d\tau \right) = -\sigma\left( \mu, \eta+ \int_0^{t_i} X(\tau) v(\tau) d\tau \right) = \sigma\left( \mu, \int_{t_i}^{t_{i+1}} X(\tau) d\tau \cdot \alpha\right)
$$
since $\mu,\eta \in \cL_0$.

Thus we have shown that $Q_{i+1}|_{W_i}$ is equal to the form
$$
P(\lambda,\alpha) = \int_{t_i}^{t_{i+1}} \sigma\left( \lambda + \int_{t_i}^\tau X(\theta) d\theta \cdot \alpha, X(\tau) \alpha \right) + b(\tau)(\alpha,\alpha) d\tau
$$
defined on a finite-dimensional space
$$
S = \left\{(\lambda,\alpha) \in \cL_i\times \R^k \; : \;\lambda + \int_{t_i}^{t_{i+1}} X(\tau) d\tau \cdot \alpha \in \Pi  \right \}.
$$

Now we consider the quadratic form $q$ from the definition of the positive Maslov index defined on $(\cL_i + \cL_{i+1})\cap \Pi$. Let
\begin{equation}
\label{eq:subspace}
\lambda_1,\lambda_2 \in \cL_i, \qquad \lambda_2 + \int_{t_i}^{t_{i+1}} X(\tau) d\tau \cdot \alpha \in \cL_{i+1}, \qquad \lambda_1 + \lambda_2 + \int_{t_i}^{t_{i+1}} X(\tau) d\tau \in \Pi.
\end{equation}
Then from the definition of $\cL_{i+1}$ we obtain
\begin{align*}
q(\lambda_1,(\lambda_2,\alpha)) &= \sigma\left( \lambda_1, \lambda_2 + \int_{t_i}^{t_{i+1}} X(\tau) d\tau \cdot \alpha  \right) = \sigma\left( \lambda_1 + \lambda_2 - \lambda_2, \lambda_2 + \int_{t_i}^{t_{i+1}}X(\tau)d\tau \cdot \alpha \right) =\\
&=\sigma\left( \lambda_1 + \lambda_2, \int_{t_i}^{t_{i+1}} X(\tau) d\tau \cdot \alpha  \right) - \sigma\left( \lambda_2, \int_{t_i}^{t_{i+1}} X(\tau) d\tau \cdot \alpha  \right) = \\
&= \int_{t_i}^{t_{i+1}} \sigma\left( \lambda_1 + \lambda_2 + \int_{t_i}^{\tau} X(\theta) d\theta \cdot \alpha, X(\tau) \alpha \right) + b(\tau)(\alpha,\alpha)d\tau.
\end{align*}
If we denote
$$
\tilde{S} = \{(\lambda_1 + \lambda_2,\alpha) \in \cL_i\times \R^k \; : \; \lambda_j,\alpha \textit{ satisfy (\ref{eq:subspace})}\},
$$
then we have that $\tilde{S}\subset S$ and $P|_{\tilde{S}} = q|_{\tilde{S}}$. So
\begin{equation*}
\ind^+ Q_{i+1}|_{(V_i^0)^\perp} \geq \ind^+ q.
\end{equation*}

\emph{Step 3.} In order to prove the previous inequality we could have applied directly Theorem~\ref{thm:index_add} from the Appendix~\ref{app:existence}. However in order to prove the other inequality, we need the specific expressions for $Q_{i+1}|_{W_i}$ and $q$ obtained earlier. We want to show, that all $\lambda,\alpha$, that actually give a contribution to the index of $P$, lie in $\tilde{S}$. And indeed, this is just a consequence of our algorithm.

Take $(\lambda,\alpha) \in S$. In Theorem~\ref{thm:algorithm} we have defined subspaces $L\subset \cL_i$ and $E\subset \R^k$, which from their definition do not give contributions to the $\cL$-derivative and can be seen to lie in the kernel of $P$. Thus it is enough to consider $P$ on any complementary subspace $L^\perp$ and $E^\perp$. But from Theorem~\ref{thm:algorithm} we know, that for any $\alpha \in E^\perp$ there exists a unique $\lambda_2 \in L^\perp \subset \cL_i$, s.t.
$$
\lambda_2 + \int_{t_i}^{t_{i+1}} X(\tau) d\tau \cdot \alpha \in \cL_{i+1}.
$$
Thus we can take $\lambda_1 = \lambda - \lambda_2$ and then it follows that $(\lambda,\alpha) \in \tilde{S}$, which proves that $S\subset \tilde S$ and
\begin{equation*}
\ind^+ Q_{i+1}|_{(V_i^0)^\perp} \leq \ind^+ q.
\end{equation*}
Then from the definition of the positive Maslov index, we have
\begin{align}
\label{eq:step1}
\ind^+ Q_{i+1}|_{(V_i^0)^\perp} = \ind_\Pi(\cL_i,\cL_{i+1}) &- \frac{1}{2}\left( \dim(\cL_i\cap \Pi) + \dim(\cL_{i+1}\cap \Pi) \right) \\
&+ \dim\left( \cL_i\cap \cL_{i+1}\cap \Pi \right)\nonumber
\end{align}

Using exactly the same arguments one can prove the formula for $Q_1$, that gives the base of the induction
\begin{align}
\ind^+ Q_1 &= \ind_\Pi(\cL_0,\cL_1) - \frac{1}{2}\left( \dim(\cL_0\cap \Pi) + \dim(\cL_{1}\cap \Pi) \right) + \dim\left( \Pi\cap \cL_0 \cap \cL_{1} \right).  \label{eq:step2}
\end{align}

\emph{Step 4}. Now we obtain an expression for $\dim (V_i^0 \cap (V_i^0)^\perp) - \dim (\ker Q_{i+1} \cap V_i^0)$ in terms of $\cL_m$, $m\leq i+1$. Here again our algorithm plays the central role. It gives us a sequence of maps $P_i$
$$
T_{\tilde{q}(0)}^\perp N_0 \times T_{\tilde{q}(0)}N_0 = \cL_0 \xrightarrow{P_0} \cL_1 \xrightarrow{P_1} ... \xrightarrow{P_{N-1}} \cL_N = \cL_t(V_D).  
$$
Using Lemma~\ref{lem:orthogonal} like in step 2 we can show that $v\in V_i$ belongs to $\in V_i^0 \cap (V_i^0)^\perp$ if, and only if, there exists $\eta \in \cL_0$ such that
$$
\eta + \int_0^t X(\tau) v(\tau) d\tau \in \cL_i \cap \Pi.
$$
Thus we need to see how many variations $v$ correspond to a fixed $\lambda\in \cL_i \cap \Pi$, and we can do it by inverting $P_i$ and going backwards from $\cL_N$ to $\cL_{N-1}$, then to $\cL_{N-2}$ and so on. Maps $P_i$ are indeed invertible, since they are surjective linear maps between spaces of the same dimension. Thus by fixing $\lambda \in \cL_i \cap \Pi$, we get a sequence $P_{i-1}^{-1}(\lambda)$, $P_{i-2}^{-1}\circ P_{i-1}^{-1}(\lambda)$ and so on, that can be seen as a sort of a solution of the Jacobi equation passing through $\lambda$. 

Let $L_m \in \cL_m$, $E_m \in \R^k$ be the subspace $L$, $E$ from the Theorem~\ref{thm:algorithm} for $V = V_m$ and $L^\perp_m$, $E^\perp_m$ be the orthogonal complements in $\cL_m$ and $\R^k$ correspondingly. Note that to each $\lambda \in L^\perp_m$ corresponds a unique variation $\alpha \chi_{[t_m,t_{m+1}]}$, $\alpha \in E^\perp_m$. But if $\beta \in E_m$ the variation $(\alpha+\beta)\chi_{[t_m,t_{m+1}]}$ corresponds to the same vector in the $\cL$-derivative. Therefore we have $\sum \dim E_m$ of variations that correspond to the same $\lambda \in \cL_i \cap \Pi$. However all vectors
$$
\lambda \in \bigcap_{i=0}^m \cL_i \cap \Pi = \bigcap_{i=-1}^m \cL_i 
$$
correspond to the same variation $(\zeta, v) \equiv (0,0)$. Thus we obtain the following formula

\begin{equation}
\label{eq:step3}
\dim (V_i^0 \cap (V_i^0)^\perp) = \dim (\cL_i\cap \Pi) + \sum_{m=0}^{i-1} \dim E_m - \dim \left( \bigcap_{m=-1}^i \cL_m \right).
\end{equation} 

Now we compute $\dim (\ker Q_{i+1} \cap V_i^0)$. Using Lemma~\ref{lem:orthogonal}, the same proof as in the step 1 shows, that $v\in \ker Q_{i+1} \cap V_i^0$ if, and only if, there exists $\eta\in \cL_0$ for which $\pi(\eta) = \zeta$, s.t.
$$
\int_0^{t_i} \sigma \left( \eta + \int_0^\tau X(\theta) v(\theta) d\theta, X(\tau) w(\tau) \right) + b(\tau)(v(\tau),w(\tau)) d\tau = 0, \qquad \forall w(\tau) \in V_i,
$$
$$
\sigma \left( \eta + \int_0^{t_i}X(\tau) v(\tau) d\tau, \int_{t_i}^{t_{i+1}} X(\tau) d\tau \cdot \alpha \right)=0, \qquad \forall \alpha \in \R^k,
$$
$$
\eta + \int_0^{t_i} X(\tau) v(\tau) d\tau  \in \Pi.
$$
If denote 
$$
\lambda = \eta + \int_0^{t_i} X(\tau) v(\tau) d\tau,
$$
then equivalently we can write $\lambda\in \cL_i\cap \cL_{i+1}\cap \Pi$.

Using same argument as for $\dim (V_i^0 \cap (V_i^0)^\perp)$ we get
\begin{equation}
\label{eq:step4}
\dim (\ker Q_{i+1} \cap V_i^0) = \dim (\cL_i \cap \cL_{i+1} \cap \Pi) + \sum_{m=0}^{i-1} \dim E_m - \dim \left( \bigcap_{m=-1}^{i+1} \cL_m \right).
\end{equation}

So we sum over all $i$ the formulas (\ref{eq:step1})-(\ref{eq:step4}) to obtain
$$
\ind^+ Q|_{V_D} = \sum_{i=0}^{N-1} \ind_\Pi(\cL_i\cap \cL_{i+1}) - \frac{1}{2}\dim(\cL_0 \cap \Pi) - \frac{1}{2}\dim(\Pi\cap \cL_N) + \dim \left( \bigcap_{i=-1}^N \cL_i\right).
$$
The final formula follows from $\cL_{-1}=\cL_{N+1} = \Pi$ and property 4 in Lemma~\ref{lemm:maslov}.

\end{proof}

This approximation lemma can now be used to prove a very general Morse theorem, that establishes relation between some symplectic invariants of the Jacobi curve and index of the Hessian. After fixing some partition $D$, we introduce the following curves using the notations of the previous theorem
$$
\Lambda^D(t) = 
\begin{cases}
\Pi & \text{ if } -1\leq t <  0 = t_0,\\ 
\cL_i & \text{ if } t_i<t\leq t_{i+1},\\
\Pi & \text{ if } t_N<t\leq t_N + 1.
\end{cases}
$$

We extend the Jacobi curve $\cL_t$ by assuming that $\cL_t = \Pi$ for $t\in[-1,0)\cup(T,T+1]$. Then by definition $\Lambda^D(t) \to \cL_t$ pointwise as a generalized limit. To shorten the notations we also write $\Sigma = T_{\lambda(0)}(T^*M)$.

\begin{theorem}
\label{thm:main_index_result}
Suppose that $\ind^- \Hess (E_{N_0,t},\nu J_t) [\tilde{\omega},\lambda(t) ]<\infty$ at a Lagrange point $(\tilde{\omega},\lambda(t))$. Let $\tilde{\Pi}$ be a point in the universal covering $\widetilde{L(\Sigma)}$, that projects to $\Pi \in L(\Sigma)$. Let $\Lambda^D:[-1,T+1]\to L(\Sigma)$ be the extended Jacobi curve built over the space of piecewise constant variations with discontinuities in $D$ as defined above, and $\tilde{\Lambda}^D(s)$ be the corresponding left-continuous curves in the universal covering with the same initial point $\tilde{\Lambda}^D(-1) = \tilde{\Lambda}_{-1}$, s.t. $\Lambda_{-1} = \Pi$. 

Then there exists a point-wise generalized limit $\tilde{\Lambda}^D(s) \to \tilde{\cL}_s$, such that $\tilde{\cL}_s$ is the lift of the Jacobi curve $\cL_s$ and 
$$
\ind^- \Hess (E_{N_0,t},\nu J_t) [\tilde{\omega},\lambda(t) ] = \frac{1}{2}\left( \Li(\tilde \cL_{T+1},\tilde{\Pi}) - \Li(\tilde \cL_{-1},\tilde{\Pi}) \right) + \dim \left( \bigcap_{s=0}^T \cL_s\cap \Pi \right) - n.
$$
\end{theorem}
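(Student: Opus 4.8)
The plan is to start from the finite-dimensional approximation formula of Theorem~\ref{thm:morse_apr}, which for a fixed partition $D$ expresses $\ind^- \Hess(E_{N_0,t}, \nu J_t)[\tilde\omega, \lambda(t)]|_{V_D^0}$ as $\sum_{i=-1}^N \ind_\Pi(\cL_i, \cL_{i+1}) + \dim(\bigcap_{i=-1}^N \cL_i) - n$, then to convert the sum of positive Maslov indices into a Leray index difference, and finally to pass to the limit over refinements of $D$.

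First I would reinterpret the sum $\sum_{i=-1}^N \ind_\Pi(\cL_i, \cL_{i+1})$ as a genuine Maslov index. Since each term is nonnegative by the first item of Lemma~\ref{lemm:maslov}, I connect the consecutive planes $\Pi = \cL_{-1}, \cL_0, \ldots, \cL_N, \cL_{N+1} = \Pi$ by simple monotone arcs (possible by the remark preceding Proposition~\ref{prop:maslov}), obtaining a closed monotone curve $\hat\Lambda^D$; by the equality clause of Proposition~\ref{prop:maslov}, $\Mi(\hat\Lambda^D) = \sum_{i=-1}^N \ind_\Pi(\cL_i, \cL_{i+1})$. This glued curve is precisely the one used to lift $\Lambda^D$ in Remark~\ref{rem:lift}, so its lift agrees with $\tilde\Lambda^D$ at the endpoints; choosing $\tilde\Lambda_{-1} = \tilde\Pi$ so that $\Li(\tilde\cL_{-1}, \tilde\Pi) = 0$, the second item of Lemma~\ref{lemm:leray} relates the Leray difference to $\Mi(\hat\Lambda^D)$. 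After a careful check of orientations, using the sign conventions fixed at the beginning of Section~\ref{sec:symp_and_morse} and Definition~\ref{def:monotone} (under which minimum problems give monotone increasing curves), this yields for each $D$
\[
\ind^- \Hess|_{V_D^0} = \tfrac{1}{2}\bigl(\Li(\tilde\Lambda^D(T+1), \tilde\Pi) - \Li(\tilde\Lambda^D(-1), \tilde\Pi)\bigr) + \dim\Bigl(\bigcap_{i=-1}^N \cL_i\Bigr) - n.
\]
Fixing the overall sign here is delicate but mechanical.

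It then remains to pass to the limit as $D$ runs over the directed set of partitions. Because $\ind^- \Hess$ is finite, the restrictions $\ind^- \Hess|_{V_D^0}$ increase and stabilize to $\ind^- \Hess$ once $V_D^0$ exhausts enough of the kernel, the variation spaces being dense by the argument of Lemma~\ref{lem:continuity}. On the symplectic side $\Lambda^D(s) \to \cL_s$ pointwise as a generalized limit, and I must upgrade this to convergence of the lifts $\tilde\Lambda^D(s) \to \tilde\cL_s$ in $\widetilde{L(\Sigma)}$, defining $\tilde\cL_s$ as this limit and verifying via the gluing recipe of Remark~\ref{rem:lift} that it is a genuine lift of $\cL_s$. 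Finiteness of the index together with monotonicity bounds the total number of Maslov-train crossings, so only finitely many jumps of the lift are nontrivial and the endpoint Leray index $\Li(\tilde\Lambda^D(T+1), \tilde\Pi)$ stabilizes; simultaneously $\bigcap_{i=-1}^N \cL_i$ decreases to $\Pi \cap \bigcap_{s=0}^T \cL_s$, matching the intersection term in the statement.

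The main obstacle is this last step: ensuring that the integer-valued Leray index of the endpoint lift does not change in the generalized limit. Since $\Li$ takes discrete values and the partitions form only a net rather than a sequence, I cannot invoke continuity directly; instead I would argue that once $D$ is fine enough to separate the finitely many times at which $\cL_s$ meets the Maslov train, further refinement leaves the glued lift, hence $\tilde\Lambda^D(T+1)$ and its Leray index, unchanged. This is exactly where the left-continuity of $\cL_t$ from Lemma~\ref{lem:continuity} and the homotopy invariance of monotone connectors from Lemma~\ref{lem:homotop} do the real work.
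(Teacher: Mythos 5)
Your outline tracks the paper's proof through the approximation stage: reduction of the index computation to piecewise-constant variations, application of Theorem~\ref{thm:morse_apr}, conversion of $\sum_i \ind_\Pi(\cL_i,\cL_{i+1})$ into a Leray-index difference via Proposition~\ref{prop:maslov} and Lemma~\ref{lemm:leray}, and stabilization of the intersection term. The genuine gap is the final limiting step, which you correctly flag as the main obstacle but then resolve with an argument that fails on two counts. First, your premise that $\cL_s$ meets the Maslov train at only finitely many times is unjustified: a monotone curve of finite index can dwell in $\cM_\Pi$ on a whole interval (a constant curve with $\cL_s\cap\Pi\neq\{0\}$ has $\ind_\Pi$ equal to zero), and this is precisely why the correction term $\dim\bigl(\bigcap_{s}\cL_s\cap\Pi\bigr)$ appears in the statement --- finiteness of the index bounds crossings that contribute to the index, not intersection times as such. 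Second, refinement of $D$ does change the curve: every new partition point inserts a new plane, and neither Lemma~\ref{lem:continuity} (left-continuity of $\cL_t$ in $t$, which says nothing about convergence of the net $\Lambda^D(t)$ at a fixed $t$) nor Lemma~\ref{lem:homotop} (homotopy of monotone connectors between a fixed pair of endpoints, which only makes the lift well defined for a single fixed $D$) gives stability of $\tilde{\Lambda}^D(T+1)$, or of its Leray index, under refinement along the net of partitions.

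What the paper actually does at this point (its Step 6) is the missing idea: it fixes the initial partition $D_0$ so that $V_{D_0}$ already contains a maximal negative subspace and captures the kernel (Lemma~\ref{lem:subspace}, Lemma~\ref{lem:ker}); Theorem~\ref{thm:index_add} then forces $\ind_\Pi(\Lambda^i(t),\Lambda^j(t))=0$ between any two approximations, and, after factoring out $\Pi\cap\cL_t(V_{D_0})$, a triangle-inequality and Kashiwara-bound argument (Lemmas~\ref{lemm:maslov} and~\ref{lemm:kashiwara}) produces a single Lagrangian plane $\Delta$ transversal to every $\Lambda^m(t)$ with $\ind_\Delta(\Lambda^m(t),\Pi)=0$. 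Hence each $\Lambda^m(t)$ connects to $\Pi$ by a monotone curve avoiding $\Delta$, the Leray index relative to $\tilde{\Delta}$ is locally constant along these connectors, the already-established index identity pins $\Li(\tilde{\Lambda}^m(t),\tilde{\Delta})$ to an eventually constant value, and the chart of Theorem~\ref{thm:univer_cover} then defines the limit lift $\tilde{\cL}_t$. Without some version of this uniform-transversal construction, your passage to the limit does not go through.
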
 

\begin{remark}
The lifts $\tilde{\Lambda}^D$ are constructed using simple monotone curves as described in Remark~\ref{rem:lift}.
\end{remark}

\begin{proof}

\emph{Step 1.} We are going to show that index of the Hessian restricted to the dense sub-space of piecewise constant functions coincides with the index of the Hessian on the whole kernel. This will allow us to apply directly Morse Theorem~\ref{thm:morse_apr}.
 
Map $dE_{N_0,T}$ is a continuous finite rank operator between an infinite dimensional Hilbert manifold that is locally isomoprhic to $T_{\tilde{q}(0)}N_0 \times L^2_k[0,T]$ and $T_{\tilde{q}(0)} M$. We have that the intersection of $\ker d E_{N_0,T} [\tilde{\omega}]$ with the space of piecewise constant functions is dense in $\ker d E_{N_0,T} [\tilde{\omega}]$. Indeed, by continuity of $d E_{N_0,T} [\tilde{\omega}]$ we have that its restriction to the subspace of piecewise constant functions must have the same rank. Therefore the subspace of linear piecewise constant functions splits into two disjoint subspaces: the intersection with the kernel of $d E_{N_0,T} [\tilde{\omega}]$ and a finite-dimensional complement that is isomorphic to its image. If $P_{ker}$ and $P_{fin}$ are two orthogonal projections to these subspace, then given a sequence of piecewise constant function $f_n$ converging to $f\in \ker d E_{N_0,T} [\tilde{\omega}]$, the projections of $P_{fin}f_n$ must converge to zero and $P_{\ker}f_n$ converge to $f$.

At the same time the quadratic form $Q$ is continuous in $T_{\tilde{q}(0)}N_0 \times L^2_k[0,t]$, therefore by restricting to a dense subspace we will get the same index. This implies that we can from the beginning compute the index of $Q$ restricted to the intersection of $\ker d E_{N_0,T}[\tilde{\omega}]$ with piecewise constant functions. 

\emph{Step 2.} We apply Theorem~\ref{thm:morse_apr} to a special sequence of spaces $V_D$. We take a finite number of piecewise constant functions $v_i$, s.t. they span a negative subspace of maximal dimension of the Hessian. Let $D_0$ be a splitting $0 = t_0 < t_1 < ... < t_{N_0} = T$, where $t_i$ are the discontinuity points of $v_i$. Then we can consider any sequence $\{0=t_0^m,...,t_{N_m}^m = T\} = D_m  \supset D_0$, s.t. $\max|t^m_{i+1}-t^m_i| \to 0$ and the corresponding subspace of piecewise constant variations $V^{m} = V^{D_m}$ as in Morse Theorem~\ref{thm:morse_apr}. We also use notations analogous to Theorem~\ref{thm:morse_apr} to define a subspace $V_i^m \subset V^{m}$ of functions that are zero for $t>t_i$ and $(V_i^m)^0 = V_i^m \cap \ker d E_{N_0,T}[\tilde{\omega}]$.

For what follows we will need the following sequence of curves:
$$
\Lambda^m_{t}(s) = 
\begin{cases}
\Pi & \text{ if } -1 \leq s < 0,\\
\Lambda^m(s) & \text{ if } 0\leq s \leq t, \\ 
\Pi & \text{ if } t < s \leq t+1; 
\end{cases}
$$
which are just closed extensions of the restrictions  $\Lambda^m(s)|_{[0,t]}$, where we have shortened the notation for $\Lambda^{D_m}(s)$ just to $\Lambda^{m}(s)$. 

By the additivity Lemma~\ref{lemm:add} and Morse Theorem~\ref{thm:morse_apr} for the piecewise constant approximations, we obtain
\begin{equation}
\label{eq:index}
\ind^- \Hess (E_{N_0,t},\nu J_t)[\tilde{\omega},\lambda(t)] + n = \ind_\Pi \Lambda^m_{t}(s) + \dim \left( \bigcap_{s=-1}^{t} \Lambda^m_{t}(s) \right).
\end{equation}
It only remains to study the limit of the right hand-side when $m\to\infty$.

\emph{Step 3.} We start by considering the second term containing the dimension of the intersections. Since $\cL_{-1}  = \Pi$ and $\Lambda^m_{t}(s) = \Lambda^m(s)$ for $s\in[0,t]$, we have that 
$$
\bigcap_{s=-1}^{t} \Lambda^m_{t}(s)  = \bigcap_{s=0}^{t}(\Pi \cap \Lambda^m(s)).
$$
From the Theorem~\ref{thm:algorithm} and the definition of $\Lambda^m_{t}(s)$ it follows that
$$
\bigcap_{s=0}^{t}(\Pi \cap \Lambda^m(s) ) = \left\{\mu \in T_{\tilde{q}(0)}^\perp N_0 \,:\, \sigma\left( \mu, \int_0^t X(\tau) w(\tau) d\tau \right)=0 \, : \, \forall w(\tau)\in V^m\cap \Omega_{N_0}^t \right\}.
$$
Therefore since $D_i \subset D_{i+1}$, we have that
$$
\bigcap_{s=0}^{t}(\Pi \cap \Lambda^m(s)) \subset \bigcap_{s=0}^{t}(\Pi \cap \Lambda^l(s) ), \qquad \forall l\leq m.
$$
Thus we get a sequence of nested subspace, and since $\Lambda^m(s)$ converge pointwise, this sequence must stabilize for $m$ large enough. 

We claim that
\begin{equation}
\label{eq:one_limit}
\bigcap_{s=0}^{t}(\Pi \cap \cL_s ) = \bigcap_{s=0}^{t}(\Pi \cap \Lambda^m(s) )
\end{equation}
for $m$ large enough. Again, the point-wise convergence implies that
$$
\bigcap_{s=0}^{t}(\Pi \cap \cL_s ) \subset \bigcap_{s=0}^{t}(\Pi \cap \Lambda^m(s)).
$$
The other inclusion holds true as well. Given $\mu \in \bigcap (\Pi \cap \cL_s )$ we can find a sequence
$$
\mu_m \in \bigcap_{s=0}^{t}(\Pi \cap \Lambda^m(s) )
$$
s.t. $\mu_m \to \mu$. But then for any $w \in V^{m}\cap \Omega_{N_0}^t$, we have:
$$
\sigma\left( \mu,\int_0^t X(\tau) w(\tau) d\tau \right) = \lim_{m\to \infty }\left( \mu_m,\int_0^t X(\tau) w(\tau) d\tau \right) = 0.
$$
Thus $\mu \in \Lambda^m(s)$ by definition for $m$ large enough and (\ref{eq:one_limit}) holds.

This way we have shown that
$$
\bigcap_{s=0}^{t}(\cL_s\cap \Pi)  = \bigcap_{s=0}^{t}(\Lambda^m_{t}(s)\cap \Pi), \qquad \forall t\in[0,T].
$$ 

\emph{Step 4.} To arrive at the final result we need to express $\ind_\Pi \Lambda^m(t)$ in terms of the Leray index. 

Fix some $m$ and sequence of Lagrangian planes $\Delta_i^m$, s.t. there exist monotone curves which connect $\Lambda^m_{t}(t_i)$ with $\Lambda^m_{t}(t_{i+1})$ and do not intersect the corresponding $\Delta_i^m$. Let $\Delta$ be any Lagrangian plane. Then from the Maslov index formula in Proposition~\ref{prop:maslov} and the definition of the Leray index we get
\begin{align*}
 \ind_{\Delta} &\Lambda^m_{t}(s) = \frac{1}{2}\sum_{i= -1}^{N_m} \left(   \Ki(\Delta^m_i, \Lambda^m_{t}(t_i), \Delta) - \Ki(\Delta_i^m, \Lambda^m_{t}(t_{i+1}), \Delta)\right) =  \\
 &=-\frac{1}{2}\sum_{i= -1}^{N_m} \left(\Li(\tilde\Lambda^m_{t}(t_{i}),\tilde\Delta) + \Li(\tilde\Delta_i^m, \tilde\Lambda^m_{t}(t_{i})) -\Li(\tilde\Lambda^m_{t}(t_{i+1}),\tilde\Delta) - \Li(\tilde\Delta_i^m, \tilde\Lambda^m_{t}(t_{i+1})) \right).
\end{align*}

By definition Leray index $\Li(\tilde \Lambda_1,\tilde \Lambda_2)$ is locally constant on the set $\{(\tilde{\Lambda}_1,\tilde{\Lambda}_2): \Lambda_1 \cap \Lambda_2 = \{0\} \}$. Since $\Lambda^m_{t}(t_{i})$ and $\Lambda^m_{t}(t_{i+1})$ can be connected by a curve that does not pass through $\Delta_i^m$, we obtain by Lemma~\ref{lemm:leray}
$$
 \Li(\tilde \Delta_i^m, \tilde\Lambda^m_{t}(t_{i})) - \Li(\tilde\Delta_i^m, \tilde\Lambda^m_{t}(t_{i+1})) = 0.
$$
This way we get
$$
\ind_\Delta \Lambda^m_{t}(s) = \frac{1}{2}\left(\Li(\tilde{\Lambda}^m_{t}(t+1),\tilde{\Delta}) - \Li(\tilde{\Lambda}_{-1},\tilde{\Delta})  \right).
$$

\emph{Step 5.} Assume for now that $\tilde{\Lambda}^m(s)$ converges pointwise to a curve $\tilde{\cL}_s$. The previous formula then implies the final result. Indeed, take $t=T$. We put $\Delta = \Pi$ and choose any Lagrangian plane $\Delta'$, s.t. $\Pi\cap \Delta' = \{0\}$ and $\cL_{T+1} \cap \Delta' = \{0\}$. Then by point-wise convergence we also have that $\Lambda^m(T+1) \cap \Delta' = \{0\}$ for $m\in \N$ sufficiently big. Then we obtain from the properties of the Leray index
\begin{align*}
&\ind^- (\Hess (E_{N_0,T},\nu J_T)[\tilde{\omega},\lambda(T)]) + n - \dim \left( \bigcap_{s=0}^{T} \cL_{s}\cap \Pi \right) \\
= &\frac{1}{2}\left( \Li(\tilde{\Lambda}^m(T+1),\tilde{\Pi}) - \Li(\tilde{\Lambda}^m(-1),\tilde{\Pi})\right) = \frac{1}{2}\left( \Li(\tilde{\Lambda}^m(T+1),\tilde{\Delta'})- \right. \\  - & \left. \Li(\tilde{\Lambda}^m(-1),\tilde{\Delta}') + \Ki(\Lambda^m(T+1),\Delta',\Pi) - \Ki(\Lambda^m(-1),\Delta',\Pi) \right).
\end{align*}
By construction $\Lambda^m(T+1) = \Lambda^m(-1) = \Pi$. Therefore the Kashiwara indexes in the expression are zero and we can take limit as $m\to\infty$, since the Leray index is locally constant. Thus we obtain
$$
\ind^- (\Hess (E_{N_0,T},\nu J_T)[\tilde{\omega},\lambda(T)])  = \frac{1}{2}\left( \Li(\tilde{\cL}(T+1),\tilde{\Delta}') - \Li(\tilde{\cL}(-1),\tilde{\Delta'}) \right)- n + \dim \left( \bigcap_{s=0}^{T} \cL_{s}\cap \Pi \right).
$$
Finally we repeat the argument to replace $\Delta'$ back with $\Pi$. We see that the result indeed holds if the pointwise convergence is true. 

\emph{Step 6.} Let us prove the point-wise convergence. Fix a moment of time $t \in [0,T]$. To prove that the sequence $\Lambda^m(t)$ converges point-wise, we are going fix a special Lagrangian plane $\Delta$, s.t. it does not intersect any $\Lambda^m(t)$ or $\cL_t$ and moreover
\begin{equation}
\label{eq:what_we_want}
\Li(\tilde \Lambda^m_{t}(t+1),\tilde \Delta) = \Li(\tilde \Lambda^m(t),\tilde \Delta).
\end{equation}
Then by Lemma~\ref{lem:maslov_ind}, formula (\ref{eq:index}) and steps 3 and 4 we have 
\begin{equation}
\label{eq:one_more_ind}
 \Li(\tilde \Lambda^m(t), \tilde\Delta)  = \Li(\tilde \Lambda_{-1}, \tilde\Delta)- 2\ind^- (\Hess (E_{N_0,t},\nu J_t)[\tilde{\omega},\lambda(t)]) - 2n + 2\dim \left( \bigcap_{s=0}^{t} \cL_{s}\cap \Pi \right)
\end{equation}
for $m$ sufficiently large. Therefore the limit on the left hand side exists. But we recall that by the Theorem~\ref{thm:univer_cover} an open subset in $\{\tilde \Lambda \in \widetilde{L(\Sigma)}\,:\, \Lambda \cap \Delta= \{0\}\}$ can be identified with $\Delta^\pitchfork \times \Z$. And therefore we can take 
$$
\tilde{\cL}_t = \left( \cL_t,\lim_{m\to \infty}\frac{1}{2}\Li(\Lambda^m_t(t),\Delta) \right).
$$

To prove that the Lagrangian plane $\Delta$ with the desired properties exists, we follow until some point the proof of the existence of the $\cL$-derivative. The idea is that we expect that the $\cL$-derivative encodes all the information about the index and the nullity of the Hessian. So we construct a $\cL$-derivative over a finite-dimensional subspace, which contains already the kernel and a negative subspace of maximal dimension. Then adding up variations should not change the $\cL$-derivative to much, at least we can hope that it is not going to produce any contribution to the Maslov index in the process. 

We are going to use the formulas from the definition of a $\cL$-derivative as a linearisation of the Lagrange multiplier rule. We write 
$$
A v = dE_{N_0,t} [\tilde{\omega}](\xi,v).
$$

First of all, we note that directly from the definition, it follows, that variations from $\ker Q \cap \ker A$ do not give any contribution to the $\cL$-derivative. Next we refine our initial partition $D_0$. We assume that $D_0$ is such that the space $V_{D_0}$ is like the subspace $V$ in Lemma~\ref{lem:subspace} with $F = E_{N_0,t}$. Then by Lemma~\ref{lem:ker} for any $D\supset D_0$ one has $\cL_t(V_D)\cap \Pi = \cL_t(V_{D_0})\cap \Pi$. This allows us to search for $\Delta$ in $(\Pi\cap \cL_t(V_{D_0}))^\angle /(\Pi\cap \cL_t(V_{D_0}))$, i.e. we can assume that $\cL_t(V_D) \cap \Pi = \{0\}$. Geometrically this means that we look for $\Delta$ that contains $\cL_t(V_{D_0})\cap \Pi$. Indeed as a result we will get monotone curves that have constant intersection with $\Delta$, so it is going to be enough to replace it with $\Delta^{\Gamma}$, where $\Gamma$ is any isotropic subspace s.t. $\sigma|_{\Gamma\oplus(\Pi\cap \cL_t(V_{D_0}))}$ is symplectic.

We return now to our sequence of partitions $D_m \supset D_0$. All $V^m$ satisfy the assumptions of Theorem~\ref{thm:index_add}. Therefore
$$
\ind_{\Pi}(\Lambda^i(t),\Lambda^j(t)) = 0, \qquad \forall i,j\in \N
$$ 
We note that from the definition of the positive Maslov index it follows, that when $\ind_{\Lambda_1}(\Lambda_2,\Lambda_3)$ is equal to zero or $n$, it implies that $\Lambda_1 \cap \Lambda_2 = \Lambda_1 \cap \Lambda_3 = \{0\}$. So $\Lambda^m(t) \cap \Pi =\{0\}$ for all $m\in \N$.

We choose any $\Delta \in L(\Sigma)$, s.t. $\ind_{\Pi}(\Lambda^0(t),\Delta) = n$. Then by the triangle inequality we get
$$
\ind_{\Pi}(\Lambda^0(t),\Delta) \leq \ind_{\Pi}(\Lambda^0(t),\Lambda^m(t)) + \ind_{\Pi}(\Lambda^m(t),\Delta), \qquad \forall m\in N.
$$
Then $\ind_{\Pi}(\Lambda^m(t),\Delta) = n$. From the relations between the positive Maslov index and the Kashiwara index, we get 
$$
\Ki(\Lambda^m(t), \Pi,\Delta) = 2\ind_{\Pi}(\Lambda^m(t),\Delta) + \dim(\Lambda^m(t) \cap \Delta) - n = n + \dim(\Lambda^m(t) \cap \Delta).
$$
Thus from property 1 of Lemma~\ref{lemm:kashiwara} we obtain
$$
\dim(\Lambda^m(t) \cap \Delta) = 0.
$$
This establishes the existence of $\Delta$. Using the connection between the Kashiwara index and the positive Maslov index together with the antisymmetry of the Kashiwara index, we obtain.
$$
\ind_{\Delta}(\Lambda^m(t),\Pi) = 0.
$$

Therefore all $\Lambda^m(t)$ can be connected to $\Pi$ by a monotone curve that does not intersect $\Delta$. By properties of the Leray index it now follows that (\ref{eq:what_we_want}) holds.

\end{proof}

\section*{Appendices}

\setcounter{section}{0}
\renewcommand{\thesection}{\Alph{section}}

\section{Chronological calculus and second variation}
\label{app:chrono}
In this Appendix we give some basic facts from the chronological calculus and derive a formula for the second variation in the language of symplectic geometry. A more detailed exposition can be found in~\cite{as}.

The idea of the chronological calculus is to reinterpret all geometric objects on a manifold $M$ as linear maps on $C^\infty(M)$. For example, a point $q$ can be seen as a linear operator $\hat{q}: C^\infty(M) \to \R$ defined in a natural way
$$
\hat{q} (a) = a(q), \qquad \forall a \in C^\infty(M).
$$
Similarly one defines an operator analogue of a diffeomorphism $P$:
$$
( \hat q \circ \hat P ) (a) =  P(a(q)), \qquad \forall a \in C^\infty(M), \, \forall q \in M.
$$ 
Here $\hat P : C^\infty(M) \to C^\infty(M)$ is an algebra automorphism, that geometrically is just a change of variables. A vector field $V$ is represented by a differentiation $\hat{V}$ of the algebra $C^\infty(M)$.

In~\cite{as} one can find the proof of the fact, that any algebra homo\-morphism/auto\-mor\-phism/dif\-fe\-ren\-tia\-tion can be represented by a point/dif\-feo\-mor\-phism/vec\-tor field. A one-para\-met\-ric family of these objects can be integrated and differentiated with the usual properties like, for example, the Leibnitz rule.

Consider a non-autonomous vector field $V(t)$ and the corresponding differential equation
$$
\dot{q}(t) = V(t)(q(t))
$$
that can be rewritten in the operator form as
$$
\dot{\hat{q}}(t) = \hat{q}(t) \circ \hat{V}(t).
$$
From here we omit the "hat" in the operator notation, since we will always speak about operators unless it is stated otherwise. If the Cauchy problem for this ODE is well posed, we have a well defined flow $P^t$ that must be a unique solution to the operator equation
\begin{equation}
\label{eq:right}
\dot P^t = P^t \circ V(t).
\end{equation}

A solution to this equation is called the \emph{right chronological exponent} and is denoted by
$$
P^t = \overrightarrow{\exp} \int_0^t V(\tau) d \tau.
$$

Since we know that $P^0 = \id$, we can rewrite equation (\ref{eq:right}) in the integral form
$$
P^t = \id + \int_0^t P^\tau \circ V(\tau) d \tau.
$$
Iterating this expression gives us \emph{the Voltera expansion} for the right chronological exponent
\begin{equation}
\label{eq:voltera}
P^t = \id + \int_0^t V(\tau) d \tau + \int_0^t \int_0^\tau V(\theta) \circ V(\tau) d\theta d\tau  + ...
\end{equation}

The last thing that we need is the variation formulae for the right chronological exponent. Suppose that $V(t), W(t)$ are non-autonomous vector fields and $P^t$ satisfies (\ref{eq:right}). Then the following formulae is true
\begin{align*}
\overrightarrow{\exp}\int_0^t (V(\tau) + W(\tau)) d\tau &= \overrightarrow{\exp}\int_0^t (P^\tau_*)^{-1}W(\tau) d\tau \circ P^t.
\end{align*}
Here $P_*^t$ should be understood as a normal pushforward map, i.e. in the expressions above $(P^t_*)^{-1} W$ should be read as $\widehat{(P^t_*)^{-1} W}$. The proof can be found in the book~\cite{as}.

Now we would like to obtain an explicit expression for the first and second differential of the extended end-point map $\widehat{E}_{N_0,t} = (E_{N_0,t},J_{t})$. 

Using the notations of Section~\ref{sec:optim} by the variation formulae we then find
\begin{align*}
\hat{E}_{N_0,t}(u,\hat{q}(0)) &= \hat q(0) \circ \overrightarrow{\exp}\int_0^t (\hat{P}^\tau_*)^{-1} \left( \hat{f}_{u(\tau)} - \hat{f}_{\tilde{u}(\tau)}\right)d\tau \circ \hat{P}^t =\\
&= \hat q(0) \circ \overrightarrow{\exp}\int_0^t \hat g_{\tau,u(\tau)}d\tau \circ \hat{P}^t.
\end{align*}
Note that $\hat g_{\tau,\tilde{u}(\tau)} \equiv 0$. Using the Voltera expansion~\eqref{eq:voltera} and differentiating w.r.t. to $\hat{q}(0)$ at $(\tilde{q}(0),0) $ and $u(\tau)$ at $\tilde{u}(\tau)$, we obtain for the first variation the following expression
\begin{equation}
\label{eq:first_der}
d\hat{E}_{N_0,t}[\tilde{\omega},\hat{q}(0)](v,\hat{\zeta}) = \hat{P}^t_* \hat{\zeta} + \left( \hat{P}^t_*\int_0^t \left( \left.\frac{\p }{\p u} \right|_{u = \tilde u(\tau) } \hat g_{\tau,u} \right) v(\tau) d\tau \right)(\hat q(t)).
\end{equation}

One can show~\cite{as} that $\lambda(t)$ satisfies the Hamiltonian system 
$$
\dot{\lambda}(t) = \vec{h}(\tilde{u},\lambda(t)), 
$$
where $h$ is the Hamiltonian of PMP defined in Subsection~\ref{sec:optim}. If we restrict the equation (\ref{eq:first_variation}) to $w$ in $L_k^\infty[0,t]$, we obtain
\begin{align*}
0 &= \left \langle \hat \lambda(t), \left( \hat{P}^t_*\int_0^t \left.\frac{\p }{\p u} \right|_{u = \tilde u(\tau) } \hat g_{\tau,u} \cdot v(\tau) d\tau \right)(\hat q(0))\right \rangle = \\
&=\int_0^t \left.\frac{\p }{\p u} \right|_{u = \tilde u(\tau) }\left \langle \hat{\lambda}(t) , \hat{P}^t_* (\hat{P}^\tau_*)^{-1} (\hat{f}_{u} - \hat{f}_{\tilde u(\tau)})  (\hat q(t))  \cdot v(\tau) \right \rangle d\tau = \\
&=\int_0^t \left.\frac{\p h(u,\lambda(t))}{\p u} \right|_{u = \tilde u(\tau) }v(\tau)d\tau.
\end{align*}
Since this equality holds for any $v(t)\in L^\infty_k[0,t]$, we obtain this way the extremality condition~\eqref{eq:first_order}.

Note that since we do not vary the initial value of the functional (i.e.  $J_0 = 0$), we have $\hat{\zeta} = (\zeta,0)\in T_{\tilde{q}(0)}M \times \R$. Thus if we restrict the equation (\ref{eq:first_variation}) to $w\in  T_{\tilde{q}(0)} N_0$, we find
$$
0 = \langle \hat{\lambda}(t), \hat{P}^t_* \hat{\zeta}\rangle = \langle  (\hat{P}^t)^* \hat{\lambda}(t),\hat{\zeta}\rangle  = \langle  \hat{\lambda}(0),\hat{\zeta}\rangle   =\langle \lambda(0),\zeta \rangle.
$$
This way we obtain the transversality conditions~\ref{eq:trans}.

Now we can find an explicit formula for the Hessian, that we use in Subsection~\ref{sec:sympl_morse}. 

\begin{proposition}
The Hessian $\Hess (E_{N_0,t},\nu J_t)[\tilde \omega,\lambda(t)]$ has the following form
\begin{align}
&\Hess (E_{N_0,t},\nu J_t)[\tilde \omega,\lambda(t)]((\zeta_1,v_1),(\zeta_2,v_2)) = \nonumber\\
= &-\int_0^t \sigma \left(\zeta_1 + \int_0^\tau X(\theta) v_1(\theta) d\theta,  X(\tau) v_2(\tau) \right) + b(\tau)(v_1(\tau),v_2(\tau))d\tau. \label{eq:sec_deriv}
\end{align}
\end{proposition}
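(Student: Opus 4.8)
The plan is to compute $Q := \langle \hat\lambda(t), d^2\hat E_{N_0,t}[\tilde\omega]\rangle$ explicitly, since by the remark following \eqref{eql_deriv_def} one has $\Hess(E_{N_0,t},\nu J_t)[\tilde\omega,\lambda(t)] = -Q|_{\ker dE_{N_0,t}}$, and because $\hat E_{N_0,t}=(E_{N_0,t},J_t)$ and $\hat\lambda(t)=(\lambda(t),-\nu)$ give $\langle\hat\lambda(t),d^2\hat E_{N_0,t}\rangle = \langle\lambda(t),d^2E_{N_0,t}\rangle-\nu d^2 J_t = Q$. Thus it suffices to show that $Q$ coincides with the negative of the right-hand side of \eqref{eq:sec_deriv} on $\ker dE_{N_0,t}$.

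First I would differentiate the variation-formula expression $\hat E_{N_0,t}(u,\hat q(0))=\hat q(0)\circ\overrightarrow{\exp}\int_0^t \hat g_{\tau,u(\tau)}\,d\tau\circ\hat P^t$ to second order. Writing $\hat g'_\tau=\partial_u\hat g_{\tau,u}|_{\tilde u}$ and $\hat g''_\tau=\partial^2_u\hat g_{\tau,u}|_{\tilde u}$ and using $\hat g_{\tau,\tilde u(\tau)}\equiv0$, the Volterra expansion \eqref{eq:voltera} shows that the second-order part of the chronological exponential is $\tfrac12\int_0^t \hat g''_\tau(v,v)\,d\tau+\int_0^t\!\!\int_0^\tau (\hat g'_\theta v)\circ(\hat g'_\tau v)\,d\theta\,d\tau$, while varying the initial point contributes the cross term $\hat\zeta\circ\int_0^t\hat g'_\tau v\,d\tau$. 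This exhibits $d^2\hat E_{N_0,t}$ as a pointwise part, an iterated-integral part and an initial-point/control part.

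Next I would pair each piece with $\hat\lambda(t)$, reusing exactly the two manipulations from the first-variation computation in \eqref{eq:first_der}: that $(\hat P^t)^*\hat\lambda(t)=\hat\lambda(0)$, and that pairing $\hat\lambda(t)$ with a pushed-forward field $\hat P^t_*(\hat P^\tau_*)^{-1}(\partial_u\hat f)\,v\cdot\hat q(t)$ reproduces the $u$-derivative of the Hamiltonian along $\lambda(\tau)$. Combined with the bridge identity $\sigma(\eta,X(\tau)w)=\langle d_{\mu(\tau)}\partial_u H\,w,\eta\rangle$ from Section~\ref{sec:optim} and the definitions of $X(\tau)$ and $b(\tau)$, this turns the pointwise term into $\int_0^t b(\tau)(v,v)\,d\tau$, the iterated term into $\int_0^t\!\int_0^\tau \sigma(X(\theta)v,X(\tau)v)\,d\theta\,d\tau=\int_0^t\sigma(\int_0^\tau X(\theta)v\,d\theta,\,X(\tau)v)\,d\tau$, and the cross term into $\int_0^t\sigma(\zeta,X(\tau)v)\,d\tau$; summing gives $Q(v,v)=\int_0^t[\sigma(\zeta+\int_0^\tau Xv,\,X(\tau)v)+b(\tau)(v,v)]\,d\tau$, i.e.\ the negative of the right-hand side of \eqref{eq:sec_deriv}, and polarising yields the bilinear statement.

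The main obstacle is the iterated-integral term: I must verify that pairing the ordered operator product $(\hat g'_\theta v)\circ(\hat g'_\tau v)$ with the adjoint covector $\hat\lambda(t)$ yields precisely the skew-symmetric pairing $\sigma(X(\theta)v,X(\tau)v)$ with the correct normalisation, so that integration over the simplex $\theta<\tau$ assembles the primitive $\int_0^\tau X(\theta)v\,d\theta$ into the first slot of $\sigma$. A secondary point worth recording is symmetry: the expression \eqref{eq:sec_deriv} is not manifestly symmetric in $(\zeta_1,v_1)\leftrightarrow(\zeta_2,v_2)$, its antisymmetric defect being the boundary term $\sigma(\zeta_1+\int_0^t Xv_1,\,\zeta_2+\int_0^t Xv_2)-\sigma(\zeta_1,\zeta_2)$ obtained by integration by parts. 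Both summands vanish on $\ker dE_{N_0,t}$ — the first because there the arguments lie in the Lagrangian plane $\Pi$, the second because the initial-point variations lie in the isotropic factor $T_{\tilde q(0)}N_0$ — so the formula is a consistent symmetric representative of the Hessian on its domain of definition.
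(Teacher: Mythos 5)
Your outline reproduces the paper's strategy (Volterra expansion of the chronological exponential, pairing with $\hat\lambda$, then passing from the hatted to the unhatted symplectic data), but the step you yourself flag as the ``main obstacle'' is a genuine gap, and the claim you make there is false as stated. The ordered composition $(\hat g'_\theta v)\circ(\hat g'_\tau v)$ is a \emph{second-order} differential operator, and pairing a covector with it does not produce the skew pairing: in coordinates $\langle\lambda,\hat q_0\circ V\circ W\rangle=\langle\lambda,(DW)V\rangle$, which is neither antisymmetric nor intrinsic. The identity that actually links chronological data to the symplectic form is $\langle\lambda,[V,W]\rangle=\sigma(\vec V,\vec W)$ for the Hamiltonian lifts, i.e.\ only the \emph{commutator} (the antisymmetric part) pairs to $\sigma$; the symmetric part of your simplex integral survives and does not pair to zero. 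In particular, the bridge identity $\sigma(\eta,X(\tau)w)=\langle d_{\mu(\tau)}\partial_u H\,w,\eta\rangle$ cannot deliver your assertion that ``integration over the simplex assembles the primitive into the first slot of $\sigma$'': it concerns only first-order (linearized) data.

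The paper closes exactly this hole with an operator-level integration by parts: exchanging the order of integration in one summand of the symmetrized double integral in \eqref{eq:second_var_g} and adding and subtracting $\int_0^t\hat g'(\tau)v_2\,d\tau\circ\int_0^t\hat g'(\tau)v_1\,d\tau$ rewrites the simplex term as the commutator $\int_0^t\bigl[\int_0^\tau\hat g'(\theta)v_1\,d\theta,\,\hat g'(\tau)v_2\bigr]\,d\tau$ --- which legitimately pairs with $\hat\lambda(0)$ to give $\int_0^t\hat\sigma\bigl(\int_0^\tau\hat X(\theta)v_1\,d\theta,\,\hat X(\tau)v_2\bigr)\,d\tau$ --- plus a leftover genuine second-order product. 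That leftover does not vanish by itself: it cancels against the cross term $\int_0^t d_{\hat q}\hat g'(\tau)\,\hat\zeta_2\,v_1\,d\tau$ precisely on $\ker dE_{N_0,t}$, where $\zeta_2=-\int_0^t g'(\tau)v_2(\tau)\,d\tau$, together with the observation that the horizontal part of $\hat g'$ is independent of the $y$-variable. Your bookkeeping keeps only the $\zeta_1$ cross term and silently discards both the $\zeta_2$ cross term and the symmetric product --- exactly the two pieces whose mutual cancellation is the nontrivial content of the proof --- so the derivation does not close, even though your final formula is correct. Your concluding symmetry observation (that the antisymmetric defect of \eqref{eq:sec_deriv} vanishes on the kernel) is a valid consistency check but is a different issue and does not repair this step; to fix the proof, carry out the commutator manipulation and the kernel-based cancellation explicitly, and also record the reduction $\hat\sigma=\sigma-d\nu\wedge dy$ together with the vanishing of the $\nu$-component of $\hat X$, which you currently use only implicitly.
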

Note that there is no $\zeta_2$ due to the fact that $(\zeta,v)\in \ker dE_{N_0,T}$ are not independent.

\begin{proof}

We introduce a map $\hat G_{N_0,t} = (\hat P^t)^{-1} \hat{E}_{N_0,t}$. Then we can write  equivalently
$$
Q(v,w) = \langle \hat \lambda(t), d^2\hat{E}_{N_0,t}[\tilde{\omega} ](v,w) \rangle = \langle \hat \lambda(0), d^2\hat{G}_{N_0,t}[\tilde{\omega} ](v,w) \rangle.
$$
To simplify the notations we define
$$
g'(\tau) = \left.\frac{\p}{\p u}\right|_{u = \tilde{u}(\tau)} g_{\tau,u},
$$
and similarly the ``hatted'' $\hat{g}'_{\tau}$ for the extended system. We also define the ``hatted'' version of $X(t)$, which is defined in the same way, using the same Hamiltonian $h(u,\lambda)$, but viewed as a Hamiltonian on $T^*M \times \R^2$ and the corresponding extended Hamiltonian flow $\hat{\Phi}_t$. We note that the projection of $\hat{X}(t)$ to $T(T^*M)$ is exactly $X(t)$ and that the projections of $\hat{X}(t)$ and $X(t)$ to $T M \times \R$ and $TM$ are $\hat{g}'(t)$ and $g'(t)$ for all $t \in [0,T]$. Another important point is that the standard symplectic form $\hat \sigma$ on the extended phase space is equal to
$$
\hat \sigma = \sigma - d\nu \wedge dy,
$$
where $\sigma$ is the standard symplectic form on $T^* M$.

Using the Volterra expansion once more we obtain an explicit formula
\begin{align}
&d^2\hat G_t[\tilde \omega]((\hat \zeta_1,v_1),(\hat \zeta_2,v_2)) = \nonumber \\
= &\int_0^t d_{\hat q} \hat g'(\tau) (\hat \zeta_1 v_2(\tau) +  \hat \zeta_2v_1(\tau))d\tau +  \int_0^t \hat g''(\tau) (v_1(\tau),v_2(\tau)) d\tau + \label{eq:second_var_g}\\
+&\int_0^t \left( \int_0^\tau \hat g'(\theta) v_1(\theta) d\theta \circ \hat g'(\tau) v_2(\tau) + \int_0^\tau \hat g'(\theta) v_2(\theta) d\theta \circ \hat g'(\tau) v_1(\tau) \right) d\tau.\nonumber
\end{align}
By exchanging the order of integration we have
$$
\int_0^t \left(\int_0^\tau \hat g'(\theta) v_2(\theta) d\theta \circ \hat g'(\tau) v_1(\tau) \right) d\tau = \int_0^t \left( \hat g'(\tau) v_2(\tau)  \circ  \int_\tau^t \hat g'(\theta) v_1(\theta)d\theta \right) d\tau.
$$
By adding and subtracting
$$
\int_0^t \hat g'(\tau) v_2(\tau)d\tau \circ \int_0^t \hat g'(\tau) v_1(\tau)d\tau 
$$
we find
\begin{align*}
d^2\hat G_t[\tilde \omega]((\hat \zeta_1,v_1),(\hat \zeta_2,v_2)) &= \int_0^t d_{\hat q} \hat g'(\tau) \zeta_1 v_2(\tau) d\tau + \int_0^t \hat g''(\tau) (v_1(\tau),v_2(\tau)) d\tau + \\
&+ \int_0^t d_{\hat q} \hat g'(\tau)  \hat\zeta_2 v_1(\tau) d\tau + \int_0^t \hat g'(\tau) v_2(\tau)d\tau \circ \int_0^t \hat g'(\tau) v_1(\tau)d\tau + \\
&+\int_0^t \left[ \int_0^\tau \hat g'(\theta) v_1(\theta) d\theta , \hat g'(\tau) v_2(\tau)\right] d\tau .
\end{align*}

We need to reinterpret each summand in terms of sympelctic geometry. One can check (or see~\cite{as}), that 
$$
\int_0^t \langle \hat \lambda(0), \hat g''(\tau)(v_1(\tau),v_2(\tau)) \rangle d\tau = \int_0^t b(\tau)(v_1(\tau),v_2(\tau))d\tau
$$
and 
$$
\left \langle \hat \lambda(0),\int_0^t \left[ \int_0^\tau \hat g'(\theta) v_1(\theta) d\theta , \hat g'(\tau) v_2(\tau)\right]d\tau \right \rangle = \int_0^t \hat \sigma\left( \int_0^\tau \hat X(\theta) v_1(\theta) d\theta , \hat X(\tau) v_2(\tau)\right)d\tau.
$$

To give an interpretation to the first term let us choose Darboux coordinates in $T_{\hat \lambda(0)}(T^*M)$ subordinate to the Lagrangian splitting $T_{\hat \lambda(0)}(T^*(M\times \R))=T_{(\tilde{q}(0),0)}(M\times \R) \times T_{(\tilde{q}(0),0)}^*(M\times \R)$. We note that in this case $-d_{\hat q} \hat g'(\tau) v_2(\tau) \cdot$ can be associated with a covector that is nothing but the projection of $\hat X(\tau) v_2(\tau)$ to the fibre. Therefore we have
$$
\int_0^t d_{\hat q} \hat g'(\tau) \zeta_1 v_2(\tau) d\tau = \int_0^t \hat{\sigma}(\hat \zeta_1, \hat{X}(\tau) v_2(\tau)) d\tau.
$$

Finally for the term in the middle, we use the fact that we are restricting to the kernel of  $dE_{N_0,t}$. On it we have
$$
\zeta_2 = - \int_0^t g'(\tau) v_2(\tau) d\tau.
$$
Since $\hat \zeta_2 = (\zeta_2 ,0)$, we find that
$$
\int_0^t d_{\hat q} \hat g'(\tau) \zeta_1 v_2(\tau) d\tau = \int_0^t g'(\tau) v_2(\tau)d\tau \circ \int_0^t \hat g'(\tau) v_1(\tau)d\tau 
$$
We can write $\hat g'(\tau) = g'(\tau) + (g^0)'(\tau) \p_y $, then 
\begin{align*}
&\int_0^t d_{\hat q} \hat g'(\tau)  \hat\zeta_2 v_1(\tau) d\tau + \int_0^t \hat g'(\tau) v_2(\tau)d\tau \circ \int_0^t \hat g'(\tau) v_1(\tau)d\tau = \\
 = &\int_0^t (g^0)'(\tau)  v_2(\tau)d\tau \,\p_y \circ \int_0^t \hat g'(\tau) v_1(\tau)d\tau,
\end{align*}
but as we have seen the horizontal part is independent of the $y$ variable. So this terms is just zero.

Collecting everything we find an explicit formula for the Hessian
\begin{align}
&\Hess (E_{N_0,t},\nu J_t)[\omega,\lambda(t)]((\zeta_1,v_1),(\zeta_2,v_2)) = \nonumber\\
= &-\int_0^t \hat \sigma \left(\hat \zeta_1 + \int_0^\tau \hat X(\theta) v_1(\theta) d\theta, \hat X(\tau) v_2(\tau) \right) + b(\tau)(v_1(\tau),v_2(\tau))d\tau. \nonumber
\end{align}
This can be simplified even further, if we note that the $\nu$ component of $\hat X(\tau)$ is equal to zero, which can be easily seen from the definitions. Therefore from the explicit form of $\hat{\sigma}$ we derive that
$$
\hat \sigma(\hat X(\theta), \hat X(\tau)) = \sigma(X(\theta), X(\tau)) 
$$
and
$$
\hat \sigma((\zeta_1,0), \hat{X}(\tau)) = \sigma(\zeta_1,X(\tau)),
$$
which proves the proposition.

\end{proof}

\section{Proofs of auxiliary lemmas and Theorem~\ref{thm:main}}

\label{app:existence}

In this appendix we prove some lemmas needed in Subsection~\ref{sec:sympl_morse}. 

We begin with proof of Lemma~\ref{lem:important} and Lemma~\ref{lem:orthogonal}, that we have used several times in the text.

\begin{proof}[Proof of Lemma~\ref{lem:important}]
Assume first that $Q$ is non-degenerate and that $Q|_V$ is non-degenerate as well. Then the statement is reduced to
\begin{equation}
\label{eq:orth1}
\ind^+Q = \ind^+Q|_{V} + \ind^+Q|_{V^\perp},
\end{equation}
which follows from the standard fact from linear algebra that $V\oplus V^\perp = \R^N$~\cite{quadrat}.

Next assume that $Q$ is non-degenerate, but $Q|_{V}$ has a non-trivial kernel. Since under the non-degeneracy assumption $(V^\perp)^\perp = V$, we have 
$$
\ker Q|_V = \ker Q_{V^\perp} = V\cap V^\perp.
$$
Consider to complements $V_1,V_2 \subset V$ which satisfy
\begin{align*}
V_1 \oplus V\cap V^\perp &= V;\\
V_2 \oplus V\cap V^\perp &= V^\perp.
\end{align*}
and form a subspace $U=V_1 \oplus V_2$. Applying formula~\eqref{eq:orth1} to $U$ gives us
$$
\ind^+Q = \ind^+Q|_{U} + \ind^+Q|_{U^\perp}.
$$
By construction
$$
\ind^+Q|_{U} = \ind^+Q|_{V_1} + \ind^+Q|_{V_2} = \ind^+Q|_{V} + \ind^+Q|_{V^\perp}
$$
so it only remains to show $\ind^+Q|_{U^\perp} = \dim V \cap V^\perp$. By non-degeneracy assumption $\dim V + \dim V^\perp = n$. Thus a dimensional count shows
$$
\dim U^\perp = n - \dim V_1 \oplus V_2 = 2 \dim V \cap V^\perp.
$$
But $V \cap V^\perp \subset U^\perp$ is by definition an isotropic subspace inside $U^\perp$. A dimension of a maximal isotropic subspace can not exceed the half of the dimension of the whole space and one has the equality if, and only if, the positive and the negative inertia index of a given quadratic form are equal. Hence
$$
\ind^+ Q|_{U^\perp} = \frac{\dim U^\perp}{2} = \dim V \cap V^\perp.
$$
Thus we obtain
\begin{equation}
\label{eq:orth2}
\ind^+Q = \ind^+Q|_{V} + \ind^+Q|_{V^\perp} + \dim V \cap V^\perp.
\end{equation}

Let us prove the general case. Choose $U\subset \R^N$ such that
$$
U \oplus \ker Q = \R^N.
$$
Then according to the formula~\eqref{eq:orth2} 
$$
\ind^+ Q|_U = \ind^+Q|_{U \cap V} + \ind^+Q|_{(U \cap V)^\perp} + \dim (U \cap V) \cap (U \cap V)^\perp.
$$
where the orthogonal complement is taken with respect to $Q|_U$. Clearly $\ind^+Q|_{U \cap V} = \ind^+Q|_{V}$. For the next term note first that $(U \cap V)^\perp = U \cap V^\perp$. Then as in the first term we obtain $\ind^+Q|_{U \cap V^\perp} = \ind^+Q|_{V^\perp}$. Finally in the last term we have
$$
\dim(U \cap V) \cap (U \cap V)^\perp = \dim U \cap V \cap V^\perp. 
$$
On other side 
$$
\dim (V \cap V^\perp) = \dim (U \cap V \cap V^\perp) + \dim (\ker Q \cap V \cap V^\perp).  
$$
Since $\ker Q \cap V \subset V^\perp$, we have $\ker Q \cap V \cap V^\perp = \ker Q \cap V$. This finishes the proof of Lemma~\ref{lem:important}.

\end{proof}

\begin{proof}[Proof of Lemma~\ref{lem:orthogonal}]
We denote by $W$ the subspace consisting of $v\in V_1^N$ as defined in the statement. By restricting~\eqref{eq:usual} to $w\in V_1^N$, we can easily see that $W \subset (V_1^N)^\perp_Q $. 

To prove the other inclusion we identify the annihilator of $N^\perp$ with the orthogonal compliment of $N$ in $\R^n$. Let us then take a compliment of $A^{-1}(N)$ in $V_2$ which will be isomorphic to $\im A \cap N^\perp $ under $A$ and a basis $e_i$ in this subspace such that the images $Ae_i$ form an orthonormal basis of $\im A \cap N^\perp $. Then if $v \in (V_1^N)^\perp_Q$, by identifying $(\R^n)^*$ with $\R^n$ using the Euclidean inner product, we find that~\eqref{eq:usual} is satisfied if we take
$$
\tilde \xi = -\sum_{i=1}^d \frac{Q(v,e_i)}{|Ae_i|^2}Ae_i,
$$
where $d = \dim (\im A \cap N^\perp)$. Thus $W \supset (V_1^N)^\perp_Q $.

The rest of the statement is proved using exactly the same argument.
\end{proof}

\begin{lemma}
\label{lem:ker}
Let $F: \cU \to M$ be a smooth map from a finite dimensional manifold $\cU$ to a finite dimensional manifold $M$, $J: \cU \to R$ be a smooth functional, and let $(u,\lambda)$ be a Lagrange point of $(F,J)$. Then to any vector in $\cL (F,\nu J)[u,\lambda] \cap \Pi$ we can associate a unique up to an element of $\ker Q \cap \ker dF[u]$ variation $v\in \ker \Hess (F,\nu J)[u,\lambda]$. Consequently
$$
\dim\left(  \ker \Hess (F,\nu J)[u,\lambda] \right) - \dim \left( \ker Q \cap \ker dF[u]  \right)= \dim \left( \cL (F,\nu J)[u,\lambda] \cap \Pi \right).
$$
\end{lemma}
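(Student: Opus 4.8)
The plan is to realise both $\cL(F,\nu J)[u,\lambda]\cap\Pi$ and $\ker\Hess(F,\nu J)[u,\lambda]$ as images of a single finite-dimensional solution space under two projections, and then compare dimensions. Since $\Pi\subset T_\lambda(T^*M)$ is the vertical subspace, a vector $(\xi,dF[u](v))$ of the $\cL$-derivative lies in $\Pi$ precisely when $dF[u](v)=0$. I would therefore set
$$
\cS=\bigl\{(\xi,v)\in T^*_qM\times\ker dF[u]\ :\ \langle\xi,dF[u](w)\rangle+Q(v,w)=0\ \ \forall w\in T_u\cU\bigr\},
$$
which is cut out of a finite-dimensional space by the linearised Lagrange multiplier equation~\eqref{eql_deriv_def}, and consider the two linear projections $\pi_\xi\colon(\xi,v)\mapsto(\xi,0)$ and $\pi_v\colon(\xi,v)\mapsto v$. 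By the definition of the $\cL$-derivative, $\pi_\xi$ maps $\cS$ onto $\cL(F,\nu J)[u,\lambda]\cap\Pi$.

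Next I would prove the qualitative ``association'' statement. If $(\xi,v)\in\cS$, restricting the defining equation to $w\in\ker dF[u]$ annihilates the first term, leaving $Q(v,w)=0$ for all such $w$; since $\Hess=-Q|_{\ker dF[u]}$ this says exactly $v\in\ker\Hess$. Thus $\pi_v$ takes values in $\ker\Hess$, and I would get surjectivity of $\pi_v$ onto $\ker\Hess$ for free from Lemma~\ref{lem:orthogonal} applied with $V_1=V_2=T_u\cU$, $A=dF[u]$, $N=\{0\}$: that lemma identifies the $Q$-radical of $\ker dF[u]$ with the set of $v$ admitting a covector $\xi$ solving the full equation, which is precisely $\pi_v(\cS)$. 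Finally, if $(\xi,v_1),(\xi,v_2)\in\cS$, then $(0,v_1-v_2)\in\cS$ by linearity, and there the covector term drops out, forcing $Q(v_1-v_2,\cdot)\equiv0$, i.e. $v_1-v_2\in\ker Q\cap\ker dF[u]$. Hence the variation $v$ attached to a given element of $\cL\cap\Pi$ is well defined modulo $\ker Q\cap\ker dF[u]$ and lies in $\ker\Hess$, as claimed.

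For the numerical identity I would compute $\dim\cS$ along each projection. The kernel of $\pi_\xi$ is $\{(0,v)\in\cS\}$, where the equation reduces to $Q(v,\cdot)\equiv0$, so $\ker\pi_\xi\cong\ker Q\cap\ker dF[u]$ and
$$
\dim\bigl(\cL\cap\Pi\bigr)=\dim\cS-\dim\bigl(\ker Q\cap\ker dF[u]\bigr).
$$
The kernel of $\pi_v$ is $\{(\xi,0)\in\cS\}$, where the equation becomes $\langle\xi,dF[u](w)\rangle=0$ for all $w$, i.e. $\xi\in\ann(\im dF[u])$, so $\dim\cS=\dim\ker\Hess+\dim\ann(\im dF[u])$. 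Combining the two yields
$$
\dim\bigl(\cL\cap\Pi\bigr)=\dim\ker\Hess-\dim\bigl(\ker Q\cap\ker dF[u]\bigr)+\bigl(\dim M-\rank dF[u]\bigr),
$$
and the asserted formula is what remains after discarding the last summand.

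The main obstacle is exactly that last term $\dim\ann(\im dF[u])=\dim M-\rank dF[u]$: it records the residual freedom in the Lagrange covector $\xi$ and is what can make the correspondence $\cL\cap\Pi\to\ker\Hess/(\ker Q\cap\ker dF[u])$ non-injective. It vanishes if and only if $dF[u]$ is onto, i.e. $F$ is a submersion at $u$. In the situations where the lemma is used this is automatic: the underlying map is the evaluation map $F_t$, which is a submersion, and in the coordinates of Remark~\ref{rem:coordinates} one even has $dF[u]$ surjective with $d^2F[u]=0$. I would therefore state this surjectivity as the standing hypothesis and use it to drop the term, noting that without it the nullity formula must be corrected by $\codim\im dF[u]$.
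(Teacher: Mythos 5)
Your argument is correct, and at its core it runs along the same lines as the paper's own proof: one direction by restricting the defining equation to $w\in\ker dF[u]$, the other by solving for the covector $\xi$ (the paper phrases your Lemma~\ref{lem:orthogonal} step as ``$Q(v,\cdot)$ is a linear combination of rows of $dF[u]$''), and well-definedness modulo $\ker Q\cap\ker dF[u]$ by subtracting two solutions. What you add --- and this is a genuine catch --- is the bookkeeping through the solution space $\cS$ and its two projections. The paper's proof asserts that the correspondence is ``a bijection'', but it only ever establishes surjectivity onto $\ker\Hess(F,\nu J)[u,\lambda]$ modulo $\ker Q\cap\ker dF[u]$; injectivity fails exactly by $\ker\pi_v\cong\ann\left(\im dF[u]\right)$, since for any $\xi$ annihilating $\im dF[u]$ the pair $(\xi,0)$ solves the defining equation and contributes an element of $\cL(F,\nu J)[u,\lambda]\cap\Pi$ attached to the zero variation. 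Your corrected identity
$$
\dim\left(\cL(F,\nu J)[u,\lambda]\cap\Pi\right)=\dim\ker\Hess(F,\nu J)[u,\lambda]-\dim\left(\ker Q\cap\ker dF[u]\right)+\codim\im dF[u]
$$
is the true statement: a constant map $F$ makes the discrepancy maximal, since there $\cL(F,\nu J)[u,\lambda]=\Pi$ so the right-hand side of the paper's formula equals $n$, while $\ker\Hess=\ker Q\cap\ker dF[u]=\ker d^2J[u]$ makes its left-hand side zero. So the lemma as printed is valid only under the surjectivity hypothesis you propose, and your proof of the corrected version is complete.

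One caveat on your closing remark: surjectivity is \emph{not} automatic in all of the paper's applications. The lemma is invoked for $F=E_{N_0,t}$ (through Lemma~\ref{lem:subspace} and inside Theorem~\ref{thm:index_add}), and along an abnormal extremal the Lagrange condition with $\nu=0$ forces $\lambda\in\ann\left(\im dE_{N_0,t}[\tilde{\omega}]\right)\setminus\{0\}$, so that $dF$ is not onto precisely in the abnormal case the paper wants to cover; moreover the lemma is applied to restrictions of $F$ to finite-dimensional subspaces $V$ of variations, where the relevant correction is $\dim M-\rank dF[u]|_V$. What saves the downstream arguments is that the lemma only ever enters through comparisons of two spaces $V_1\subset V_2$ satisfying $\rank dF[u]|_{V_1}=\rank dF[u]|_{V_2}$, so your correction term is identical on both sides and cancels in the differences used in Theorems~\ref{thm:index_add} and~\ref{thm:morse_apr}. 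The right fix is therefore your corrected formula with the rank-dependent term, rather than a blanket submersion hypothesis.
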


\begin{proof}
The uniqueness part is proved easily. So we can assume that $\ker Q \cap \ker dF[u] = \{0\}$ by factoring out this intersection if necessary. 

If $(\xi,0) \in \cL (F,\nu J)[u,\lambda] \cap \Pi$, then by definition there must exist $v\in \ker dF[u]$, that solves the $\cL$-derivative equation
\begin{equation}
\label{eq:lemm_proof_ker}
\langle\xi, dF[u](w)\rangle + Q(v,w)  = 0, \qquad \forall w\in T_u U.
\end{equation}
Restricting $w$ to $\ker dF[u]$ shows that 
$$\dim\left(  \ker \Hess (F,\nu J)[u,\lambda] \right) \geq \dim \left( \cL (F,\nu J)[u,\lambda] \cap \Pi \right).
$$

Now. Let us assume that $v\in \ker \Hess (F,\nu J)[u,\lambda]$. Then
$$
Q(v,w) = 0, \qquad \forall w \in \ker dF[u].
$$
But this implies that $Q(v,\cdot)$ is linear combination of rows of $dF[u]$. Therefore there must exist $\xi$, s.t. \eqref{eq:lemm_proof_ker} holds.
\end{proof}

\begin{lemma}
\label{lem:subspace}
Let $(u,\lambda)$ be a Lagrangian point of $(F,J): \cU \to M\times \R$. If $\ind^+ Q|_{\ker dF[u]} < \infty$, then there exists a finite-dimensional subspace $V \subset T_u \cU$, s.t. for any $W\supset V$ one has:
\begin{enumerate}
\item $\rank dF[u]|_W = \rank dF[u]|_{V}$,
\item $\ind^+ Q|_W = \ind^+ Q|_V$,
\item $v\in \ker Q|_{W^0}$ and $v \notin V$ iff $v \in \ker Q \cap \ker dF[u]$.
\end{enumerate}
\end{lemma}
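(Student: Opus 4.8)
The plan is to build $V$ out of three finite-dimensional ingredients, each responsible for one of the three conclusions, and then to check that enlarging $V$ to any $W$ can only add ``degenerate'' directions. First I would record two finiteness facts. Since $M$ is finite-dimensional, $dF[u]$ has finite rank $r=\rank dF[u]$ and $\ker dF[u]$ has codimension $r$ in $T_u\cU$; combined with $\ind^+ Q|_{\ker dF[u]}=:p<\infty$ this forces $\ind^+Q<\infty$ on all of $T_u\cU$, since any $Q$-positive subspace meets $\ker dF[u]$ in dimension $\le p$ and hence has dimension $\le p+r$. Writing $H_0:=\ker dF[u]$ and representing the continuous form $Q|_{H_0}$ by a bounded self-adjoint operator (this is the setting of the remark following Lemma~\ref{lem:important}), I would fix the $Q$-orthogonal spectral splitting $H_0=H_+\oplus R_0\oplus H_-$, where $H_+$ is a maximal positive subspace ($\dim H_+=p$), $R_0=\ker(Q|_{H_0})$ is the radical of the restriction, and $Q$ is negative definite on $H_-$.

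Next I would construct $V$. Choose $f_1,\dots,f_r$ with $dF[u](f_i)$ a basis of $\im dF[u]$; choose a maximal $Q$-positive subspace $N^+$ of the whole space containing $H_+$ (extend $H_+$, using $\ind^+Q<\infty$); and choose a finite complement $C$ of $\ker Q\cap H_0$ inside $R_0$, which is finite-dimensional by the key estimate below. Set $V=\spn\{f_1,\dots,f_r\}+N^++C$. Properties 1 and 2 are then immediate: for any $W\supseteq V$ one has $\rank dF[u]|_W=r$ because the full rank is already achieved on $\{f_i\}$ and cannot exceed $r$; and $\ind^+Q|_V=\ind^+Q|_W=\ind^+Q$ because $N^+\subseteq V\subseteq W$ already realizes the global positive index.

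The heart of the argument, and the step I expect to be the main obstacle, is property 3, i.e.\ controlling how the radical of the restricted form $Q|_{W^0}$, with $W^0:=W\cap\ker dF[u]$, grows as $W$ enlarges. The crucial estimate is that $\ker Q\cap H_0\subseteq R_0$ and $\dim\big(R_0/(\ker Q\cap H_0)\big)\le r$: if $v\in R_0$ then $w\mapsto Q(v,w)$ is a functional vanishing on $H_0=\ker dF[u]$, hence a combination of the $r$ rows of $dF[u]$, so $v\mapsto Q(v,\cdot)$ maps $R_0$ into an $r$-dimensional space with kernel exactly $\ker Q\cap H_0$. This makes $C$ finite-dimensional and gives $R_0\subseteq V+(\ker Q\cap\ker dF[u])$. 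It then remains to prove $\ker(Q|_{W^0})\subseteq R_0$: any $v\in\ker(Q|_{W^0})$ is $Q$-isotropic and $Q$-orthogonal to $W^0\supseteq H_+$, so in the splitting $v=v_++v_0+v_-$ the orthogonality to $H_+$ together with positive-definiteness of $Q|_{H_+}$ kills $v_+$, after which isotropy and negative-definiteness of $Q|_{H_-}$ kill $v_-$, leaving $v=v_0\in R_0$. Hence every element of $\ker(Q|_{W^0})$ lies in $V+(\ker Q\cap\ker dF[u])$, which is the substance of property 3 (the genuinely new kernel directions beyond $V$ are radical directions $\ker Q\cap\ker dF[u]$), while the reverse inclusion $\ker Q\cap\ker dF[u]\subseteq\ker(Q|_{W^0})$ is trivial. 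The delicate point to get right is exactly this two-step squeeze: a priori $\ker(Q|_{W^0})$ could contain non-radical isotropic vectors, and it is only the presence of the maximal positive subspace $H_+$ inside $V$ that excludes them.
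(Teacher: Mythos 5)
Your proposal is correct and takes essentially the same route as the paper: the paper's proof is exactly the one-line recipe (declared ``obvious'') of taking $V$ to be a direct sum of a subspace mapped isomorphically onto $\im dF[u]$, a maximal positive subspace of $Q$, and a complement of $\ker Q\cap\ker dF[u]$ inside the radical $\ker Q|_{\ker dF[u]}$, which is precisely your $\spn\{f_1,\dots,f_r\}+N^{+}+C$. Your additional verifications --- finiteness of the global positive index, the bound $\dim\bigl(R_0/(\ker Q\cap\ker dF[u])\bigr)\le r$ obtained by factoring $Q(v,\cdot)$ through $dF[u]$, and the two-step squeeze showing $\ker (Q|_{W^0})\subseteq R_0$ --- merely supply the details the paper leaves to the reader.
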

The proof is obvious. We just need to take a direct sum of a subspace isomorphic to $\im dF[u]$, a maximal positive subspace and a subspace complementary to $\ker Q \cap \ker dF[u]$.

Using these lemmas we can now give the proof of the following Morse-type result that appeared first in~\cite{agr_lderiv}. 
\begin{theorem}
\label{thm:index_add}
Let $F: \cU \to M$ be a map from a possibly infinite dimensional smooth manifold $\cU$ to a finite dimensional manifold $M$, whose restrictions to finite-dimensional subspaces are smooth, and let $J: \cU \to \R$, $(u,\lambda)$ be a Lagrange point of $(F,J)$. Let $V_1 \subset V_2$ two finite dimensional subspaces of $T_u \cU$ and denote $V_i^0 = V_i \cap \ker dF[u]$. If we choose $V_1,V_2$ to be such that $\rank dF[u]|_{V_1} = \rank dF[u]|_{V_2}$ and $\ker \Hess (F,\nu J)[u,\lambda]|_{V_1^0} = \ker \Hess (F,\nu J)[u,\lambda]|_{V_2^0}$, then
\begin{align}
&\ind^- \Hess (F,\nu J)[u,\lambda]|_{V_2^0} - \ind^- \Hess (F,\nu J)[u,\lambda]|_{V_1^0}  \geq \nonumber \\
\geq &\ind_\Pi\left( \cL (F,\nu J)[u,\lambda](V_1),\cL (F,\nu J)[u,\lambda](V_2) \right). \label{eq:ineq_theorem_long_name}
\end{align}
\end{theorem}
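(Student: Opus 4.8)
The plan is to reduce everything to the positive index of the quadratic form $Q = \langle\lambda, d^2 F[u]\rangle - \nu d^2 J[u]$, since $\Hess(F,\nu J)[u,\lambda] = -Q|_{\ker dF[u]}$ and hence $\ind^-\Hess|_{V_i^0} = \ind^+ Q|_{V_i^0}$. Write $\Lambda_i = \cL(F,\nu J)[u,\lambda](V_i)$. First I would apply the splitting formula of Lemma~\ref{lem:important} with ambient space $V_2^0$, subspace $V_1^0$, and form $Q|_{V_2^0}$, which gives
$$\ind^+ Q|_{V_2^0} - \ind^+ Q|_{V_1^0} = \ind^+ Q|_{(V_1^0)^\perp} + \dim\!\big(V_1^0\cap (V_1^0)^\perp\big) - \dim\!\big(V_1^0\cap\ker Q|_{V_2^0}\big),$$
where $(V_1^0)^\perp$ is the $Q$-orthogonal complement of $V_1^0$ inside $V_2^0$. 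Here $V_1^0\cap (V_1^0)^\perp = \ker\Hess|_{V_1^0}$ by definition of the orthogonal complement, while $\ker Q|_{V_2^0}=\ker\Hess|_{V_2^0}$; the second hypothesis identifies the latter with $\ker\Hess|_{V_1^0}\subseteq V_1^0$, so that $V_1^0\cap\ker Q|_{V_2^0}=\ker\Hess|_{V_1^0}$ as well. The two correction terms therefore cancel, leaving the clean identity $\ind^+ Q|_{V_2^0} - \ind^+ Q|_{V_1^0} = \ind^+ Q|_{(V_1^0)^\perp}$.

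Next I would show that the two hypotheses force the kernel of the Maslov form to be trivial, so that $\ind_\Pi(\Lambda_1,\Lambda_2)=\ind^+ q$. Concretely, I claim $\Lambda_1\cap\Pi = \Lambda_2\cap\Pi = \Lambda_1\cap\Lambda_2\cap\Pi$. For $\Lambda_2\cap\Pi\subseteq\Lambda_1$: an element $(\xi,0)\in\Lambda_2\cap\Pi$ comes from a witness $v\in\ker\Hess|_{V_2^0}$ by Lemma~\ref{lem:ker}, and by hypothesis this $v$ already lies in $V_1^0$, so restricting its defining equation from $V_2$ to $V_1$ exhibits $(\xi,0)\in\Lambda_1$. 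For $\Lambda_1\cap\Pi\subseteq\Lambda_2$: given $(\xi,0)\in\Lambda_1\cap\Pi$ with witness $v\in\ker\Hess|_{V_1^0}=\ker\Hess|_{V_2^0}$, the rank hypothesis lets me write any $w\in V_2$ as $w_1+w_0$ with $w_1\in V_1$ and $w_0\in V_2^0$; then $\langle\xi,dF[u](w)\rangle + Q(v,w)$ splits into the $V_1$-equation (zero) plus $Q(v,w_0)$ (zero because $v\in\ker\Hess|_{V_2^0}$), so the same $v$ witnesses $(\xi,0)\in\Lambda_2$. Hence $\dim\ker q=0$ and $\ind_\Pi(\Lambda_1,\Lambda_2)=\ind^+ q+\tfrac12\dim\ker q = \ind^+ q$.

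With these two reductions the statement collapses to the single inequality $\ind^+ Q|_{(V_1^0)^\perp}\ge \ind^+ q$, which is the heart of the argument and mirrors Step 2 of Theorem~\ref{thm:morse_apr}. I would first use Lemma~\ref{lem:orthogonal} (with $A=dF[u]$ and $N=\{0\}$) to describe $(V_1^0)^\perp$ as exactly those $v\in V_2^0$ admitting a covector $\xi$ solving the $\cL$-derivative equation $\langle\xi,dF[u](w)\rangle+Q(v,w)=0$ for all $w\in V_1$. Then, starting from a decomposition $\mu=\mu_1+\mu_2\in(\Lambda_1+\Lambda_2)\cap\Pi$ with $\mu_i=(\xi_i,dF[u](v_i))\in\Lambda_i$, I would assign the variation $v=v_1+v_2\in V_2^0$ (with witnessing covector $\xi_1+\xi_2$) and verify, using that $\Lambda_1,\Lambda_2$ are Lagrangian and the explicit Hessian formula of Appendix~\ref{app:chrono}, that $Q$ restricted to the resulting subspace agrees with the Maslov form $q$. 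This produces an injection of a maximal positive subspace of $q$ into $(V_1^0)^\perp$ on which $Q$ stays positive definite, giving $\ind^+ Q|_{(V_1^0)^\perp}\ge\ind^+ q$, and the theorem follows by combining the three steps.

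The main obstacle will be this last identification: choosing the representatives $(v_1,v_2)$ so that the cross-terms in $Q(v,v)$ reduce precisely to $\sigma(\mu_1,\mu_2)=q(\mu)$, which is the genuine symplectic-linear-algebra computation. I expect only an inequality (rather than the equality obtained in Theorem~\ref{thm:morse_apr}) to survive, because the reverse bound there relied on the explicit unique solvability supplied by the algorithm of Theorem~\ref{thm:algorithm}, which is unavailable for two arbitrary nested subspaces; here one can embed the positive cone of $q$ but cannot argue that it exhausts the positive cone of $Q|_{(V_1^0)^\perp}$.
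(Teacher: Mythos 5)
Your proposal is correct and follows essentially the same route as the paper's proof: the splitting of Lemma~\ref{lem:important} with the two correction terms cancelling under the kernel hypothesis, the reduction $\ind_\Pi(\Lambda_1,\Lambda_2)=\ind^+ q$ via $\Lambda_1\cap\Pi=\Lambda_2\cap\Pi$, and the key step of realizing $q$ as the restriction of $Q$ to a subspace of $(V_1^0)^\perp$ through Lemma~\ref{lem:orthogonal}, with $\xi_1+\xi_2$ witnessing $v_1+v_2\in(V_1^0)^\perp$ and $Q(v_1+v_2,v_1+v_2)=-\langle\xi,dF[u](v_1)\rangle=q(\mu)$. The only cosmetic deviations are that you re-derive the intersection equality directly from the rank and kernel hypotheses (where the paper invokes Lemma~\ref{lem:ker}) and that the final identification needs only the defining equations of the $\cL$-derivatives, not the explicit Hessian formula of Appendix~\ref{app:chrono}, which would in any case be unavailable at the level of generality of this theorem.
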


\begin{proof}
From the assumption on the kernels we have by Lemma~\ref{lem:ker}, that
$$
\cL (F,\nu J)[u,\lambda](V_1) \cap \Pi = \cL (F,\nu J)[u,\lambda](V_2) \cap \Pi,
$$
and by Lemma~\ref{lem:important} that 
$$
\ind^+ Q |_{V_2^0} - \ind^+ Q|_{V_1^0} = \ind^+ Q_{(V_1^0)^\perp},
$$ 
where we recall that $Q|_{\ker dF[u]}= - \Hess(F,\nu J)[u,\lambda]$. 

Let us write down explicitly the quadratic form $q$ from the definition of the positive index. Suppose that $(\xi_i,dF[u](v_i))\subset \cL (F,\nu J)[u,\lambda](V_i) $. The quadratic form $q$ is defined on the intersection of $\Pi$ and the sum of $\cL (F,\nu J)[u,\lambda](V_i) $. Thus we define $\xi = \xi_1 + \xi_2$ and assume that $dF[u](v_1+v_2) = 0$. Then we get
$$
q(\xi) = \langle \xi_1, dF[u](v_2)\rangle - \langle\xi_2, dF[u](v_1)\rangle = -\langle\xi_2, d F[u](v_1)\rangle  - \langle\xi_1, d F[u](v_1)\rangle= -\langle\xi, dF[u](v_1)\rangle.
$$

On the other side, Lemma~\ref{lem:orthogonal} tells us that $v\in(V_1^0)^\perp$ if, and only if, there exists $\xi$ in the annihilator of $\in dF[u]$, such that
$$
\langle \xi, dF[u](w) \rangle + Q(v,w) = 0, \qquad \forall w\in V_1.
$$
Then from the definitions of $\cL (F,\nu J)[u,\lambda](V_i) $ it follows that $v = v_1 + v_2$ lies in $(V_1^0)^\perp$ and with covector $\xi = \xi_1 + \xi_2$. So it only remains to check that $Q(v,v)$ is the same as $q(\xi)$. And indeed, we have
$$
Q(v,v) = Q(v_2,v_1+v_2) + Q(v_1+v_2,v_1) = -\langle\xi_2, dF[u](v_1+v_2)\rangle - \langle\xi, dF[u](v_1)\rangle = - \langle\xi, dF[u](v_1)\rangle,
$$
since $v \in \ker dF[u]$ by construction.
\end{proof}

Now we are ready to prove Theorem~\ref{thm:main}. First we show the existence part, since we already have all the necessary results. 

\begin{proof}[Proof of the first part of Theorem~\ref{thm:main}]
For this prove we shorten the notation $\cL (F,\nu J)[u,\lambda](V)$ to $\cL(V)$. Recall that we want to prove existence of the generalised limit that defines the $\cL$-derivative. We will prove that Lagrangian subspaces $\cL(V)$ for different subspaces of variations $V$ all stay in a single coordinate chart and in this chart are represented by symmetric matrices $S_V$ such that $S_V \leq S_W$ whenever $V \subset W$. Thus up to a slight change of the coordinate chart we obtain a bounded monotone increasing generalised sequence and hence there must be a limit.

 Let us take three finite-dimensional subspaces $V \subset V_1 \subset V_2$, where $V$ is like in Lemma~\ref{lem:subspace}. Then as a consequence we have
$$
\cL(V_i) \cap \Pi = \cL(V) \cap \Pi.
$$
Therefore we can assume that $\cL(V_i) \cap \Pi = \{0\}$ by considering $\Sigma = (\cL(V) \cap \Pi)^\angle /(\cL(V) \cap \Pi)$. By the assumptions on $V$ and Theorem~\ref{thm:index_add} we also find that
$$
\ind_\Pi\left( \cL(V_1),\cL(V_2) \right) = 0.
$$

Take any $\Delta \in L(\Sigma)$, s.t. $\ind_\Pi(\cL(V),\Delta) = \dim \Sigma /2$. Then by the triangle inequality and the antisymmetry of the Kashiwara index we have
$$
\ind_{\Pi}(\cL(V_i),\Delta) \geq \ind_{\Pi}(\cL(V),\Delta) - \ind_\Pi(\cL(V),\cL(V_i)) = \frac{\dim \Sigma}{2}.
$$
Therefore $\ind_{\Pi}(\cL(V_i),\Delta) =  \dim \Sigma /2$ and $\cL(V_i) \pitchfork \Delta$.

So we consider a coordinate chart $\Delta^\pitchfork$ centered at $\Pi$. In this chart $\cL(V_i)$ are identified with symmetric operators $S_i: \Pi \to \Delta$. Then for the quadratic form in the definition of $\ind_{\Pi}(\cL(V_i),\Delta)$ we find that
$$
q(p) = \sigma((p,S_ip),(0,-S_i p)) = -p^T S_ip. 
$$
Therefore $S_i$ must be negative-definite symmetric matrices.

We want to show that $\ind_{\Delta}(\cL(V_1),\cL(V_2)) = 0$. Then using the same reasoning we find that $S_2 \geq S_1$, whenever $V_2 \supset V_1$. Thus we obtain a monotone and bounded generalized sequence of symmetric finite-dimensional matrices that must have a limit. In order to compute $\ind_{\Delta}(\cL(V_1),\cL(V_2))$ we use property 4 from Lemma~\ref{lemm:kashiwara} and the cocycle identity. We have that
$$
\Ki(\cL(V_i),\Pi,\Delta) = 2\ind_\Pi(\cL(V_i),\Delta) + \dim(\cL(V_i)\cap \Delta) - \frac{\dim \Sigma}{2} = \frac{\dim \Sigma}{2}.
$$
Then by the cocycle identity we have
\begin{align*}
\Ki(\cL(V_1),\Delta,\cL(V_2)) &= \Ki(\cL(V_1),\Pi,\cL(V_2)) + \Ki(\Delta,\cL(V_2),\Pi) + \Ki(\cL(V_1),\Delta,\Pi) = \\
&=\Ki(\cL(V_1),\Pi,\cL(V_2)).
\end{align*}
Therefore we find that
\begin{align*}
2\ind_\Delta(\cL(V_1),\cL(V_2)) &= \frac{\dim \Sigma}{2} - \dim(\cL(V_1)\cap \cL(V_2)) + \Ki(\cL(V_1),\Delta,\cL(V_2)) =\\
&= \frac{\dim \Sigma}{2} - \dim(\cL(V_1)\cap \cL(V_2)) + \Ki(\cL(V_1),\Pi,\cL(V_2))  =\\
&= 2\ind_\Pi(\cL(V_1),\cL(V_2)) = 0.
\end{align*}
\end{proof}

In order to prove the second part of Theorem~\ref{thm:main}, one can try to show that if $U_0 \subset T_u \cU$ is a dense subset, then
$$
\ind_\Delta (\cL(F,\nu J)[u,\lambda](U_0),\cL(F,\nu J)[u,\lambda]) = 0,
$$
for all $\Delta$ in a dense subset of $L(\Sigma)$. In order to do this we must define $\cL$-derivatives of maps similar to the $\cL$-derivatives of constrained variational problems. Namely if $F:\cU \to M$ is a smooth map, then $(u,\lambda)$ is a Lagrangian point if
$$
\langle \lambda, dF[u](w) \rangle = 0, \qquad \forall w\in T_u \cU
$$
and $\cL$-derivative of this map at $(u,\lambda)$ that we denote by $\cL F[u,\lambda](V)$ consists of vectors $(\xi,dF[u](v))$ which satisfy
$$
\langle \xi, dF[w]\rangle + \langle \lambda, d^2 F[u](v,w)\rangle = 0, \qquad \forall w\in T_u \cU
$$
just like in Definition~\ref{def:l-deriv}. One can easily check that $(u,\lambda)$ is a Lagrange point of the constrained variational problem $(F,J)$ iff $(u,(\lambda,-\nu))$ is a Lagrange point of the extended map $\hat{F} = (F,J)$. Moreover one can easily reconstruct $\cL(F,\nu J)[u,\lambda](V)$ from $\cL \hat F[u,(\lambda,-\nu)](V)$. So we just prove uniqueness of the latter one. To do this we need a lemma similar to Theorem~\ref{thm:index_add}.

\begin{lemma}
\label{lem:maslov_add_2}
Let $F:\cU \to M$ be a smooth map, $a: M \to \R$ be a smooth function and $(u,\lambda)$ a Lagrange point of $F$, s.t. $\lambda = da$. We define a Lagrangian subspace $\Pi_a(\lambda) = T_\lambda(da)$ and the Hessian of $F$ at $u$ to be $Hess(a\circ F)[u] = -d^2(a\circ F)[u]$. Let $V_1\subset V_2$ be two finite-dimensional subspaces, s.t. $\rank dF[u]|_{V_1} = \rank dF[u]|_{V_2}$ and $\ker \Hess(a\circ F)[u]|_{V_1} = \ker \Hess(a\circ F)[u]|_{V_2}$. Then
$$
\ind^- \Hess(a \circ F)[u]|_{V_2} - \ind^- \Hess(a \circ F)[u]|_{V_1} \geq \ind_{\Pi_a(\lambda)}(\cL F[u,\lambda](V_1),\cL F[u,\lambda](V_2)) 
$$
\end{lemma}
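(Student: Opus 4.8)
The plan is to follow the proof of Theorem~\ref{thm:index_add} almost verbatim, with the reference Lagrangian plane $\Pi$ replaced by $\Pi_a(\lambda)$ and the constrained Hessian replaced by the honest Hessian of the function $a\circ F$ at its critical point $u$. First I would record the relevant chain rule: since $\lambda = da$ and $(u,\lambda)$ is a Lagrange point of $F$, the point $u$ is a critical point of $a\circ F$, so $\tilde Q := d^2(a\circ F)[u] = -\Hess(a\circ F)[u]$ is a well-defined quadratic form on all of $T_u\cU$, and in Darboux coordinates adapted to $\Pi$,
\[
\tilde Q(v,w) = \langle\lambda, d^2F[u](v,w)\rangle + \langle S\,dF[u]v,\, dF[u]w\rangle,
\]
where $\Pi_a(\lambda)$ is the graph of the symmetric operator $S$ (the coordinate Hessian of $a$ at $F(u)$). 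Writing $Q_F(v,w) = \langle\lambda, d^2F[u](v,w)\rangle$ for the form defining the $\cL$-derivative, an element $(\xi, dF[u]v)$ of $\cL F[u,\lambda](V)$ satisfies $\langle\xi, dF[u]w\rangle = -Q_F(v,w)$ for all $w\in V$.

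The first substantive step is the $\Pi_a$-analogue of Lemma~\ref{lem:ker}. A vector $(\xi, dF[u]v)\in\cL F[u,\lambda](V)$ lies in $\Pi_a(\lambda)$ exactly when $\xi = S\,dF[u]v$; substituting this into the defining equation and invoking the chain rule shows that this happens iff $\tilde Q(v,w)=0$ for all $w\in V$, i.e. iff $v\in\ker(\tilde Q|_V)$. Hence the hypothesis $\ker\Hess(a\circ F)[u]|_{V_1} = \ker\Hess(a\circ F)[u]|_{V_2}$ gives $\cL F[u,\lambda](V_1)\cap\Pi_a(\lambda) = \cL F[u,\lambda](V_2)\cap\Pi_a(\lambda)$. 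With this in hand, Lemma~\ref{lem:important} applied to $\tilde Q$ on $V_2$ with subspace $V_1$ yields $\ind^-\Hess(a\circ F)[u]|_{V_2} - \ind^-\Hess(a\circ F)[u]|_{V_1} = \ind^+\tilde Q|_{(V_1)^\perp}$, where $(V_1)^\perp$ is the $\tilde Q$-orthogonal complement of $V_1$ in $V_2$; the correction terms in Lemma~\ref{lem:important} cancel precisely because of the equality of kernels.

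It then remains to compare $\ind^+\tilde Q|_{(V_1)^\perp}$ with $\ind_{\Pi_a(\lambda)}(\cL F[u,\lambda](V_1), \cL F[u,\lambda](V_2))$. Mirroring Theorem~\ref{thm:index_add}, I would take $\lambda_i = (\xi_i, dF[u]v_i)\in\cL F[u,\lambda](V_i)$ with $\lambda_1 + \lambda_2\in\Pi_a(\lambda)$ and set $v = v_1 + v_2$. Using the two defining equations together with the membership $\xi_1 + \xi_2 = S\,dF[u](v_1+v_2)$ and the symmetry of $S$, a short computation shows on one hand that $\tilde Q(v,w) = 0$ for all $w\in V_1$, so that $v\in(V_1)^\perp$ (the analogue of Lemma~\ref{lem:orthogonal}), and on the other hand that the Kashiwara form $q(\lambda_1+\lambda_2) = \sigma(\lambda_1,\lambda_2)$ equals $\tilde Q(v,v)$. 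As in Theorem~\ref{thm:index_add}, this identification embeds $q$ into $\tilde Q|_{(V_1)^\perp}$, giving $\ind^+\tilde Q|_{(V_1)^\perp}\geq\ind^+ q + \tfrac12\dim\ker q = \ind_{\Pi_a(\lambda)}(\cL F[u,\lambda](V_1),\cL F[u,\lambda](V_2))$, which is the claim.

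The heart of the matter, and the only place where this argument really differs from Theorem~\ref{thm:index_add}, is the reconciliation of the two quadratic forms: the $\cL$-derivative is built from $Q_F = \langle\lambda, d^2F\rangle$, whereas the index being computed is that of $\tilde Q = d^2(a\circ F)$, which carries the extra term $\langle S\,dF\cdot, dF\cdot\rangle$. The role of the reference plane $\Pi_a(\lambda)$, being the graph of $S$, is exactly to absorb this extra term: both the kernel compatibility in the second step and the identity $q = \tilde Q(v,v)$ in the third hinge on it, and each uses the symmetry of $S$ (equivalently, the fact that $\Pi_a(\lambda)$ is Lagrangian) in an essential way. A clean way to see why $\Pi_a(\lambda)$ is the correct reference is that the shear $(\xi,x)\mapsto(\xi - Sx, x)$ is a symplectomorphism fixing $\Pi$ that simultaneously carries $\cL F[u,\lambda](V)$ to the $\cL$-derivative attached to the form $\tilde Q$ and carries $\Pi_a(\lambda)$ to the horizontal plane; this exhibits the extra term as a mere change of symplectic frame and reduces the bilinear identities to elementary computations. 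The one bookkeeping subtlety, exactly as in Theorem~\ref{thm:index_add}, is that $v$ is determined by $(\xi, dF[u]v)$ only modulo $\ker dF[u]$, so one must check that the assignment $\lambda_1+\lambda_2\mapsto v$ is well defined modulo $\ker\tilde Q\cap\ker dF[u]$ before reading off the inequality.
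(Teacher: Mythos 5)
Your proposal is correct and follows essentially the same route as the paper's own proof: you identify $\cL F[u,\lambda](V)\cap\Pi_a(\lambda)$ with $\ker \Hess(a\circ F)[u]|_V$ via $\xi = d^2a[F(u)]\circ dF[u](v)$, reduce through Lemma~\ref{lem:important} (with the correction terms cancelling by the kernel hypothesis) to $\ind^+ d^2(a\circ F)[u]|_{(V_1)^\perp}$, and realize the index form $q$ as a restriction of $d^2(a\circ F)[u]$ on that complement, exactly as in the paper. Your closing observation that the shear $(\xi,x)\mapsto(\xi - Sx,\,x)$ simultaneously straightens $\Pi_a(\lambda)$ and converts $\langle\lambda,d^2F\rangle$ into $d^2(a\circ F)$ is a nice conceptual addition but does not alter the argument.
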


\begin{proof}
Although similar to Theorem~\ref{thm:index_add}, the proof of this result is somewhat easier. First of all, like in Lemma~\ref{lem:ker} variations $v \in \ker \Hess(a\circ F)[u]|_{V}$ are in one to one corrispondence with $\cL(F)[u,\lambda](V) \cap \Pi_a(\lambda)$. Indeed, if $v \in \ker \Hess(a\circ F)[u]|_{V}$, then
$$
0 = d^2(a\circ F)(v,w) = d^2 a[F(u)](dF[u](v),dF[u](w)) + d a[F(u)]\circ d^2F[u](v,w), \qquad \forall w\in V.
$$
On the other hand if a vector lies in $\cL(F)[u,\lambda](V) \cap \Pi_a(\lambda)$, then it must be of the form $(d^2 a[F(u)]\circ dF[u](v),dF[u](v))$, where $v$ satisfies the same equation as above. Thus from the assumptions on the ranks and kernels we have that 
$$
\ind^- \Hess(a \circ F)[u]|_{V_2} - \ind^- \Hess(a \circ F)[u]|_{V_1} = \ind^- \Hess(a \circ F)[u]|_{(V_1^\perp)} 
$$
and $\cL(F)[u,\lambda](V_1) \cap \Pi_a(\lambda) =\cL(F)[u,\lambda](V_2) \cap \Pi_a(\lambda)$.

Like in the proof of Theorem~\ref{thm:index_add}, it is enough to show that the quadratic form $q$ from the definition of the positive index can be identified with the restriction of minus $\Hess(a\circ F)[u]_{(V_1)^\perp}$ to a smaller subspace. And indeed, let $(\xi_i,dF[u](v_i))\in \cL F[u,\lambda](V_i)$ be such that $\xi = \xi_1 + \xi_2 = d^2a[F(u)]\circ dF[u](v_1+v_2)$. From the definition of $\cL F[u,\lambda](V_i)$ it follows that in this case $v_1 + v_2 \in (V_1)^\perp$. For the form $q$, using the fact that $\lambda = da$, we find that for $\mu \in \Pi_a(\lambda)\cap(\cL F[u,\lambda](V_1) + \cL F[u,\lambda](V_2))$
\begin{align*}
q(\mu) & = \langle \xi_1, dF[u](v_2)\rangle - \langle \xi_2, dF[u](v_1)\rangle = \langle \xi,dF[u](v_2) \rangle - \langle \xi_2,dF[u](v_1+ v_2) \rangle =  \\
&=  d^2 a[F(u)](dF[u](v_1+v_2),dF[u](v_2) + da[F(u)]\circ d^2F[u](v_1 + v_2,v_2) = \\
&=-\Hess(a\circ F)[u](v_1 + v_2,v_2) = -\Hess(a\circ F)[u](v_1 + v_2,v_1 + v_2),
\end{align*}
which ends the proof.
\end{proof}

\begin{proof}[Proof of the second part of Theorem~\ref{thm:main}]
We again employ the short notation $\cL F[u,\lambda](W) = \cL(W)$. Recall that we want to prove that if $U_0\subset U$ is a dense subspace, then $\cL(U_0) = \cL(U)$. Since in this prove $U$ will denote our reference space of variations we simply write $\cL = \cL(U)$.

Let us assume first that $\cL (U_0) \cap \cL  = \{0\}$. Then we can find $\Delta \in \cL(U_0)^\pitchfork \cap \cL^\pitchfork \cap \Pi^\pitchfork$, s.t. $\ind_\Delta(\cL(U_0),\cL) \neq 0$. But then for any two neighborhoods $O_{\cL(U_0)}$ and $O_{\cL}$ there exist subspaces of variations $V_0\subset U_0$ and $V$, s.t. $\cL(V_0) \in O_{\cL(U_0)}$ and $\cL(V_0 + V) \in O_{\cL}$. By Proposition~\ref{prop:homotopy} we have $\ind_{\Delta}(\cL(V_0), \cL(V_0 + V)) = \ind_{\Delta}(\cL(U_0), \cL)  \neq 0$ for sufficiently small $O_{\cL(U_0)}$ and $O_{\cL}$.
 
Since $\Delta \in \Pi^\pitchfork$, there exists a smooth function $a: M \to R$, s.t. $\Delta = T_{\lambda}(da)$. Therefore if we take $V_0$ to be such that $\ind^- \Hess (a\circ F)|_{V_0}$ is maximal (we can always do this by the definition of a generalized sequence), from Lemma~\ref{lem:maslov_add_2} we obtain that $\ind_{\Delta}(\cL(V_0), \cL(V_0 + V)) = 0 $, which gives us a contradiction. Thus $\cL (U_0) \cap \cL  \neq \{0\}$.

So let us assume that $\cL (U_0) \cap \cL  \neq \{0\}$, but $\cL (U_0) \neq \cL$. We can reduce this case to the one we have just considered. Indeed, let us denote $\Gamma = \cL(U_0) \cap \cL$. Then we can consider the symplectic space $\Gamma^\angle/\Gamma$ and $\cL(U_0)/\Gamma, \cL/\Gamma$ can be identified with two Lagrangian subspaces in this symplectic space. Similarly we can identify $\Delta \cap \Gamma^\angle$, $(\cL(V_0) \cap \Gamma^\angle)/(\cL(V_0) \cap\Gamma)$ and $(\cL(V_0+V) \cap \Gamma^\angle)/(\cL( V_0+V) \cap\Gamma)$ with Lagrangian subspaces in $\Gamma^\angle/\Gamma$. But then we have
\begin{align*}
&\ind_{\Delta}(\cL(V_0),\cL(V_0+ V)) \geq \\
\geq &\ind_{\Delta \cap \Gamma^\angle}((\cL(V_0) \cap \Gamma^\angle)/(\cL(V_0) \cap\Gamma),(\cL(V_0+V) \cap \Gamma^\angle)/(\cL( V_0+V) \cap\Gamma)),
\end{align*}
because it corresponds to the restriction of the form $q$ from the definition of the positive Maslov index to a smaller subspace. Thus again by Lemma~\ref{lem:maslov_add_2} the expression on the right must be zero as well, and we arrive to a contradiction. Therefore  $\cL (U_0) = \cL$.
\end{proof}

\bibliographystyle{plain}
\bibliography{references}

\end{document}